\documentclass[10pt]{amsart}

%-----------------------------------------------------%
%%%% Some basic packages
\usepackage{amsmath, amssymb, amsthm, marginnote, ltxcmds, adjustbox, stmaryrd, graphicx}

\usepackage{mathtools, stackengine, changepage, ragged2e}

\usepackage{todonotes}

%-----------------------------------------------------%
%%%% Hyperref
%\usepackage[usenames,dvipsnames]{xcolor}
\definecolor{cite}{HTML}{11871E}
\definecolor{url}{HTML}{698996}
\definecolor{link}{HTML}{912F1B}

\usepackage[pdfencoding=unicode, colorlinks=true, linkcolor=link, citecolor=cite, urlcolor=url, linktocpage]{hyperref}

\usepackage[alphabetic, initials]{amsrefs}

%-----------------------------------------------------%
%%%% Tikz
\usepackage{tikz}
\usetikzlibrary{cd}
\usetikzlibrary{arrows, arrows.meta, positioning, calc}
\tikzcdset{arrow style=tikz, diagrams={>={Straight Barb[scale=0.8]}}}

\tikzstyle{arrow} = [-{Straight Barb[scale=0.8]}, line width=0.2mm]

%-----------------------------------------------------%
%%%% Enumerate
\usepackage{enumitem}
\newenvironment{myenum}[1]{%
\begin{enumerate}[label=#1,topsep=1pt,itemsep=0pt,partopsep=1pt,parsep=1pt]%
}%
{\end{enumerate}%
}

%-----------------------------------------------------%
%%%% Page setup
\usepackage[
	letterpaper,
	twoside=false,
	textheight=22cm,
	textwidth=14cm,
	marginparsep=0.75cm,
	marginparwidth=2.5cm,
	heightrounded,
	centering
]{geometry}
%\frenchspacing

%-----------------------------------------------------%
%%%% Fonts
\usepackage[protrusion=true,expansion=true]{microtype}

\usepackage[bitstream-charter]{mathdesign}
\usepackage[T1]{fontenc}
\usepackage{stmaryrd}

% \usepackage[osf, p]{libertineRoman}
% \usepackage{libertinust1math}
% \usepackage[T1]{fontenc}

% \usepackage[theoremfont, largesc, osf]{newpxtext}
% \usepackage{textcomp}
% \usepackage{newpxmath}
% \linespread{1.05}
% \usepackage[T1]{fontenc}

% \usepackage[
% %	frak=boondox,
% 	cal=euler,
% 	scr=boondoxo,
% 	bb=esstix
% ]{mathalfa}

%\usepackage[scaled=0.98]{biolinum}
\usepackage{biolinum}

\renewcommand{\mathsf}[1]{\text{\normalfont\sffamily#1}}
\newcommand{\mathsfbf}[1]{\text{\normalfont\bf\sffamily#1}}

\DeclareMathAlphabet{\eur}{U}{zeus}{m}{n}
\renewcommand{\mathcal}[1]{\eur{#1}}

%-----------------------------------------------------%
%%%% Setting up cref
\usepackage[capitalise]{cleveref}

\Crefname{prop}{Proposition}{Propositions}
\Crefname{lem}{Lemma}{Lemmas}
\Crefname{cor}{Corollary}{Corollaries}
\Crefname{thm}{Theorem}{Theorems}

\Crefname{defn}{Definition}{Definitions}
\Crefname{notation}{Notation}{Notations}
\Crefname{conj}{Conjecture}{Conjectures}
\Crefname{ass}{Assumption}{Assumptions}
\Crefname{expt}{Expectation}{Expectations}

\Crefname{rmk}{Remark}{Remarks}
\Crefname{question}{Question}{Questions}
\Crefname{expl}{Example}{Examples}

\Crefname{figure}{Figure}{Figures}

\crefformat{equation}{(#2#1#3)}
\crefformat{section}{\S#2#1#3}
\crefmultiformat{section}{\S\S#2#1#3}{and~#2#1#3}{, #2#1#3}{, and~#2#1#3}

%-----------------------------------------------------%
%%%% Theorems
\theoremstyle{plain}
\newtheorem{prop}[subsubsection]{Proposition}
\newtheorem{lem}[subsubsection]{Lemma}
\newtheorem{cor}[subsubsection]{Corollary}
\newtheorem{thm}[subsubsection]{Theorem}
\newtheorem*{thm*}{Theorem}

\theoremstyle{definition}
\newtheorem{defn}[subsubsection]{Definition}

\newtheorem{expt}[subsubsection]{Expectation}

\theoremstyle{remark}
\newtheorem{rmk}[subsubsection]{Remark}
\newtheorem*{rmk*}{Remark}
\newtheorem{expl}[subsubsection]{Example}

\numberwithin{equation}{subsection}

\newcommand{\teq}{\addtocounter{subsubsection}{1}\tag{\thesubsubsection}}

%-----------------------------------------------------%
%%%% Symbols
%%%% Alphabetical order
\newcommand{\aff}{\mathsf{aff}}
\DeclareMathOperator{\Aff}{\mathsf{Aff}}
\newcommand{\aft}{\mathsf{aft}}
\newcommand{\auto}{\mathsf{auto}}

\DeclareMathOperator{\Bun}{\mathsf{Bun}}

\newcommand{\Cech}{\v{C}ech}
\newcommand{\coCech}{co\Cech}

\DeclareMathOperator{\coCechNv}{\mathsf{co\v{C}ech}}
\DeclareMathOperator{\cMap}{\mathcal{M}\mathsf{ap}}
\DeclareMathOperator*{\colim}{\mathsf{colim}}
\newcommand{\conn}{\mathsf{conn}}
\DeclareMathOperator{\Corr}{\mathsf{Corr}}

\newcommand{\defeq}{\overset{\mathsf{def}}{=}}
\newcommand{\dmod}{\mhyph\mathsf{mod}}
\DeclareMathOperator{\Dmod}{\mathsf{D}\dmod}
\DeclareMathOperator{\DGCat}{\mathsf{DGCat}}
\newcommand{\dR}{\mathsf{dR}}

\DeclareMathOperator{\Disk}{\mathsf{Disk}}

\DeclareMathOperator{\Eis}{\mathsf{Eis}}
\DeclareMathOperator{\Emb}{\mathsf{Emb}}
\newcommand{\En}{\mathsf{E}}
\newcommand{\EnAlg}[1]{\En_{#1}\!\mhyph\mathsf{alg}}
\newcommand{\enh}{\mathsf{enh}}
\DeclareMathOperator{\ev}{\mathsf{ev}}

\newcommand{\fin}{\mathsf{fin}}
\DeclareMathOperator{\Fun}{\mathsf{Fun}}

\DeclareMathOperator{\Gr}{\mathsf{Gr}}

\DeclareMathOperator{\Hecke}{\mathsf{H}}
\DeclareMathOperator{\HH}{\mathsfbf{H}}
\DeclareMathOperator{\Ho}{\mathsf{H}}

\DeclareMathOperator{\IndCoh}{\mathsf{IndCoh}}

\DeclareMathOperator{\LL}{\mathbb{L}}
\newcommand{\lfp}{\mathsf{lfp}}
\let\lim\relax
\DeclareMathOperator*{\lim}{\mathsf{lim}}
\DeclareMathOperator{\Loc}{\mathsf{Loc}}
\DeclareMathOperator{\LocSys}{\mathsf{LS}}

\newcommand{\mhyph}{\textsf{-}}

\DeclareMathOperator{\Mnfd}{\mathsf{Mnfd}}

\newcommand{\opp}{\mathsf{op}}

\newcommand{\perf}{\mathsf{perf}}
\DeclareMathOperator{\Perf}{\mathsf{Perf}}
\DeclareMathOperator{\PreStk}{\mathsf{PreStk}}
\newcommand{\pt}{\mathsf{pt}}

\DeclareMathOperator{\QCoh}{\mathsf{QCoh}}

\newcommand{\red}{\mathsf{red}}
\DeclareMathOperator{\Rep}{\mathsf{Rep}}

\DeclareMathOperator{\Shv}{\mathsf{Shv}}
\DeclareMathOperator{\Spc}{\mathsf{Spc}}
\DeclareMathOperator{\Sph}{\mathsf{Sph}}
\DeclareMathOperator{\Stk}{\mathsf{Stk}}
\newcommand{\sfD}{\mathsf{D}}
\newcommand{\spec}{\mathsf{spec}}
\DeclareMathOperator{\Spec}{\mathsf{Spec}}
\DeclareMathOperator{\Sym}{\mathsf{Sym}}

\DeclareMathOperator{\Tot}{\mathsf{Tot}}
\newcommand{\triv}{\mathsf{triv}}

\DeclareMathOperator{\Waki}{\mathsf{Waki}}

\mathchardef\mhyphensymb="2D

%-----------------------------------------------------%
%%%% Constants
\newcommand{\arrdisp}{0.33ex}
\newcommand{\arrdisplacementsp}{0.72ex}

%-----------------------------------------------------%
%%%%%% Custom commands
\newcommand{\ardis}{\ar@<\arrdisp>}
\newcommand{\ardissp}{\ar@<\arrdisplacementsp>}
\newenvironment{descriptionBox}{%
\smallskip%
\begin{adjustwidth}{50pt}{50pt}%
\justify%
\small}{%
\end{adjustwidth}%
\medskip%
}
\newcommand{\interior}[1]{\oset{\circ}{#1}}
\newcommand{\lbar}[1]{\overline{#1}}
\newcommand{\llrrb}[1]{\llbracket#1\rrbracket}
\newcommand{\llrrp}[1]{(\!(#1)\!)}

\newcommand{\oset}[3][0pt]{\mathrel{\ensurestackMath{\stackon[#1]{#3}{\scriptstyle #2}}}}

\newcommand{\wtilde}[1]{\widetilde{#1}}

\title{Eisenstein series via factorization homology of Hecke categories}
\author{Quoc P. Ho}
\address{Department of Mathematics, Hong Kong University of Science and Technology (HKUST), Clear Water Bay, Hong Kong}
\email{phuquocvn@gmail.com}
\author{Penghui Li}
\address{YMSC, Tsinghua University, Beijing, China}
\email{lipenghui@mail.tsinghua.edu.cn}
\date{\today}

\keywords{Betti Langlands program, factorization homology, Betti spectral gluing, Eisenstein series.}
\subjclass[2010]{Primary 14D24, 14F05. Secondary 55N22.}

\begin{document}
\begin{abstract}
Motivated by spectral gluing patterns in the Betti Langlands program, we show that for any reductive group $G$, a parabolic subgroup $P$, and a topological surface $M$, the (enhanced) spectral Eisenstein series category of $M$ is the factorization homology over $M$ of the $\En_2$-Hecke category $\Hecke_{G, P} = \IndCoh(\LocSys_{G, P}(D^2, S^1))$, where $\LocSys_{G, P}(D^2, S^1)$ denotes the moduli stack of $G$-local systems on a disk together with a $P$-reduction on the boundary circle.

More generally, for any pair of stacks $\mathcal{Y}\to \mathcal{Z}$ satisfying some mild conditions and any map between topological spaces $N\to M$, we define $(\mathcal{Y}, \mathcal{Z})^{N, M} = \mathcal{Y}^N \times_{\mathcal{Z}^N} \mathcal{Z}^M$ to be the space of maps from $M$ to $\mathcal{Z}$ along with a lift to $\mathcal{Y}$ of its restriction to $N$. Using the pair of pants construction, we define an $\En_n$-category $\Hecke_n(\mathcal{Y}, \mathcal{Z}) = \IndCoh_0\left(\left((\mathcal{Y}, \mathcal{Z})^{S^{n-1}, D^n}\right)^\wedge_{\mathcal{Y}}\right)$ and compute its factorization homology on any $d$-dimensional manifold $M$ with $d\leq n$,
\[
	\int_M \Hecke_n(\mathcal{Y}, \mathcal{Z}) \simeq \IndCoh_0\left(\left((\mathcal{Y}, \mathcal{Z})^{\partial (M\times D^{n-d}), M}\right)^\wedge_{\mathcal{Y}^M}\right),
\]
where $\IndCoh_0$ is the sheaf theory introduced by Arinkin--Gaitsgory and Beraldo. Our result naturally extends previous known computations of Ben-Zvi--Francis--Nadler and Beraldo.
\end{abstract}

\maketitle

\tableofcontents

\section{Introduction}
\subsection{Motivation}

This paper is motivated by the desire to produce the spectral category of the Betti Langlands program of Ben-Zvi and Nadler~\cite{ben-zvi_betti_2016} by gluing together categories over a disk. Roughly speaking, there are two types of gluing in the Betti Langlands program, which we call spectral Eisenstein gluing and spectral manifold gluing in this paper. The former concerns itself with building up the spectral category from parabolic inductions, a.k.a. Eisenstein series, along all standard parabolic subgroups, whereas the latter builds the spectral category by decomposing the underlying topological surface into simpler pieces.  

The main result of this paper provides the first step in this direction. More specifically, it says that Eisenstein series themselves admit manifold gluing in a strong sense: namely, they can be obtained by integrating (in the sense of factorization homology) the $\En_2$-Hecke categories $\Hecke_{G, P}$, new gadgets defined in this paper, over our topological surface $M$, i.e.
\[
	\int_M \Hecke_{G, P} \simeq \Eis_{G, P}(M),
\]
where $G$ is a reductive group over $\mathbb{C}$ and $P$ a parabolic subgroup.

The definitions of these objects and the precise statement will be given in \cref{subsec:main_results}. In \cref{subsubsec:geo_langlands_Eis_spectral} and \cref{subsubsec:Betti_langlands_spectral} below, we will briefly review the background and context regarding Eisenstein series and how they fit into the Langlands program. The reader who is already familiar with the subject can skip directly to \cref{subsec:main_results}.

\subsubsection{Geometric Langlands program} \label{subsubsec:geo_langlands_Eis_spectral}
Even though this paper is about the Betti Langlands program, for definiteness, we start with the geometric Langlands program since spectral Eisenstein gluing is better documented in the literature for the geometric Langlands program.\footnote{It is expected that the statements and proofs carry over to the Betti setting, at least for compact topological surfaces, i.e. closed surfaces without boundaries.} Let $C$ be any smooth proper algebraic curve and $G$ a reductive group with the Langlands dual $G^\vee$. The latest incarnation of the geometric Langlands program, as pioneered by Arinkin and Gaitsgory~\cite{arinkin_singular_2015} building on previous work of Beilinson and Drinfel'd~\cite{beilinson_quantization_1991}, asserts that we have an equivalence of (DG-)categories
\[
    \IndCoh_\mathcal{N}(\LocSys_G) \simeq \Dmod(\Bun_{G^\vee}). \teq \label{eq:geometric_Langlands}
\]
Here, $\IndCoh_\mathcal{N}(\LocSys_G)$ denotes the category of ind-coherent sheaves with nilpotent singular support on the moduli stack $\LocSys_G$ of \emph{de Rham} $G$-local systems on $C$ and $\Dmod(\Bun_{G^\vee})$ denotes the category of $\mathsf{D}$-modules on the moduli stack $\Bun_{G^\vee}$ of $G$-principal bundles on $C$. The two sides of~\eqref{eq:geometric_Langlands} are usually called the spectral and automorphic sides, respectively.

The equivalence~\eqref{eq:geometric_Langlands} is not merely an abstract equivalence between DG categories. Rather, in a precise sense, it is supposed to be compatible with Hecke operators and Eisenstein series. In fact, it was the compatibility with Eisenstein series which led Arinkin and Gaitsgory to consider $\IndCoh_{\mathcal{N}}(\LocSys_G)$ rather than $\QCoh(\LocSys_G)$ for the spectral side. The latter, a.k.a. the tempered part of the spectral category, is too small to match with the automorphic side.

Let us now comment on one important aspect of $\IndCoh_{\mathcal{N}}(\LocSys_G)$ regarding Eisenstein series. For any parabolic subgroup $P$ of $G$ with Levi subgroup $L$, Eisenstein series on the spectral side is given by pulling and pushing along the following correspondence
\[
    \LocSys_L \leftarrow \LocSys_P \rightarrow \LocSys_G.
\]
The virtue of the full subcategory $\IndCoh_\mathcal{N}(\LocSys_G) \subseteq \IndCoh(\LocSys_G)$ is that it is spanned precisely by the images of all $\QCoh(\LocSys_L)$ under Eisenstein series, where $L$ runs over all standard Levi subgroups, including $L = G$. In fact, since the image of $\QCoh(\LocSys_L)$ in $\QCoh(\LocSys_P)$ under pullback generates the target, $\IndCoh_\mathcal{N}(\LocSys_G) \subseteq \IndCoh(\LocSys_G)$ is, equivalently, generated by the images of $\QCoh(\LocSys_P)$ under pushforward functors.

A stronger statement is true. The spectral Eisenstein gluing conjecture of~\cite{gaitsgory_outline_2015}, now a theorem by~\cites{arinkin_category_2017-1,beraldo_spectral_2020} states that, very roughly, the category $\IndCoh_\mathcal{N}(\LocSys_G)$ could be obtained by gluing together $\QCoh(\LocSys_P)$, i.e. it can be realized as a limit of categories whose terms roughly look like $\QCoh(\LocSys_P)$.\footnote{For this to actually work, one needs to replace $\QCoh(\LocSys_P)$ by the so-called enhanced Eisenstein categories, which we will turn to shortly. See also \cref{rmk:on_the_name_Eis}.} Using this observation,~\cite{gaitsgory_outline_2015} suggests that one might try to prove the Geometric Langlands conjecture (or at least, to produce a functor from the spectral side to the automorphic side) by gluing together functors of the form
\[
    \QCoh(\LocSys_L) \to \Dmod(\Bun_{L^\vee}).
\]
Such a functor has indeed been constructed using, for example, Beilinson's spectral projector~\cite{gaitsgory_outline_2015}*{Theorem 4.5.2}. In the Betti setting, this is also done in~\cite{nadler_spectral_2019}.

\subsubsection{Betti Langlands program} \label{subsubsec:Betti_langlands_spectral}
In what follows, since we work in the topological context, we use $M$ to denote a Riemann surface. The Betti Langlands program of Ben-Zvi and Nadler~\cite{ben-zvi_betti_2016} is a topological analog of the above.  Namely, it asserts that we have an equivalence of categories
\[
    \IndCoh_\mathcal{N}(\LocSys_G) \simeq \Shv_{\mathcal{N}^\vee}(\Bun_{G^\vee}), \teq\label{eq:Betti_Langlands}
\]
which is also compatible with Hecke operators and Eisenstein series. Here (and in the remainder of the paper), $\LocSys_G$ is to be understood as the moduli stack of Betti $G$-local systems on the \emph{topological space} underlying $M$. In addition to spectral Eisenstein gluing, however, the spectral side of the Betti Langlands program affords spectral manifold gluing induced by building up $M$ from more elementary pieces such as cylinders and pairs of pants etc., see~\cite{ben-zvi_betti_2021}. Note, however, that it is not yet known whether it is possible to build the whole spectral category from just categories over a disk.

\begin{rmk} \label{rmk:two_ways_to_glue}
Unfortunately, these two kinds of gluing are both called spectral gluing in the literature. In other words, the papers~\cite{ben-zvi_betti_2021} and~\cite{gaitsgory_outline_2015} use the term spectral gluing to refer to completely different phenomena. To avoid confusion, we will refer to the gluing done by~\cite{ben-zvi_betti_2021} spectral manifold gluing and the one done in~\cites{gaitsgory_outline_2015,arinkin_category_2017-1,beraldo_spectral_2020} spectral Eisenstein gluing.
\end{rmk}

\subsection{The goal of this paper}
With these two ways of gluing available, it is tempting to try to prove the Betti Langlands conjecture by building up from Eisenstein series over a disk. As the first step, it is natural to ask how spectral Eisenstein gluing interacts with spectral manifold gluing.

The goal of this paper is to show that Eisenstein series themselves also admit manifold gluing. More precisely, we show that Eisenstein series for a Riemann surface $M$ can be glued together using topological factorization homology of $M$ with coefficients in the so-called $\En_2$-Hecke categories. This shows, in particular, that unlike the whole spectral category, Eisenstein series themselves can be built up from just a disk.

\subsection{The main results} \label{subsec:main_results}
We will now describe our results more precisely. The main technical tools we use to formulate gluing are the theory of topological factorization homology, as developed by Lurie and Ayala--Francis~\cites{lurie_higher_2017,ayala_factorization_2015} and the hybrid sheaf theory $\IndCoh_0$ (a mixture between $\IndCoh$ and $\QCoh$) appearing in the work of Arinkin--Gaitsgory and Beraldo~\cites{arinkin_category_2017-1,beraldo_spectral_2020}. The reader can find a summary of these theories in \cref{subsec:prelim_IndCoh_0} and \cref{subsec:prelim_fact_hom}.

\subsubsection{Hecke categories and (enhanced) Eisenstein series} \label{subsubsec:intro:Hecke_cat}
For any standard parabolic subgroup $P \subseteq G$ and a map between topological spaces $N \to M$, we will use $\LocSys_{G, P}(M, N)$ to denote the moduli stack of $G$-local systems on $M$ along with a $P$-structure on its pullback to $N$. Similarly, $\LocSys_G(M)$ is the moduli stack of $G$-local systems on $M$.

We define an $\En_2$-category $\Hecke_{G, P}$, i.e. a braided monoidal category, whose underlying category is given by
\[
	\Hecke_{G, P} = \IndCoh(\LocSys_{G, P}(D^2, S^1)),
\]
where $\LocSys_{G, P}(D^2, S^1)$ is the moduli space of of $G$-local systems on a two-dimensional disk along with a $P$-reduction along the boundary circle. The $\En_2$-monoidal structure comes from the pair of pants construction.

\begin{rmk}
The name Hecke is motivated by the same construction but one dimensional lower, i.e. by replacing the pair $S^1 \hookrightarrow D^2$ with $S^0 = * \sqcup * \hookrightarrow D^1$. Indeed, such a construction yields the quasi-coherent Hecke category
\[
	\IndCoh(B\backslash G/B) \simeq \QCoh(B\backslash G/B),
\]
where the tensor product is given by the usual convolution diagram. Replacing quasi-coherent sheaves by $\sfD$-modules or $\ell$-adic sheaves, we obtain the finite Hecke category which plays an important role in the theory of character sheaves and HOMFLY-PT knot homology~\cites{bezrukavnikov_character_2012,ben-zvi_character_2009,webster_geometric_2017,shende_legendrian_2017}.
\end{rmk}

More generally, for any topological surface $M$ with (possibly non-empty) boundary $\partial M$, we define the (enhanced spectral) Eisenstein category
\[
	\Eis_{G, P}(M) \defeq \IndCoh_0(\LocSys_{G, P}(M, \partial M)^\wedge_{\LocSys_P(M)}).
\]
In particular, when $\partial M = \emptyset$,
\[
	\Eis_{G, P}(M) \defeq \IndCoh_0(\LocSys_G(M)^\wedge_{\LocSys_P(M)}).
\]
Note that when $M$ is a two-dimensional disk, we recover $\Hecke_{G, P}$, i.e. $\Eis_{G, P}(D^2) \simeq \Hecke_{G, P}$. See \cref{subsec:prelim_IndCoh_0} for a quick review of the theory $\IndCoh_0$.

\begin{rmk} \label{rmk:boundary}
By definition, topological manifolds appearing in this paper are without boundary in the usual sense. However, when $M$ is non-compact, one can make sense of what it means to take its boundary $\partial M$ by using a compactification $\lbar{M}$ of $M$ and set $\partial M \defeq \partial \lbar{M}$. Of course, when $M$ is already compact, $\partial M = \emptyset$. See \cref{subsubsec:boundary_of_manifolds} for a more detailed discussion. In this paper, the term boundary is strictly used in the sense above, i.e., not the usual sense. In particular, non-compact manifolds have non-empty boundary in this sense.

For example, the two dimensional disk $\mathbb{R}^2$ has no boundary in the usual sense but in our convention, its boundary $\partial \mathbb{R}^2 \simeq S^1$ is a circle. Similarly, $\partial (S^1 \times \mathbb{R}) \simeq S^1 \sqcup S^1$.
\end{rmk}

The following statement is a special, but most interesting, case of our main result.

\begin{thm}[\cref{cor:main_Eis}] \label{thm:intro:main_Eis}
For any topological surface $M$ (with possibly non-empty boundary), we have
\[
	\int_M \Hecke_{G, P} \simeq \Eis_{G, P}(M).
\]
\end{thm}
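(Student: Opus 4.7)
The plan is to derive \cref{thm:intro:main_Eis} as a specialization of the general formula announced in the abstract, taking $n = d = 2$ and $(\mathcal{Y},\mathcal{Z}) = (BP, BG)$. Then $D^{n-d} = \pt$, so $\partial(M \times D^{n-d}) = \partial M$, while $\mathcal{Y}^M = \LocSys_P(M)$ and $(\mathcal{Y},\mathcal{Z})^{\partial M, M} = \LocSys_{G,P}(M, \partial M)$; the right-hand side of the general formula thus becomes precisely $\Eis_{G,P}(M)$. So the task reduces to proving the general formula.

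For the general formula, the strategy is to invoke the axiomatic characterization of factorization homology à la Ayala--Francis and Lurie: viewed as a functor from framed $d$-manifolds to $\DGCat$, the assignment $\int_{(-)} \Hecke_n$ is the unique symmetric-monoidal extension of the underlying $\En_d$-algebra of $\Hecke_n$ (obtained by restricting along $\En_d \to \En_n$) which satisfies $\otimes$-excision. I would define the candidate functor
\[
    F(M) \defeq \IndCoh_0\!\left(\bigl((\mathcal{Y},\mathcal{Z})^{\partial(M\times D^{n-d}),\, M}\bigr)^\wedge_{\mathcal{Y}^M}\right),
\]
and verify that $F$ satisfies the same characterization.

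Three checks are then in order. First, that $F$ is symmetric monoidal under disjoint unions, reducing to the fact that mapping stacks send coproducts to products and that $\IndCoh_0$ sends products of formal completions to tensor products of categories. Second, that $F(D^d) \simeq \Hecke_n$ as $\En_d$-algebras: since $D^d \times D^{n-d} \cong D^n$ and $D^d$ is contractible, $F(D^d)$ collapses to $\IndCoh_0(((\mathcal{Y},\mathcal{Z})^{S^{n-1},D^n})^\wedge_{\mathcal{Y}})$, which is $\Hecke_n$ on the nose; further, the $\En_d$-operad action induced by disk embeddings $\sqcup D^d \hookrightarrow D^d$ is visibly the restriction along $\En_d \to \En_n$ of the pair-of-pants $\En_n$-structure on $\Hecke_n$. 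Third, and most importantly, $F$ must satisfy $\otimes$-excision: for a collar decomposition $M = M_0 \cup_{N\times \mathbb{R}} M_1$, one needs
\[
    F(M) \simeq F(M_0) \otimes_{F(N\times \mathbb{R})} F(M_1).
\]

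The main obstacle will be the $\otimes$-excision check. Geometrically, the decomposition $M = M_0 \cup_{N\times\mathbb{R}} M_1$ does give a fiber-product decomposition of $(\mathcal{Y},\mathcal{Z})^{\partial(M\times D^{n-d}),M}$ over the stack corresponding to $N\times\mathbb{R}$; but promoting this to a tensor-product decomposition of $\IndCoh_0$ of the relevant formal completions is delicate, requiring a Künneth/base-change statement for $\IndCoh_0$ along the loci used in the formal completions. This is precisely where the \emph{hybrid} nature of $\IndCoh_0$ (between $\IndCoh$ and $\QCoh$) pays off, via the $1$-affineness and descent properties of $\IndCoh_0$ established by Arinkin--Gaitsgory and Beraldo, and I would expect the bulk of the paper's technical work to be devoted to setting these up in the generality needed here.
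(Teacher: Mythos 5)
Your high-level plan matches the paper's: prove a general $\otimes$-excision theorem for the candidate functor $F = \Eis_n(\mathcal{Y},\mathcal{Z})$ via the Ayala--Francis characterization, and then specialize $(\mathcal{Y},\mathcal{Z}) = (BP, BG)$ with $n = d = 2$. The three checks you list (symmetric monoidality, value on the disk, $\otimes$-excision) are precisely what the paper carries out, and you correctly identify the K\"unneth-over-$\QCoh$ and descent properties of $\IndCoh_0$ as the engines powering the excision step, which is handled in the paper by a relative bar complex over $\QCoh(\mathcal{Y}^{M_0})$ together with a \Cech{}/co\Cech{} descent argument.

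There is, however, a genuine gap on the left-hand side of your specialization. You only check that the right-hand side becomes $\Eis_{G,P}(M)$. The statement's left-hand side is $\int_M \Hecke_{G,P}$ with $\Hecke_{G,P} \defeq \IndCoh(\LocSys_{G,P}(D^2, S^1))$, whereas your general recipe at $(BP,BG)$ produces $\Hecke_2(BP, BG) \defeq \IndCoh_0\bigl(((BP,BG)^{S^1,D^2})^\wedge_{BP}\bigr)$. These are not identical by definition, and bridging them is a real step: one computes $(BP,BG)^{S^1,D^2} \simeq \mathfrak{n}_P^*[-1]/P$, so that $BP \to (BP,BG)^{S^1,D^2}$ is a nil-isomorphism out of a smooth stack; the formal completion is then the whole stack and $\IndCoh_0$ collapses to $\IndCoh$. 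Without this observation, your ``$F(D^d) \simeq \Hecke_n$ on the nose'' identifies $\int_M \Hecke_2(BP,BG)$ with the right-hand side but does not yet yield the theorem as stated for $\Hecke_{G,P}$. (The paper's ``Hecke pair'' condition, which keeps all intermediate stacks bounded so the simpler version of $\IndCoh_0$ suffices, is a further practical simplification rather than a logical necessity, so its absence from your proposal is not a gap.)
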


\begin{rmk} \label{rmk:on_the_name_Eis}
When $\partial M = \emptyset$, the category $\Eis_{G, P}(M)$ is precisely the topological analog of the categories appearing in the spectral Eisenstein gluing conjecture/theorem which also go under the name \emph{parabolic categories} and under various notations $F_P\dmod(\QCoh(\LocSys_P(M)))$ and $\QCoh(\LocSys_P(M))_{\conn / \LocSys_G(M)}$ in~\cites{gaitsgory_outline_2015,arinkin_singular_2015,beraldo_spectral_2020}. The category
\[
	\Eis_{G, P}(M) = \IndCoh(\LocSys_G(M)^\wedge_{\LocSys_P(M)}) \times_{\IndCoh(\LocSys_P(M))} \QCoh(\LocSys_P(M)) 
\]
is a full subcategory of $\IndCoh_{\mathcal{N}_P}(\LocSys_G(M)^\wedge_{\LocSys_P(M)})$, the source of the enhanced spectral Eisenstein series functor $\Eis_{P,\spec}^{\enh}$~\cite{gaitsgory_outline_2015}*{\S6.5.8}. Thus, the category $\Eis_{G, P}(M)$ can be thought of as the spectral category of enhanced \emph{tempered} Eisenstein series.

We note that the categories $\Eis_{G, P}(M)$ naturally map to the spectral category $\IndCoh_\mathcal{N}(\LocSys_G)$. Moreover, for each $P$, the full subcategory generated by the image of $\Eis_{G, P}(M)$ in the spectral category coincide with the full subcategory generated by the image of the usual Eisenstein functor $\QCoh(\LocSys_L) \to \IndCoh_\mathcal{N}(\LocSys_G)$.
\end{rmk}

\subsubsection{The case of non-compact surfaces}
We saw above that even though $\Hecke_{G, P}$ is defined via $\IndCoh$, $\IndCoh_0$ naturally shows up when we integrate $\Hecke_{G, P}$ over a surface $M$ to obtain $\Eis_{G, P}(M)$. When $M$ has non-empty boundary (see \cref{rmk:boundary}), however, the situation simplifies and we have the following result.

\begin{thm}[\cref{prop:closed_embedding_non-compact_M,thm:Eis_non-compact}]
\label{thm:intro_Eis_non-compact}
Let $M$ be a non-compact manifold. $\LocSys_P(M) \to \LocSys_{G, P}(M, \partial M)$ is a closed embedding. Thus,
\[
	\Eis_{G, P}(M) \simeq \IndCoh(\LocSys_{G, P}(M, \partial M)^\wedge_{\LocSys_P(M)}) \xhookrightarrow{\text{f.f.}} \IndCoh(\LocSys_{G, P}(M, \partial M)),
\]
where f.f. stands for fully faithful. In other words, $\Eis_{G, P}(M)$ is the full subcategory
\[
	\IndCoh_{\LocSys_P(M)}(\LocSys_{G, P}(M, \partial M)) \subseteq \IndCoh(\LocSys_{G, P}(M, \partial M))	
\]
consisting of ind-coherent sheaves set-theoretically supported on $\LocSys_P(M)$.
\end{thm}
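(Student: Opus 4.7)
Proof plan:

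The statement decomposes into two parts: a geometric claim, that $\iota : \LocSys_P(M) \to \LocSys_{G, P}(M, \partial M)$ is a closed embedding, and the identification of $\Eis_{G, P}(M)$ with ind-coherent sheaves on $\LocSys_{G, P}(M, \partial M)$ supported on $\LocSys_P(M)$. The second is a formal consequence of the first.

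For the closed embedding, I would first reinterpret $\iota$ as the comparison map from $\operatorname{Map}(M, BP)$ to the fiber product $\operatorname{Map}(M, BG) \times_{\operatorname{Map}(\partial M, BG)} \operatorname{Map}(\partial M, BP)$, associated to the closed embedding $P \hookrightarrow G$ and the boundary inclusion $\partial M \hookrightarrow M$. Its fiber over a $T$-point $(\mathcal{G}, \mathcal{Q}_\partial)$ of the target is the space of $P$-reductions of $\mathcal{G}$ on $M \times T$ extending $\mathcal{Q}_\partial$, equivalently, the space of sections of the associated $G/P$-bundle over $M$ with prescribed boundary value. Showing $\iota$ is a closed embedding reduces to showing that this space of extensions is a closed subscheme of $T$.

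The non-compactness of $M$ enters exactly here: $M$ admits a deformation retract onto a subcomplex containing $\partial M$ whose relative cells have dimension strictly less than $\dim M$. Choosing such a retract, I would proceed by induction on the attaching of cells relative to $\partial M$. At the base case, the map is controlled purely by $\partial M$ and is trivially a closed embedding. At each inductive step, attachment of a cell of dimension $< \dim M$ contributes a closed-embedding factor, using that $P \hookrightarrow G$ is itself a closed embedding and that at the relevant homotopical scale the sections of the $G/P$-bundle are rigidly determined relative to the new cell boundary. The main obstacle, and what distinguishes the non-compact case from the compact one, is that only non-compactness lets us avoid top-dimensional cells; in their presence the space of sections of a $G/P$-bundle can genuinely have nontrivial moduli, so this step requires care to ensure that the induction terminates with a closed embedding rather than merely some rigidification.

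Granting that $\iota$ is a closed embedding, the second part follows from standard properties of the sheaf theory $\IndCoh_0$ recalled in \cref{subsec:prelim_IndCoh_0}: the formal completion $\LocSys_{G, P}(M, \partial M)^\wedge_{\LocSys_P(M)}$ is a nil-thickening of $\LocSys_P(M)$, so $\IndCoh_0$ of it coincides with $\IndCoh$ of it, and the latter identifies with $\IndCoh_{\LocSys_P(M)}(\LocSys_{G, P}(M, \partial M))$, which embeds fully faithfully into $\IndCoh(\LocSys_{G, P}(M, \partial M))$. This yields the asserted chain of equivalences and fully faithful embedding.
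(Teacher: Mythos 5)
Your high-level plan for the closed-embedding half---reduce to a cell decomposition of $M$ relative to $\partial M$ avoiding top-dimensional cells---is in the right spirit, but the key inductive step is left too vague to accept. You assert that ``at the relevant homotopical scale the sections of the $G/P$-bundle are rigidly determined relative to the new cell boundary,'' but rigidity (a uniqueness statement) is not what is needed, and it is not even true in the generality the paper works in (any closed subgroup $H\subset G$, not only parabolics). What \emph{is} used is that $H\hookrightarrow G$ is a closed embedding and, crucially, that $BH\to BG$ is \emph{separated}, so that a graph map such as $\frac{H^g}{H}\to BH\times_{BG}\frac{G^g}{H}$ is a closed embedding. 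The paper makes this precise by writing a specific pushout presentation of $(\partial M,M)$ into three pieces (faces, skeleton, quotient graph) and checking, piece by piece, that each comparison map is a closed embedding built from $H^g/H\hookrightarrow G^g/H$ and separatedness of $BH/BG$. Your sketch does not identify these inputs and would need them to close the inductive step.

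The second half contains a genuine error. You claim that because the formal completion $\LocSys_{G,P}(M,\partial M)^\wedge_{\LocSys_P(M)}$ is a nil-thickening of $\LocSys_P(M)$, one gets $\IndCoh_0=\IndCoh$. That is not the criterion recorded in \cref{subsec:prelim_IndCoh_0}: every formal completion is a nil-thickening, but $\IndCoh_0$ and $\IndCoh$ of a formal completion differ in general. The relevant fact (\cref{subsubsec:IndCoh_0_special_niliso}) is that $\IndCoh_0(\mathcal Z^\wedge_{\mathcal Y})\simeq\IndCoh(\mathcal Z^\wedge_{\mathcal Y})$ whenever the \emph{base} $\mathcal Y$ is smooth, because then $\QCoh(\mathcal Y)\simeq\IndCoh(\mathcal Y)$. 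So you need the additional input that $\LocSys_P(M)\simeq BP^M$ is smooth, which holds precisely because $M$ non-compact has the homotopy type of a $1$-dimensional CW complex (\cref{lem:BG_cMap_smooth_CW_dim1}). Without invoking smoothness of the base---and hence without using non-compactness a second time---the passage from $\IndCoh_0$ to $\IndCoh$ does not go through.
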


\begin{expl}
\label{expl:intro_cylinder}
Plugging $M=S^1 \times \mathbb{R}$ to the theorem above, we obtain the trace of our $\En_2$-Hecke category $\Hecke_{G, P}$:
\[
	\int_{S^1} \Hecke_{G,P} \simeq \Eis_{G,P}(S^1 \times \mathbb{R}) \simeq \IndCoh_{P/P}(P/P \times_{G/G} P/P),
\]
where $\IndCoh_{P/P}(P/P \times_{G/G} P/P)$ denotes the full subcategory of $\IndCoh(P/P \times_{G/G} P/P)$ consisting of ind-coherent sheaves with set-theoretic support on $P/P$. Note that all the quotients that appear here are with respect to the conjugation actions.

Being the trace of an $\En_2$-category, the left hand side has an natural induced $\En_1$-structure, which is identified with the convolution $\En_1$-structure on the right hand side. In particular, we have an $\En_1$-monoidal functor from the $\En_2$-Hecke category $\Hecke_{G,B}$ to the affine Hecke category $\Hecke_{\aff}$
\[
	\Hecke_{G,B} \to \int_{S^1} \Hecke_{G,B} \simeq  \IndCoh_{B/B}(B/B \times_{G/G} B/B) \hookrightarrow \IndCoh(B/B \times_{G/G} B/B) \defeq \Hecke_{\aff}.
\]
\end{expl}

\subsubsection{A generalization} \label{subsubsec:intro:a_generalization}
The pair $BP \to BG$ used in \cref{thm:intro:main_Eis} can be replaced by any pair of stacks $\mathcal{Y} \to \mathcal{Z}$ such that both $\mathcal{Y}$ and $\mathcal{Z}$ are perfect and locally of finite presentation. Before stating the result, we will need to introduce some notation.

For any stack $\mathcal{Y}$ and any topological space $M$, we use $\mathcal{Y}^M \defeq \cMap(M, \mathcal{Y})$ to denote the associated (derived) mapping stack, i.e. the stack of maps from $M$ to $\mathcal{Y}$, where we view $M$ as a constant stack. For example, when $\mathcal{Y} = BG$, $\mathcal{Y}^M = BG^M = \LocSys_G(M)$ seen above. In general, when $M$ is a finite CW complex, we can build $\mathcal{Y}^M$ iteratively using a cell attachment presentation of $M$.

\begin{expl}
For any stack $\mathcal{Y}$, $\mathcal{Y}^{S^1} \simeq \mathcal{Y} \times_{\mathcal{Y} \times \mathcal{Y}} \mathcal{Y}$ and $\mathcal{Y}^{S^2} \simeq \mathcal{Y} \times_{\mathcal{Y}^{S^1}} \mathcal{Y}$. These come from the following presentations $S^1 \simeq \pt \sqcup_{\pt \sqcup \pt} \pt$ and $S^2 \simeq \pt \sqcup_{S^1} \pt$, respectively.
\end{expl}

Now, for any pair of stacks $\mathcal{Y} \to \mathcal{Z}$ and any pair of topological spaces $N \to M$, we use $(\mathcal{Y}, \mathcal{Z})^{N, M} = \mathcal{Y}^N \times_{\mathcal{Z}^N} \mathcal{Z}^M$ to denote the stack of commutative squares
\[
\begin{tikzcd}
	N \ar{r} \ar{d} & \mathcal{Y} \ar{d} \\
	M \ar{r} & \mathcal{Z}
\end{tikzcd}
\]
When $\mathcal{Y} \to \mathcal{Z}$ is chosen to be $BP \to BG$, we recover $(BP, BG)^{N, M} = \LocSys_{G, P}(M, N)$ mentioned above.

Given such a pair $\mathcal{Y} \to \mathcal{Z}$, we can define the $\En_n$-Hecke category 
\[
	\Hecke_n(\mathcal{Y}, \mathcal{Z}) \defeq \IndCoh_0\left(\left((\mathcal{Y}, \mathcal{Z})^{S^{n-1}, D^n}\right)^\wedge_{\mathcal{Y}}\right),
\]
whose $\En_n$-monoidal structure is given by a higher dimensional analog of the pair of pants construction. We obtain the following generalization of \cref{thm:intro:main_Eis} above.

\begin{thm}[\cref{thm:main}] \label{thm:intro:general_facthom_n}
Let $\mathcal{Y} \to \mathcal{Z}$ be a morphism of stacks such that $\mathcal{Y}$ and $\mathcal{Z}$ are perfect and locally of finite presentation. Then, for any $n$-dimensional manifold $M$, we have
\[
	\int_M \Hecke_n(\mathcal{Y}, \mathcal{Z}) \simeq \IndCoh_0\left(\left((\mathcal{Y}, \mathcal{Z})^{\partial M, M}\right)^\wedge_{\mathcal{Y}^M}\right).
\]
\end{thm}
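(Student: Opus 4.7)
The natural strategy is to invoke the universal characterization of factorization homology due to Ayala--Francis: the functor $M \mapsto \int_M A$ from (framed) $n$-manifolds is the symmetric monoidal left Kan extension of the $\En_n$-algebra $A$ along $\Disk_n \hookrightarrow \Mnfd_n$, or equivalently the unique symmetric monoidal, $\otimes$-excisive extension of $A$. So I would set
\[
    F(M) \defeq \IndCoh_0\!\left(\left((\mathcal{Y}, \mathcal{Z})^{\partial M, M}\right)^\wedge_{\mathcal{Y}^M}\right),
\]
organize it as a symmetric monoidal functor on $\Mnfd_n$, and check two things: (a) $F(D^n) \simeq \Hecke_n(\mathcal{Y}, \mathcal{Z})$, and (b) $F$ satisfies $\otimes$-excision along collar decompositions by $(n-1)$-manifolds. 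Part (a) is tautological from the definitions, since $\partial D^n = S^{n-1}$. Symmetric monoidality on disjoint unions is similarly essentially formal, using the factorization $(\mathcal{Y}, \mathcal{Z})^{\partial M \sqcup \partial M',\, M \sqcup M'} \simeq (\mathcal{Y}, \mathcal{Z})^{\partial M, M} \times (\mathcal{Y}, \mathcal{Z})^{\partial M', M'}$ and the Künneth property of $\IndCoh_0$ for external products.

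The substantive content is clause (b). Given a collar decomposition $M = M_1 \cup_{N \times \mathbb{R}} M_2$ with $N$ a closed $(n-1)$-manifold, the pushout of topological spaces becomes a pullback after applying derived mapping stacks; schematically
\[
    \mathcal{Z}^M \simeq \mathcal{Z}^{M_1} \times_{\mathcal{Z}^N} \mathcal{Z}^{M_2}, \qquad \mathcal{Y}^M \simeq \mathcal{Y}^{M_1} \times_{\mathcal{Y}^N} \mathcal{Y}^{M_2},
\]
and analogously for the pair-mapping stacks $(\mathcal{Y}, \mathcal{Z})^{\partial M_i, M_i}$, with the shared collar contributing on the $\mathcal{Y}$-side and the outer boundaries partitioning accordingly. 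I would then try to reduce $\otimes$-excision for $F$ to a Künneth-type statement for $\IndCoh_0$ of formal completions: the canonical functor
\[
    F(M_1) \otimes_{F(N \times \mathbb{R})} F(M_2) \longrightarrow F(M)
\]
is an equivalence because $\IndCoh_0$ of the formal completion of a fiber product decomposes as the relative tensor product of $\IndCoh_0$'s of the formal completions of the factors. This is the category-theoretic analog of the base-change package for $\IndCoh_0$ developed by Arinkin--Gaitsgory and Beraldo, and it is precisely where the hypothesis that $\mathcal{Y}$ and $\mathcal{Z}$ are perfect and locally of finite presentation is used.

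I expect this Künneth/base-change statement for $\IndCoh_0$ on formal completions of mapping stacks to be the main obstacle. One has to pay attention to three points: (i) matching the formal completions correctly --- each $F(M_i)$ is completed along $\mathcal{Y}^{M_i}$ while the overlap is completed along $\mathcal{Y}^{N \times \mathbb{R}} \simeq \mathcal{Y}^N$, and one must verify these are compatible with the collar gluing; (ii) $\IndCoh_0$ is a hybrid of $\QCoh$ and $\IndCoh$, so the Künneth formula is not a direct corollary of the ones for either of its constituents but has to be argued in the formal-completion setting where it lives; and (iii) propagating the hypotheses of perfectness and finite presentation from $\mathcal{Y}, \mathcal{Z}$ to the iterated mapping stacks $\mathcal{Y}^{M_i}, \mathcal{Z}^{M_i}$, which should follow from a handle decomposition of $M$ together with the stability of those conditions under finite homotopy limits.

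Once the Künneth formula is in place, $\otimes$-excision of $F$ follows by a direct manipulation of pullback squares, combining with (a) and symmetric monoidality the Ayala--Francis theorem delivers the desired equivalence $F(M) \simeq \int_M \Hecke_n(\mathcal{Y}, \mathcal{Z})$. As a sanity check, plugging in $n = 2$ and $\mathcal{Y} \to \mathcal{Z} = BP \to BG$ should recover \cref{thm:intro:main_Eis}, and specializing $\mathcal{Y} = \mathcal{Z}$ (so the formal completion collapses) should recover the previously known computations of Ben-Zvi--Francis--Nadler and Beraldo.
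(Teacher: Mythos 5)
Your high-level framework matches the paper's: organize
\[
F(M) \defeq \IndCoh_0\left(\left((\mathcal{Y},\mathcal{Z})^{\partial M, M}\right)^\wedge_{\mathcal{Y}^M}\right)
\]
as a symmetric monoidal functor on $\Mnfd_n$, check that $F(D^n)\simeq\Hecke_n(\mathcal{Y},\mathcal{Z})$, and invoke the Ayala--Francis characterization of factorization homology via $\otimes$-excision. Your points (a), (iii), and the symmetric monoidality argument are all fine and are handled essentially as you indicate (perfectness and lfp propagate through mapping stacks by the stability properties recorded in \cref{rmk:perfectness_lfp_mapping_stacks}). The gap is precisely at the step you yourself flag as ``the main obstacle.'' The statement you appeal to --- that the canonical functor
\[
F(M_1)\otimes_{F(N\times\mathbb{R})} F(M_2) \longrightarrow F(M)
\]
is an equivalence because ``$\IndCoh_0$ of the formal completion of a fiber product decomposes as the relative tensor product of $\IndCoh_0$'s of the formal completions of the factors'' --- is not part of the base-change package of Arinkin--Gaitsgory or Beraldo. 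The K\"unneth formula that $\IndCoh_0$ actually has, \cref{prop:categorical_Kuenneth}, is relative over $\QCoh$: it computes $\IndCoh_0(\mathcal{V}^\wedge_\mathcal{U})\otimes_{\QCoh(\mathcal{Z})}\IndCoh_0(\mathcal{Y}^\wedge_\mathcal{X})$, not a tensor product over $\IndCoh_0$ of a formal completion. The latter is, essentially by definition, the content of $\otimes$-excision itself, so you cannot cite it; you have to prove it.

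The paper closes exactly this gap by working relatively over an auxiliary monoidal category and using a bar-resolution/descent argument. Set $\mathcal{B}=\QCoh(\mathcal{Y}^{M_0})$ (where $M_0$ is your $N$), which maps monoidally into $F(M_0\times\mathbb{R})$ by $\IndCoh_0$ $*$-pushforward, endowing each of $F(M_1)$, $F(M_0\times\mathbb{R})$, $F(M_2)$ with $\mathcal{B}$-module structures that involve only pushforward (the pullback half of the correspondence functoriality is ``absorbed'' into $\mathcal{B}$). One then writes the relative tensor product on the left as the geometric realization of the bar complex $F(M_1)\otimes_{\mathcal{B}} F(M_0\times\mathbb{R})^{\otimes_\mathcal{B}\bullet}\otimes_{\mathcal{B}} F(M_2)$, computes each term using the $\QCoh$-relative K\"unneth formula \cref{prop:categorical_Kuenneth} together with the fact that $(\mathcal{Y},\mathcal{Z})^{-,-}$ sends colimits to limits, and then recognizes the resulting simplicial category as the co\v{C}ech nerve of the map $(\partial M, M)\to(\partial M_1\sqcup_{M_0}\partial M_2, M)$ in $\Spc^{\Delta^1}$ run through $(\mathcal{Y},\mathcal{Z})^{-,-}$ and $\IndCoh_0$. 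Covariant descent for $\IndCoh_0$, \cref{cor:covariant_descent_for_IndCoh_0}, then identifies the geometric realization with $F(M)$. It is this chain --- working over $\QCoh(\mathcal{Y}^{M_0})$, the bar expansion, the co\v{C}ech identification, and $\IndCoh_0$-descent --- that actually establishes $\otimes$-excision, and it is absent from your write-up.
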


Given an $\En_n$-algebra, one can take its factorization homology over any $d$-dimensional manifold where $d\leq n$. The following is an immediate consequence of the theorem above.

\begin{cor} \label{cor:intro:general_facthom_d}
Let $\mathcal{Y} \to \mathcal{Z}$ be as in the previous theorem. Then, for any $d$-dimensional manifold $M$, with $d\leq n$, we have
\begin{align*}
	\int_{M} \Hecke_n(\mathcal{Y}, \mathcal{Z}) 
	\simeq \int_{M \times D^{n-d}} \Hecke_n(\mathcal{Y}, \mathcal{Z}) 
	\simeq \IndCoh_0\left(\left((\mathcal{Y}, \mathcal{Z})^{\partial (M\times D^{n-d}), M}\right)^\wedge_{\mathcal{Y}^M}\right).
\end{align*}
\end{cor}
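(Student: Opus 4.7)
The plan is to reduce the corollary to \cref{thm:intro:general_facthom_n} by stabilizing the dimension of the manifold, so that nothing new beyond the $n$-dimensional statement and a homotopy-invariance argument for mapping stacks is needed.

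First, I would invoke the standard feature of factorization homology (Ayala--Francis~\cite{ayala_factorization_2015}, Lurie~\cite{lurie_higher_2017}) that an $\En_n$-algebra $A$ may be integrated over a $d$-manifold $M$ with $d \leq n$ precisely because $M \times D^{n-d}$ (or equivalently $M \times \mathbb{R}^{n-d}$) is an $n$-manifold with a framing compatible with the framing data needed to integrate $A$, and moreover the canonical map
\[
    \int_M A \;\simeq\; \int_{M \times D^{n-d}} A
\]
is an equivalence, because the collar $D^{n-d}$ contributes only a trivial framed family of $n$-disks over each point of $M$. Applying this to $A = \Hecke_n(\mathcal{Y}, \mathcal{Z})$ gives the first of the two stated equivalences.

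Next, since $M\times D^{n-d}$ is an $n$-dimensional manifold with boundary $\partial(M\times D^{n-d})$, and since $\mathcal{Y}\to \mathcal{Z}$ satisfies the hypotheses of \cref{thm:intro:general_facthom_n}, I would apply that theorem directly to the pair $(\partial(M\times D^{n-d}), M\times D^{n-d})$, yielding
\[
    \int_{M\times D^{n-d}} \Hecke_n(\mathcal{Y}, \mathcal{Z}) \;\simeq\; \IndCoh_0\!\left(\left((\mathcal{Y}, \mathcal{Z})^{\partial(M\times D^{n-d}),\, M\times D^{n-d}}\right)^\wedge_{\mathcal{Y}^{M\times D^{n-d}}}\right).
\]
To finish, I would use the fact that mapping stacks $\mathcal{W}\mapsto \mathcal{W}^{(-)}$ depend only on the (weak) homotopy type of the source, together with the obvious deformation retraction $M \times D^{n-d} \to M$. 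This identifies $\mathcal{Y}^{M\times D^{n-d}} \simeq \mathcal{Y}^M$ and $\mathcal{Z}^{M\times D^{n-d}} \simeq \mathcal{Z}^M$ compatibly with the restriction maps from $\partial(M\times D^{n-d})$, and hence identifies
\[
    (\mathcal{Y},\mathcal{Z})^{\partial(M\times D^{n-d}),\, M\times D^{n-d}} \;\simeq\; (\mathcal{Y},\mathcal{Z})^{\partial(M\times D^{n-d}),\, M}
\]
as stacks over $\mathcal{Y}^M$. Since $\IndCoh_0$ depends only on the completion datum of a closed embedding, this identification carries the formal completion and the $\IndCoh_0$-category on the nose, producing the right-hand side of the corollary.

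The main obstacle, such as it is, will be the first step: ensuring that the stabilization identity $\int_M A \simeq \int_{M\times D^{n-d}} A$ holds in the framework of framed (or otherwise suitably structured) factorization homology used in the paper, and that the $\En_n$-structure on $\Hecke_n(\mathcal{Y},\mathcal{Z})$ is compatible with it. In practice this is part of the foundational setup of $\int_M(-)$ in \cref{subsec:prelim_fact_hom}, so the proof amounts to assembling the above three formal moves rather than performing new computations.
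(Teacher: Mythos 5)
Your proof is correct and spells out precisely the three formal moves the paper has in mind when it declares this corollary an ``immediate consequence'' of \cref{thm:intro:general_facthom_n}: the stabilization identity $\int_M A \simeq \int_{M\times D^{n-d}} A$ (which for $d<n$ is essentially the definition of integrating an $\En_n$-algebra over a $d$-manifold), application of the $n$-dimensional theorem to $M\times D^{n-d}$, and homotopy invariance of derived mapping stacks to replace $\mathcal{Y}^{M\times D^{n-d}}$ and $\mathcal{Z}^{M\times D^{n-d}}$ by $\mathcal{Y}^M$ and $\mathcal{Z}^M$. Since the paper gives no proof, your argument simply makes the intended ``immediate'' reasoning explicit, and it is sound.
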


\subsubsection{The Hecke pair condition} \label{subsubsec:Hecke_pair_condition}
We note that the sheaf theory $\IndCoh_0$ for unbounded stacks is much more complicated than the theory for bounded stacks, in terms of definition, computability, and formalism. This is unavoidable if we work with high dimensional manifolds, even if we start with a smooth stack. For example, the derived mapping scheme $(\mathbb{A}^1)^{S^2} \simeq \mathbb{A}^1 \times \Spec \Sym \mathbb{C}[2] \simeq \mathbb{A}^1 \times \mathbb{A}^1[-2]$ is unbounded. 

While this does not affect the proof of \cref{thm:intro:general_facthom_n} (since we do not need any special property of $\IndCoh_0$ for bounded stacks), in practice, it is in general much easier to stay in the world of bounded stacks as far as $\IndCoh_0$ is concerned. Fortunately, our main example $BP \to BG$ used in Eisenstein series satisfies a certain finiteness condition called the \emph{Hecke pair} condition (see \cref{defn:Hecke_pair}). For any Hecke pair $\mathcal{Y} \to \mathcal{Z}$, the proof of \cref{thm:intro:general_facthom_n} stays within the world of perfect, locally of finite type, and bounded stacks. The reader whose main interest is Eisenstein series can restrict themselves to this case without losing the main point of the paper.

\subsection{Relation to other work}

\subsubsection{Betti Langlands}
It is proved in \cite{ben-zvi_integral_2010} that for any topological surface $M$,
\[
	\int_M \Rep(G) \simeq \QCoh(\LocSys_G(M)). \teq\label{eq:Betti_Langlands_LS_FactHom}
\]
Here, $\Rep(G) = \QCoh(BG)$ is a symmetric monoidal category (i.e. an $\En_\infty$-category), viewed as an $\En_2$-category when taking factorization homology. Our \cref{thm:intro:main_Eis} recovers this statement when $P=G$. Indeed, in this case, we have
\[
	\Hecke_{G, G} = \IndCoh(\LocSys_{G, G}(D^2, S^1)) \simeq \IndCoh(\LocSys_G(D^2)) \simeq \QCoh(BG) \simeq \Rep(G)
\]
and
\[
	\Eis_{G, G}(M) = \IndCoh_0(\LocSys_{G, G}(M, \partial M)^\wedge_{\LocSys_G(M)}) \simeq \IndCoh_0(\LocSys_G(M)^\wedge_{\LocSys_G(M)}) \simeq \QCoh(\LocSys_G(M)).
\]

\subsubsection{}
This result is refined and extended in~\cite{beraldo_topological_2019} where for any lfp stack $\mathcal{Y}$ and a fixed positive integer $n$, the $\En_n$-spherical category $\Sph(\mathcal{Y}, n-1) = \IndCoh_0((\mathcal{Y}^{S^{n-1}})^\wedge_\mathcal{Y})$ is defined and its factorization homology on any $d$-manifold $M$ is computed
\[
	\int_M \Sph(\mathcal{Y}, n-1) \simeq \IndCoh_0((\mathcal{Y}^{\partial(M\times D^{n-d})})^\wedge_{\mathcal{Y}^{M}}).
\]
This can be recovered by setting $\mathcal{Z} = \pt$ in \cref{cor:intro:general_facthom_d}.

Our result is thus a common generalization of both of these results in \cites{ben-zvi_integral_2010,beraldo_topological_2019}.

\subsubsection{Geometric Langlands}
In the Geometric Langlands program, we have an analog of~\cref{eq:Betti_Langlands_LS_FactHom}. However, instead of an equivalence, the (de Rham version of the) RHS of~\cref{eq:Betti_Langlands_LS_FactHom} only embeds fully faithfully into a factorization category, an analog of the LHS of~\cref{eq:Betti_Langlands_LS_FactHom}.

We learned from D. Beraldo that a factorization category analogous to the $\En_2$-category $\Hecke_{G, \pt}$ appeared in Rozenblyum's thesis~\cite{rozenblyum_connections_2011}. The relation between this and the de Rham analog of our main result is also mentioned in the introduction of~\cite{rozenblyum_connections_2021}.

\subsection{Questions and future work}
Our result opens up several questions that we hope to address in future publications. 

\subsubsection{Interaction with Ben-Zvi--Nadler's spectral manifold gluing}
In~\cite{ben-zvi_betti_2021}, a spectral manifold gluing formula is formulated and proved for the \emph{entire spectral category} for any $2$-dimensional (possibly open) manifold. One important difference between our gluing and theirs is that while Eisenstein series categories could be built up from just their values on a $2$-dimensional disk using collar gluing, the entire spectral category has to be built up from more complicated pieces and one can only glue along cylinders. 

From the perspective of spectral Eisenstein gluing,\footnote{i.e. gluing Eisenstein series categories together to obtain the whole spectral category.} it is natural to investigate the relation between our gluing and theirs. In particular, we would like to
\begin{myenum}{(\roman*)}
	\item extend spectral Eisenstein series gluing to non-compact manifolds; and
	\item understand local-to-global properties of this gluing. We expect that the gluing seen in~\cite{ben-zvi_betti_2021} is a manifestation of this.
\end{myenum}
We expect that the solution to these problems will play an important role in the construction/study of the conjectural $(3+1)$-dimensional TFT suggested by Ben-Zvi--Nadler~\cite{ben-zvi_betti_2021}*{\S1.1.1} which assigns the spectral category $\IndCoh_\mathcal{N}(\LocSys_G(M))$ to a topological surface $M$. 

\subsubsection{Automorphic expectations}
We describe here some expectations on the automorphic side of the Betti Langlands program. For simplicity, we shall focus on the case where $P=B$. 

Let $G^\vee$ be the Langlands dual group of $G$, $G^{\vee}\llrrp{z}$, $G^\vee\llrrb{z}$ the loop and arc groups of $G^\vee$ respectively. Let $B^\vee$ be the dual Borel, $I^\vee \subset G^\vee\llrrb{z}$ the Iwahori subgroup associated to $B^\vee$, and $\Gr_{G^\vee} = G^\vee\llrrp{z}/G^\vee\llrrb{z}$ the affine Grassmannian attached to the group $G^\vee$. Denote by $\Shv_{I^\vee}(\Gr_{G^\vee})$ and $\Shv_{N^\vee\llrrp{z}}(\Gr_{G^\vee})$ the categories of $I$- and $N^\vee\llrrp{z}$-constructible sheaves on $\Gr_{G^\vee}$ respectively. Combining~\cites{arkhipov_quantum_2004, raskin_chiral_2014, gaitsgory_semi-infinite_2018}, we have the following statement.

\begin{thm} \label{thm:quantum_connection}
There are equivalences of DG categories\footnote{The first equivalence is due to~\cites{raskin_chiral_2014, gaitsgory_semi-infinite_2018} whereas the second is due to \cite{arkhipov_quantum_2004}. The last equivalence is a simple computation, see~\cref{eq:computation_of_H_GP}.}
\[
	\Shv_{N^\vee\llrrp{z}}(\Gr_{G^\vee}) \simeq \Shv_{I^\vee}(\Gr_{G^\vee}) \simeq \IndCoh(\mathfrak{n}^*[-1]/B) \simeq \Hecke_{G, B}.
\]
\end{thm}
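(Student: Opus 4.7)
The first two equivalences in the chain are invoked directly from the cited literature: $\Shv_{N^\vee\llrrp{z}}(\Gr_{G^\vee}) \simeq \Shv_{I^\vee}(\Gr_{G^\vee})$ is the content of Raskin's and Gaitsgory's work on the semi-infinite IC sheaf, and $\Shv_{I^\vee}(\Gr_{G^\vee}) \simeq \IndCoh(\mathfrak{n}^*[-1]/B)$ is a consequence of the Arkhipov-Bezrukavnikov equivalence. The substantive task is then the final equivalence $\IndCoh(\mathfrak{n}^*[-1]/B) \simeq \Hecke_{G, B}$, which the paper promises is a direct computation.

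By definition, $\Hecke_{G, B} = \IndCoh(\LocSys_{G, B}(D^2, S^1))$. Since $D^2$ is contractible and $S^1 \simeq B\mathbb{Z}$, I would identify $\LocSys_G(D^2) \simeq BG$, $\LocSys_G(S^1) \simeq G/G$, and $\LocSys_B(S^1) \simeq B/B$, so that
\[
    \LocSys_{G, B}(D^2, S^1) \simeq BG \times_{G/G} B/B.
\]
The problem thus reduces to establishing an equivalence of derived stacks $BG \times_{G/G} B/B \simeq \mathfrak{n}^*[-1]/B$, after which applying $\IndCoh$ yields the result.

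For this identification, I would base-change along the atlas $\pt \to BG$, yielding $\pt \times_{G/G} B/B$. Unwinding the fiber product, an $R$-point of this stack is a $B$-bundle $P$ on $\Spec R$ equipped with $\phi \in \text{Aut}_B(P)$ together with a trivialization of the induced $G$-bundle carrying $\phi_G$ to the identity. Using the trivialization, $P$ is classified by an $R$-point of $G/B$, while $\phi$ lies in the (twisted) derived fiber $B \times^h_G \{1\}$. From the fiber sequence $B \to G \to G/B$, this fiber equals the based loop space $\Omega_{[1]}(G/B) \simeq T_{[1]}(G/B)[-1] = (\mathfrak{g}/\mathfrak{b})[-1] \simeq \mathfrak{n}^*[-1]$ (using the Killing-form identification $\mathfrak{g}/\mathfrak{b} \simeq \mathfrak{n}^- \simeq \mathfrak{n}^*$). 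Globally and $G$-equivariantly, this gives $\pt \times_{G/G} B/B \simeq G \times^B \mathfrak{n}^*[-1]$, and descending along the atlas yields
\[
    BG \times_{G/G} B/B \simeq (G \times^B \mathfrak{n}^*[-1])/G \simeq \mathfrak{n}^*[-1]/B.
\]

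The main delicate point is that the equivalence $\pt \times_{G/G} B/B \simeq G \times^B \mathfrak{n}^*[-1]$ must be verified at the level of derived stacks, not merely at the level of tangent complexes. I would do this either by a direct functorial matching of $R$-points using a Koszul-type resolution for the regular closed embedding $\{1\} \hookrightarrow G/B$, or by observing that both sides are quasi-smooth $G$-equivariant derived thickenings of the flag variety whose formal structure at the basepoint $BB$ agrees and then invoking $G$-homogeneity. Once this stack-level equivalence is established, applying $\IndCoh$ is immediate and completes the proof.
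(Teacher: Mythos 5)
Your argument is correct and essentially reproduces the paper's computation~\cref{eq:computation_of_H_GP}, which writes $\LocSys_{G,B}(D^2,S^1)\simeq \frac{B}{B}\times_{\frac{G}{G}}BG\simeq\frac{B}{B}\times_{\frac{G}{B}}BB\simeq\mathfrak{n}^*[-1]/B$; both arguments hinge on identifying the derived fiber of the regular closed embedding $B\hookrightarrow G$ over $1$ (equivalently the derived self-intersection $\Omega_{[1]}(G/B)$) with the shifted normal space $(\mathfrak{g}/\mathfrak{b})[-1]\simeq\mathfrak{n}^*[-1]$, which your Koszul-resolution argument correctly justifies at the level of derived stacks. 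The only cosmetic difference is that you base-change to the atlas $\pt\to BG$ and then descend, whereas the paper manipulates the quotient stacks directly, and the first two equivalences are, as you note, purely citations from the literature in both accounts.
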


$\Shv_{N^\vee\llrrp{z}}(\Gr_{G^\vee})$ is naturally a factorizable category and hence, is an $\En_2$-category. It is thus natural to expect the following statement.

\begin{expt}
The equivalence of \cref{thm:quantum_connection} is compatible with the $\En_2$-structure on both sides; namely, we have an equivalence of $\En_2$-categories $\Shv_{N^\vee\llrrp{z}}(\Gr_{G^\vee}) \simeq \Hecke_{G, B}$.
\end{expt}

For this reason, we use $\Hecke^{\auto}_{G^\vee,B^\vee}$ to denote the $\En_2$-category $\Shv_{N^\vee\llrrp{z}}(\Gr_{G^\vee})$. 

\subsubsection{}
Let us now consider the automorphic side. Let $\wtilde{M}$ be a compact Riemann surface and 
%For simplicity, and in view of Theorem~\ref{thm:intro_Eis_non-compact}, we only consider here the case when $M$ is non-compact, the compact case can be studied similarly as in \cite{gaitsgory_outline_2015}.
 $S \subset \wtilde{M}$ be a finite set. Put $M=\wtilde{M}\setminus S$ and let $\Bun_{G^\vee,N^\vee}=\Bun_{G^\vee,N^\vee}(\wtilde{M},S)$ be the moduli stack of $G^\vee$-bundles on $\wtilde{M}$  with $N^\vee$-reduction along $S$. Denote by $\mathcal{N}^\vee \subset T^*\Bun_{B^\vee,N^\vee}$ the global nilpotent cone. Recall that the Betti Langlands conjecture \cite{ben-zvi_betti_2016} asserts an equivalence
 \[
	 \Shv_{\mathcal{N}^\vee}(\Bun_{G^\vee,N^\vee}(\wtilde{M},S))
 	\simeq \IndCoh_{\mathcal{N}}(\LocSys_{G,B}(M, \partial M)).
 \]
 
 \subsubsection{}
 We will now consider the automorphic Eisenstein series for $B$. Denote by $\Eis_{G^\vee,B^\vee}^{\auto}(\wtilde{M},S)$ the full subcategory of $\Shv(\Bun_{G^\vee,N^\vee})$ generated by the image of $\Loc(\Bun_{T^\vee,1})$ under the functor $p_!q^*$ (which is expected to lie in $\Shv_{\mathcal{N}^\vee}(\Bun_{G^\vee,N^\vee})$)\footnote{It is a subcategory rather than something more sophisticated in view of \cref{thm:intro_Eis_non-compact}.}
\[
\begin{tikzcd}
 \Bun_{T^\vee,1}  & \Bun_{B^\vee,N^\vee} \ar{l}[swap]{q} \ar{r}{p}  &  \Bun_{G^\vee,N^\vee}
\end{tikzcd}
\]
where $\Loc(\Bun_{T^\vee,1}) \subset \Shv(\Bun_{T^\vee,1})$ denotes the full subcategory spanned by local systems. The compatibility between Betti Langlands conjecture and Eisenstein series asserts that $\Eis^{\auto}_{G^\vee,B^\vee}(\wtilde{M},S) \simeq \Eis_{G,B}(M)$. We therefore expect the following automorphic version of \cref{thm:intro:main_Eis} for non-compact $M$.

\begin{expt}
	\label{expt:intro_auto_eis}
	Assume $S$ is non-empty, then 
		 	\[
		\int_{M} \Hecke^{\auto}_{G^\vee, B^\vee} 
		\simeq \Eis^{\auto}_{G^\vee,B^\vee}(\wtilde{M},S)
		\]
\end{expt}

\begin{rmk}
For $S=\emptyset$, one can formulate the above expectation by using Betti analogue of the enhanced Eisenstein series defined in \cite{gaitsgory_outline_2015}. 
\end{rmk}

\begin{expl}
For $\wtilde{M}=\mathbb{P}^1$ and $S=\{0\}$, \cref{expt:intro_auto_eis} is given by the composition of the first equivalence in \cref{thm:quantum_connection} and the Radon transform $\Shv_{I^\vee}(\Gr_{G^\vee}) \simeq \Shv_{\mathcal{N}^\vee}(\Bun_{B^\vee,N^\vee}(\mathbb{P}^1,0))$.
\end{expl}

\begin{expl}
The case for $\wtilde{M}=\mathbb{P}^1$ and $S=\{0,\infty\}$ can be related to Langlands duality for affine Hecke categories. Denote by $ \wtilde{\mathsf{Fl}}_{G^\vee}= G^\vee\llrrp{z}/I^\vee_0,$ for $I^\vee_0 \subset I^\vee$ the pro-unipotent radical. Denote by  $\Hecke^{\auto}_{\aff}$ the category of $I^\vee$-constructible sheaves on $\wtilde{\mathsf{Fl}}_{G^\vee}$; it is naturally an $\En_1$-category by convolution. The (twisted) Radon transform \cite{nadler_geometric_2019-1}*{Lemma 2.6.1} yields an equivalence
\[
	\Shv_{\mathcal{N}^\vee}(\Bun_{G^\vee,N^\vee}(\mathbb{P}^1,\{0,\infty\})) \simeq \Hecke^{\auto}_{\aff}.
\]

Under this equivalence, the full subcategory $\Eis_{G^\vee,B^\vee}(\mathbb{P}^1,\{0,\infty\})$ is identified with the full $\En_1$-subcategory $\Waki \subset \Hecke^{\auto}_{\aff}$ generated by (universal-monodromic) Wakimoto sheaves. Therefore, \cref{expt:intro_auto_eis} implies the following equivalence
\[
	\int_{S^1} \Hecke^{\auto}_{G,B} \simeq \Waki.
\]

\subsubsection{}
The equivalence above should be compatible with the natural $\En_1$-structure on both sides. In view of \cref{expl:intro_cylinder}, the following diagram of $\En_1$-categories is expected to commute
 \[
 \begin{tikzcd}
 \Hecke_{G,B} \ar{r} \ar{d}{\simeq} &	\int_{S^1} \Hecke_{G,B} \ar{r}{\simeq} \ar{d}{\simeq} & \IndCoh_{B/B}(B/B \times_{G/G} B/B) \ar[hookrightarrow]{r}\ar{d}{\simeq} & \Hecke_{\aff} \ar{d}{\simeq} \\
  \Hecke^{\auto}_{G,B} \ar{r} &	\int_{S^1} \Hecke^{\auto}_{G,B} \ar{r}{\simeq} & \Waki  \ar[hookrightarrow]{r}	&  \Hecke^{\auto}_{\aff}
 \end{tikzcd}
 \]
 where the first two vertical arrows are induced by \cref{expt:intro_auto_eis}, and last two vertical arrows are universal-monodromic version of Bezrukavnikov's Langlands duality for affine Hecke categories~\cite{bezrukavnikov_two_2016}.
\end{expl}

\section{Preliminaries}

We will set up the necessary notation and review results used throughout the paper. We will mainly follow the notation and conventions of~\cite{gaitsgory_study_2017}; most results about category theory and algebraic geometry that we use in the paper can be found there.

\subsection{Category theory} \label{subsec:prelim_cat_theory}
Throughout the paper, the term \emph{DG category} means \emph{stable presentable $k$-linear $\infty$-category} in the sense of~\cite{lurie_higher_2017}, where $k$ is a fixed algebraically closed field of characteristic $0$. We will use $\DGCat$ to denote the category of DG categories with morphisms given by continuous functors. $\DGCat$ is equipped with the Lurie symmetric monoidal structure.

We use $\Spc$ to denote the $\infty$-category of spaces, or equivalently, $\infty$-groupoids. Moreover, $\Spc_{\fin}$ is the full subcategory of $\Spc$ spanned by finite CW complexes. Both of these categories are symmetric monoidal under the usual Cartesian products of spaces.

\subsection{Derived algebraic geometry} \label{subsec:prelim_DAG}

We will now review some notions from derived algebraic geometry. Throughout the paper, we work over a fixed algebraically closed field $k$ of characteristic $0$. All of our schemes/stacks are, by default, derived. We thus drop the adjective derived from the terminology.

The various technical properties of schemes/prestacks/stacks recalled here are only necessary because they are required by the theories of $\IndCoh$ and $\IndCoh_0$ used in the paper. As such, the reader who is unfamiliar with the theory may simply skim this section to get the general idea (and return to it when necessary) without losing the gist of the paper.

\subsubsection{Affine schemes}
Let $\Aff$ denote the $\infty$-category of affine schemes over $k$. It is the opposite of the category of (DG) commutative rings over $k$ cohomologically supported in degrees $\leq 0$. An affine scheme $\Spec A$ is \emph{bounded} or \emph{eventually co-connective} if $A$ is supported in finitely many cohomological degrees. We use $\Aff^{< \infty}$ to denote the full subcategory of $\Aff$ consisting of bounded affine schemes.

Let $\Aff_{\aft} \subseteq \Aff$ denote the full subcategory of $\Aff$ consisting of affine schemes $\Spec A$ \emph{almost of finite type}, which means that $\Ho^0(A)$ is of finite type over $k$ and for any $i$, $\Ho^i(A)$ is a finitely generated $A$-module.

\subsubsection{Prestacks}
The category of prestacks is defined to be the category of functors from $\Aff^{\opp}$ to $\Spc$. Namely,
\[
	\PreStk \defeq \Fun(\Aff^{\opp}, \Spc).
\]

\subsubsection{Stacks} \label{subsubsec:prelim_stacks}
Let $\Stk \subseteq \PreStk$ be the full subcategory consisting of quasi-compact algebraic stacks with affine diagonal and with an atlas in $\Aff_{\aft}$. We simply call them stacks.

A stack $\mathcal{Y} \in \Stk$ is \emph{bounded} if for some (equivalently, any) atlas $Y \to \mathcal{Y}$ where $Y\in \Aff_{\aft}$, $Y$ is in fact in $\Aff_{\aft}^{<\infty}$, i.e. it is bounded. We let $\Stk^{<\infty}$ denote the full subcategory of $\Stk$ consisting of bounded stacks.

Note that boundedness is not generally preserved under fiber products. A morphism $\mathcal{Y} \to \mathcal{Z}$ in $\Stk$ is bounded if its base change to any $S \to \mathcal{Z}$ where $S \in \Aff_{\aft}^{< \infty}$ is bounded.

A stack $\mathcal{Y} \in \Stk$ is \emph{perfect} if $\QCoh(\mathcal{Y})$ is generated by its subcategory of perfect complexes $\Perf(\mathcal{Y})$.\footnote{Note that our stacks already have affine diagonals by convention.} This notion was introduced by Ben-Zvi--Francis--Nadler in~\cite{ben-zvi_integral_2010}. 

A stack $\mathcal{Y} \in \Stk$ is \emph{locally finitely presented} (lfp) if its cotangent complex $\LL_\mathcal{Y} \in \QCoh(\mathcal{Y})$ is perfect.

We will generally use these properties to decorate $\Stk$ to denote the full subcategory consisting of stacks satisfying all of these properties. In particular, we use $\Stk^{<\infty}_{\perf, \lfp} \subseteq \Stk$ to denote the full category consisting of bounded, perfect, and locally finitely presented stacks. 

\subsection{The theory of $\IndCoh_0$} \label{subsec:prelim_IndCoh_0}

The sheaf theory $\IndCoh_0$ is developed in~\cite{arinkin_category_2017-1} in the bounded case and in~\cite{beraldo_center_2020} more generally. It plays an important role in the formulation and proof of the spectral Eisenstein gluing theorem. We will now briefly recall what we need about the theory and refer the reader to~\cite{beraldo_center_2020} and references therein for proofs.

We note that the theory of $\IndCoh_0$ for bounded stacks is much simpler than the general case. Fortunately, this is all that we need for Eisenstein series. In what follows, with a view toward generalizations beyond Eisenstein series, we will, however, try to include the more general case while at the same time include remarks about the simplifications that appear when one restricts to the bounded case. The reader who is only interested in Eisenstein series can safely ignore the extra complexity.

\subsubsection{The construction}
Let $\mathcal{Y} \to \mathcal{Z}$ be a map of prestacks. Then, we define the formal completion
\[
	\mathcal{Z}^\wedge_\mathcal{Y} \defeq \mathcal{Y}_{\dR} \times_{\mathcal{Z}_{\dR}} \mathcal{Z}.
\]
Here, for any prestack $\mathcal{X}$, the de Rham prestack $\mathcal{X}_{\dR}$ of $\mathcal{X}$, is defined by the following functor of points
\[
	\Spec R \in \Aff^{\opp} \quad\mapsto\quad \mathcal{X}_{\dR}(\Spec R) = \mathcal{X}(\Spec \Ho^0(R)^{\red}),
\]
where $\Ho^0(R)^{\red}$ is the reduced ring associated to $\Ho^0(R)$.

When $\mathcal{Y}, \mathcal{Z} \in \Stk_{\perf, \lfp}$ with $\mathcal{Y}$ bounded, then
\[
	\IndCoh_0(\mathcal{Z}^\wedge_\mathcal{Y}) \defeq \IndCoh_0(\mathcal{Y} \to \mathcal{Z})
\]
is the full subcategory of $\IndCoh(\mathcal{Z}^\wedge_\mathcal{Y})$ that fits into the following Cartesian square
\[
\begin{tikzcd}
	\IndCoh_0(\mathcal{Z}^\wedge_\mathcal{Y}) \ar{r} \ar[hookrightarrow]{d} & \QCoh(\mathcal{Y}) \ar[hookrightarrow]{d}{\Upsilon} \\
	\IndCoh(\mathcal{Z}^\wedge_\mathcal{Y}) \ar{r} & \IndCoh(\mathcal{Y})
\end{tikzcd}
\]

When $\mathcal{Y}, \mathcal{Z} \in \Stk_{\lfp}$ and $\mathcal{Y}$ is perfect, the definition needs to be modified; see~\cite{beraldo_center_2020}*{Definition 4.1.6}. 

\subsubsection{Special cases} \label{subsubsec:IndCoh_0_special_niliso}
When $\mathcal{Y}$ is smooth, then $\QCoh(\mathcal{Y}) \simeq \IndCoh(\mathcal{Y})$ and hence, 
\[
	\IndCoh_0(\mathcal{Z}^\wedge_\mathcal{Y}) \simeq \IndCoh(\mathcal{Z}^\wedge_\mathcal{Y}).
\]

If $\mathcal{Y} \to \mathcal{Z}$ is a \emph{nil-isomorphism}, that is, the induced map $\mathcal{Y}_{\dR} \to \mathcal{Z}_{\dR}$ is an isomorphism, then
\[
	\mathcal{Z}^{\wedge}_\mathcal{Y} = \mathcal{Y}_{\dR} \times_{\mathcal{Z}_{\dR}} \mathcal{Z} \simeq \mathcal{Z}.
\]
Thus, if $\mathcal{Y}$ is smooth and $\mathcal{Y} \to \mathcal{Z}$ is a nil-isomorphism, then
\[
	\IndCoh_0(\mathcal{Z}^{\wedge}_\mathcal{Y}) \simeq \IndCoh(\mathcal{Z}^\wedge_\mathcal{Y}) \simeq \IndCoh(\mathcal{Z}).
\]

\subsubsection{Functoriality} \label{subsubsec:functoriality_IndCoh_0}
We will now recall functoriality of the assignment
\[
	(\mathcal{Y} \to \mathcal{Z}) \mapsto \IndCoh_0(\mathcal{Z}^\wedge_\mathcal{Y}). \teq \label{eq:indcoh_0_assignment}
\]

For any category $\mathcal{C}$, we use $\mathcal{C}^{\Delta^1} \defeq \Fun(\Delta^1, \mathcal{C})$ to denote the category of arrows in $\mathcal{C}$. Namely, objects are of the form $c_1 \to c_2$ and morphisms are the obvious commutative squares. When confusion is unlikely to occur, we will suppress the map from the notation and use $(c_1, c_2)$ to denote an object in $\mathcal{C}^{\Delta^1}$.

Consider the $1$-full subcategory $\Corr' (\mathcal{C}^{\Delta^1}) \subseteq \Corr(\mathcal{C}^{\Delta^1})$ which consists of all objects of $\Corr(\mathcal{C}^{\Delta^1})$ but which morphisms are given by correspondences of the form
\[
\begin{tikzcd}
	c_1 \ar{d} & c \ar[l] \ar{d} \ar{r}{\simeq} \arrow[phantom]{dl}[very near start]{\urcorner} & c_2 \ar{d} \\
	d_1 & \ar[l] d \ar{r} & d_2
\end{tikzcd}
\]
where, as indicated, the left square is Cartesian and the map $c\to c_2$ is an equivalence. It is easy to see that $\Corr'(\mathcal{C}^{\Delta^1})$ is closed under compositions of morphisms in $\Corr(\mathcal{C}^{\Delta^1})$, which are given by fiber products.

By~\cite{beraldo_center_2020}*{Proposition 4.7.2}, the assignment~\cref{eq:indcoh_0_assignment} upgrades to a functor\footnote{In fact, slightly more is true, and when boundedness condition is added, even more is true~\cite{beraldo_center_2020}*{Theorem 3.3.3}. However, this is all that we will need.}
\[
	\IndCoh_0: \Corr'((\Stk_{\perf, \lfp})^{\Delta^1}) \to \DGCat.
\]

\subsubsection{} \label{subsubsec:IndCoh_0_functoriality_explicit}
In particular, given a Cartesian diagram in $\Stk_{\perf, \lfp}$
\[
\begin{tikzcd}
	\mathcal{Y}_1 \ar{d} \ar{r} & \mathcal{Y}_2 \ar{d} \\
	\mathcal{Z}_1 \ar{r} & \mathcal{Z}_2
\end{tikzcd}
\]
we obtain a $!$-pullback functor $\IndCoh_0((\mathcal{Z}_2)^\wedge_{\mathcal{Y}_2}) \to \IndCoh_0((\mathcal{Z}_1)^\wedge_{\mathcal{Y}_1})$. Similarly, a diagram
\[
\begin{tikzcd}
	\mathcal{Y} \ar{d} \ar{r}{\simeq} & \mathcal{Y} \ar{d} \\
	\mathcal{Z}_1 \ar{r} & \mathcal{Z}_2
\end{tikzcd}
\]
in $\Stk_{\perf, \lfp}$ induces a $*$-pushforward functor $\IndCoh_0((\mathcal{Z}_1)^\wedge_{\mathcal{Y}}) \to \IndCoh_0((\mathcal{Z}_2)^\wedge_\mathcal{Y})$.

\subsubsection{Descent}
One salient feature of $\IndCoh_0$ is that it satisfies a strong form of descent. 

\begin{prop}[{\cite{beraldo_center_2020}*{Proposition 4.4.1}}] \label{prop:descent_for_IndCoh_0}
For any $\mathcal{W} \in \Stk_{\perf, \lfp}$, the contravariant functor $\IndCoh_0((-)^\wedge_\mathcal{W})$, via $?$-pullbacks, satisfies descent along any map $(\Stk_{\perf, \lfp})_{\mathcal{W}/}$.
\end{prop}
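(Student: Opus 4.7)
My approach reduces descent for $\IndCoh_0((-)^\wedge_\mathcal{W})$ to descent for $\IndCoh$ of formal completions, and then exploits the fact that completing at $\mathcal{W}$ turns \emph{every} morphism in $(\Stk_{\perf,\lfp})_{\mathcal{W}/}$ into a nil-isomorphism over $\mathcal{W}_{\dR}$, where descent becomes accessible via the ind-inf-schematic formalism rather than relying on any covering property of the map $f$.

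\textbf{Step 1 (reduction to $\IndCoh((-)^\wedge_\mathcal{W})$).} I apply the defining Cartesian square presenting $\IndCoh_0(\mathcal{Z}^\wedge_\mathcal{W})$ as a fiber product of $\IndCoh(\mathcal{Z}^\wedge_\mathcal{W})$, $\QCoh(\mathcal{W})$, and $\IndCoh(\mathcal{W})$ termwise to the \v{C}ech cosimplicial $\mathcal{Z}_1^{\bullet/\mathcal{Z}_2}$ of a given $f:\mathcal{Z}_1\to\mathcal{Z}_2$ in $(\Stk_{\perf,\lfp})_{\mathcal{W}/}$. Two of the three corners, $\QCoh(\mathcal{W})$ and $\IndCoh(\mathcal{W})$, do not depend on $\mathcal{Z}$, so their cosimplicial objects are constant and their totalizations are themselves. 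Since $\Tot$ commutes with the finite limit defining the square, the desired descent for $\IndCoh_0$ is equivalent to descent of $\mathcal{Z}\mapsto\IndCoh(\mathcal{Z}^\wedge_\mathcal{W})$ along $f$ via $!$-pullback.

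\textbf{Step 2 (every map becomes a nil-isomorphism after completion).} Because $(-)_{\dR}$ preserves limits of prestacks, $(\mathcal{Z}^\wedge_\mathcal{W})_{\dR}\simeq \mathcal{W}_{\dR}\times_{\mathcal{Z}_{\dR}}\mathcal{Z}_{\dR}\simeq \mathcal{W}_{\dR}$ independently of $\mathcal{Z}$. Hence every morphism in $(\Stk_{\perf,\lfp})_{\mathcal{W}/}$ acquires the same de Rham prestack on source and target after $(-)^\wedge_\mathcal{W}$, i.e.\ becomes a nil-isomorphism. Moreover $(-)^\wedge_\mathcal{W}$ preserves fiber products (again via commutation of $(-)_{\dR}$ with finite limits), so the \v{C}ech nerve of $f^\wedge_\mathcal{W}:\mathcal{Z}_1^\wedge_\mathcal{W}\to\mathcal{Z}_2^\wedge_\mathcal{W}$ is canonically $(\mathcal{Z}_1^{\bullet/\mathcal{Z}_2})^\wedge_\mathcal{W}$, each level of which admits a canonical section from $\mathcal{W}$.

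\textbf{Step 3 (descent for $\IndCoh$ on the completed \v{C}ech nerve).} After Step 2 both the augmented target $\mathcal{Z}_2^\wedge_\mathcal{W}$ and every level of the \v{C}ech nerve are formal/ind-inf-schematic prestacks over $\mathcal{W}_{\dR}$ sharing the common underlying object $\mathcal{W}$. On such objects $\IndCoh$ is rigidly controlled by its value at $\mathcal{W}$ together with the formal-groupoid/pro-cotangent data (see~\cite{gaitsgory_study_2017}), and $!$-pullback is compatible with this description. Transporting the cosimplicial structure through these identifications, the \v{C}ech $\Tot$ of $\IndCoh((\mathcal{Z}_1^{\bullet/\mathcal{Z}_2})^\wedge_\mathcal{W})$ recovers $\IndCoh(\mathcal{Z}_2^\wedge_\mathcal{W})$, yielding the desired descent.

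\textbf{Main obstacle.} The crux is Step 3: establishing that the \v{C}ech nerve along the nil-isomorphism $\mathcal{Z}_1^\wedge_\mathcal{W}\to\mathcal{Z}_2^\wedge_\mathcal{W}$ is effective for $\IndCoh$ via $!$-pullback even though $f$ itself is not assumed to be surjective, flat, or schematic. The fact that makes this achievable is precisely that formal completion at a common $\mathcal{W}$ collapses every class of maps into the single regime of nil-isomorphisms over $\mathcal{W}_{\dR}$, where $\IndCoh$ is determined by infinitesimal data and the \v{C}ech nerve is automatically comonadic. Packaging this uniformly over the whole simplicial diagram — so that the comparison is an equivalence of cosimplicial DG categories and not merely a termwise identification — is the delicate part and where one must appeal carefully to the ind-inf-schematic $\IndCoh$ formalism.
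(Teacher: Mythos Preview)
The paper does not prove this proposition; it is quoted verbatim from Beraldo (the citation in the statement header), and no argument is given in the text beyond unpacking what the statement says. So there is no in-paper proof to compare your proposal against.

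On the merits of your attempt: your Step~1 relies on the fiber-product presentation of $\IndCoh_0(\mathcal{Z}^\wedge_\mathcal{W})$, but the paper is explicit that this presentation is only valid when $\mathcal{W}$ is \emph{bounded}; for general $\mathcal{W}\in\Stk_{\perf,\lfp}$ the definition of $\IndCoh_0$ is modified (Beraldo's Definition~4.1.6), and your reduction does not apply as written. Relatedly, the $?$-pullback in the proposition coincides with the $!$-pullback only in the bounded case; in general it is \emph{defined} as the right adjoint to $*$-pushforward (see the Remark immediately following the proposition), so your passage from $?$-descent for $\IndCoh_0$ to $!$-descent for $\IndCoh$ does not go through without further work. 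Thus your argument covers at most the bounded situation.

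Within that bounded regime, Steps~2--3 point in the right direction: completion at $\mathcal{W}$ does turn every map under $\mathcal{W}$ into a nil-isomorphism, and $\IndCoh$ with $!$-pullback satisfies descent along inf-schematic nil-isomorphisms by the Gaitsgory--Rozenblyum formalism. Your Step~3, however, is more a gesture toward that theory than an argument, and you would still need to check that the relevant maps are ind-inf-schematic (not automatic from being a nil-isomorphism of prestacks). Since the paper itself simply cites the result, there is no internal proof to hold you to; but be aware that your sketch does not reach the unbounded generality in which the proposition is stated and used.
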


\begin{rmk}
In the bounded case, the $?$-pullback is inherited from the $!$-pullback of $\IndCoh$, which coincides with the $!$-pullback of $\IndCoh_0$ discussed above. In general, it is defined to be the right adjoint to the $*$-pushforward functor discussed above, see~\cite{beraldo_center_2020}*{\S4.2.2}.
\end{rmk}

Let us spell out what this means. Consider morphisms of stacks $\mathcal{W} \to \mathcal{X} \to \mathcal{Y}$ in $\Stk_{\perf, \lfp}$. The \Cech{} construction and the $?$-pullback functoriality of $\IndCoh_0$ gives an augmented co-simplicial object in $\DGCat$
\[
	\IndCoh_0(\mathcal{Y}^\wedge_\mathcal{W}) \to \IndCoh_0((\mathcal{X}^{\times_\mathcal{Y} (\bullet + 1)})^\wedge_\mathcal{W}).
\]
The proposition above asserts that this induces an equivalence of categories
\[
	\IndCoh_0(\mathcal{Y}^\wedge_\mathcal{W}) \simeq \Tot(\IndCoh_0((\mathcal{X}^{\times_\mathcal{Y} (\bullet + 1)})^\wedge_\mathcal{W})). \teq\label{eq:descent_IndCoh_0}
\]

\subsubsection{}
We will use a ``dual'' version of this proposition. Recall the following result from~\cite{lurie_higher_2017-1}*{Corollary 5.5.3.4} (see also~\cite{gaitsgory_study_2017}*{Volume 1, Chapter 1, Proposition 2.5.7}). Suppose we have a diagram $\Phi: I \to \DGCat$ such that all maps admit left adjoints, i.e. we obtain a diagram $\Phi^L: I^{\opp} \to \DGCat$. Then, we have an equivalence of categories
\[
	\lim_{i\in I} \Phi(i) \simeq \colim_{i\in I^{\opp}} \Phi^L(i).
\]

Applying this to \cref{eq:descent_IndCoh_0}, we obtain the following statement.

\begin{cor} \label{cor:covariant_descent_for_IndCoh_0}
For any string of morphisms of stacks $\mathcal{W} \to \mathcal{X} \to \mathcal{Y}$ in $\Stk_{\perf, \lfp}$, we have a natural equivalence of categories
\[
	\IndCoh_0(\mathcal{Y}^\wedge_\mathcal{W}) \simeq |\IndCoh_0((\mathcal{X}^{\times_\mathcal{Y} (\bullet + 1)})^\wedge_\mathcal{W})|,
\]
where $|-|$ denotes geometric realization, i.e. colimit of a simplicial category. Moreover, the functors used in the simplicial structure are the $*$-pushforward functors.
\end{cor}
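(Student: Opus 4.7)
The plan is to deduce this corollary directly from \cref{prop:descent_for_IndCoh_0} by applying the standard passage from a limit of right adjoints to a colimit of left adjoints.

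First I would recall the descent statement spelled out in equation~\eqref{eq:descent_IndCoh_0}: \cref{prop:descent_for_IndCoh_0} gives the totalization
\[
    \IndCoh_0(\mathcal{Y}^\wedge_\mathcal{W}) \simeq \Tot\bigl(\IndCoh_0((\mathcal{X}^{\times_\mathcal{Y} (\bullet + 1)})^\wedge_\mathcal{W})\bigr),
\]
where the coface maps are the $?$-pullback functors associated to the face maps of the \v{C}ech simplicial object $\mathcal{X}^{\times_\mathcal{Y}(\bullet+1)}$. Note that each term of this \v{C}ech diagram lives in $\Stk_{\perf,\lfp}$ (since this class is closed under fiber products over a fixed base, which is all we use here), so the proposition applies term-by-term.

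Next I would invoke Lurie's result (\cite{lurie_higher_2017-1}*{Corollary 5.5.3.4}) quoted just above: a cosimplicial diagram in $\DGCat$ whose coface functors all admit left adjoints has the same limit as the colimit of the opposite simplicial diagram formed by these left adjoints. So the task reduces to identifying left adjoints for the $?$-pullback structure maps of our cosimplicial diagram. By the very definition of $?$-pullback recalled in the remark preceding the corollary, $?$-pullback along a morphism in $\Stk_{\perf,\lfp}$ (whose $\mathcal{Y}$-part is the identity, as is the case for the face maps of the \v{C}ech object) is the right adjoint of the $*$-pushforward functor from \S\ref{subsubsec:IndCoh_0_functoriality_explicit}. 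In particular left adjoints exist and are computed by $*$-pushforwards.

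Combining these two inputs yields
\[
    \IndCoh_0(\mathcal{Y}^\wedge_\mathcal{W}) \simeq \Tot\bigl(\IndCoh_0((\mathcal{X}^{\times_\mathcal{Y} (\bullet + 1)})^\wedge_\mathcal{W})\bigr) \simeq \bigl|\IndCoh_0((\mathcal{X}^{\times_\mathcal{Y} (\bullet + 1)})^\wedge_\mathcal{W})\bigr|,
\]
with simplicial structure given by $*$-pushforwards, which is exactly the claim.

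The only non-formal point is checking that passage to left adjoints is legitimate, i.e.\ that each coface map admits a left adjoint and that this left adjoint is the $*$-pushforward of \S\ref{subsubsec:IndCoh_0_functoriality_explicit}. In the bounded case this is essentially by definition (both left and right adjoints are inherited from $\IndCoh$-functoriality on the formal completions). In the unbounded/perfect case this is the content of the definition of $?$-pullback in~\cite{beraldo_center_2020}*{\S4.2.2}, so I would simply cite that; this is the step I expect to be the main (though still mild) obstacle, since the $*$-pushforward is constructed via the correspondence category $\Corr'((\Stk_{\perf,\lfp})^{\Delta^1})$ of \S\ref{subsubsec:functoriality_IndCoh_0} and one must verify that it is indeed defined on each structure morphism of the \v{C}ech diagram --- which is the case precisely because each such morphism has identity $\mathcal{Y}$-coordinate $\mathcal{W}\xrightarrow{=}\mathcal{W}$, matching the form of morphisms allowed in $\Corr'$.
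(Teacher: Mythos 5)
Your proposal is correct and matches the paper's argument exactly: the paper likewise obtains this corollary by applying the limit/colimit adjunction result (\cite{lurie_higher_2017-1}*{Corollary 5.5.3.4}), quoted immediately before the corollary, to the totalization in~\eqref{eq:descent_IndCoh_0}, with the left adjoints to the $?$-pullbacks identified with the $*$-pushforwards of \S\ref{subsubsec:IndCoh_0_functoriality_explicit}. Your extra remark about the \v{C}ech face maps having constant $\mathcal{W}$-coordinate, so that they are covered by $\Corr'((\Stk_{\perf,\lfp})^{\Delta^1})$, is a correct spelling-out of a point the paper leaves implicit.
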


\subsubsection{Relative tensor over $\QCoh$}
The formation of $\IndCoh_0$ behaves nicely with respect to tensoring over $\QCoh$.

\begin{prop} \label{prop:categorical_Kuenneth}
Consider a diagram $\mathcal{U} \to \mathcal{V} \to \mathcal{Z} \leftarrow \mathcal{Y} \leftarrow \mathcal{X}$ in $\Stk_{\perf, \lfp}$. Then, the exterior product descends to an equivalence
\[
	\IndCoh_0(\mathcal{V}^\wedge_{\mathcal{U}}) \otimes_{\QCoh(\mathcal{Z})} \IndCoh_0(\mathcal{Y}^\wedge_{\mathcal{X}}) \xrightarrow{\simeq} \IndCoh_0((\mathcal{V}\times_\mathcal{Z} \mathcal{Y})^\wedge_{\mathcal{U}\times_\mathcal{Z} \mathcal{X}}). \teq\label{eq:categorical_Kuenneth}
\]
\end{prop}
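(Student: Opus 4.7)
The plan is to construct the exterior product map and then verify it is an equivalence by combining the Cartesian-square definition of $\IndCoh_0$ with Künneth formulas for $\QCoh$ and $\IndCoh$. For the construction of the map, I would use that the assignment $(\mathcal{Y}\to\mathcal{Z})\mapsto \IndCoh_0(\mathcal{Z}^\wedge_\mathcal{Y})$ is (lax) symmetric monoidal as a functor out of $\Corr'((\Stk_{\perf,\lfp})^{\Delta^1})$; applied to the pair $(\mathcal{U}\to\mathcal{V})$ and $(\mathcal{X}\to\mathcal{Y})$, the external product yields
\[
	\IndCoh_0(\mathcal{V}^\wedge_\mathcal{U}) \otimes \IndCoh_0(\mathcal{Y}^\wedge_\mathcal{X}) \to \IndCoh_0\bigl((\mathcal{V}\times\mathcal{Y})^\wedge_{\mathcal{U}\times\mathcal{X}}\bigr),
\]
and $!$-restriction along the relative diagonal over $\mathcal{Z}$ lands in $\IndCoh_0((\mathcal{V}\times_\mathcal{Z}\mathcal{Y})^\wedge_{\mathcal{U}\times_\mathcal{Z}\mathcal{X}})$. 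The resulting bifunctor is $\QCoh(\mathcal{Z})$-bilinear and so descends to the relative tensor product over $\QCoh(\mathcal{Z})$.

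In the bounded case, to check that this map is an equivalence I would start from the Cartesian-square definition
\[
	\IndCoh_0(\mathcal{V}^\wedge_\mathcal{U}) \simeq \IndCoh(\mathcal{V}^\wedge_\mathcal{U}) \times_{\IndCoh(\mathcal{U})} \QCoh(\mathcal{U}),
\]
and the analogous one for $\IndCoh_0(\mathcal{Y}^\wedge_\mathcal{X})$. Since all four corners are dualizable as $\QCoh(\mathcal{Z})$-modules (each stack being perfect and lfp), the functor $-\otimes_{\QCoh(\mathcal{Z})}-$ preserves these fiber products in each variable. Each pairwise tensor of corners is then identified by a Künneth formula: $\QCoh(\mathcal{U})\otimes_{\QCoh(\mathcal{Z})}\QCoh(\mathcal{X}) \simeq \QCoh(\mathcal{U}\times_\mathcal{Z}\mathcal{X})$ for perfect stacks, together with the analogous statements for $\IndCoh$ over the $\QCoh(\mathcal{Z})$-base, and the compatibility $(\mathcal{V}^\wedge_\mathcal{U})\times_\mathcal{Z}(\mathcal{Y}^\wedge_\mathcal{X}) \simeq (\mathcal{V}\times_\mathcal{Z}\mathcal{Y})^\wedge_{\mathcal{U}\times_\mathcal{Z}\mathcal{X}}$, which is immediate from the fact that $(-)_\dR$ preserves fiber products. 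Reassembling produces exactly the Cartesian square defining $\IndCoh_0$ of the claimed target.

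For the general (possibly unbounded) case, I would reduce to the bounded case via covariant descent, \cref{cor:covariant_descent_for_IndCoh_0}: presenting $\mathcal{U}$ and $\mathcal{X}$ via bounded smooth atlases writes both sides as geometric realizations of simplicial categories whose terms fall under the bounded case. Since $-\otimes_{\QCoh(\mathcal{Z})}-$ commutes with colimits in each variable, the term-wise equivalences assemble to the required one. The main obstacle is establishing the Künneth formula for $\IndCoh$ relative to a $\QCoh(\mathcal{Z})$-base (rather than an $\IndCoh(\mathcal{Z})$-base), which is what controls the mixed corners of the tensored fiber square; this should follow from the perfect and lfp hypotheses on $\mathcal{Z}$ ensuring that $\IndCoh$ of the relevant formal completions is dualizable as a $\QCoh(\mathcal{Z})$-module, but is the only nontrivial input beyond formalities of fiber products and descent.
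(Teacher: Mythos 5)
Your map construction is fine and close in spirit to what the paper does. The verification, however, has a genuine gap. Your plan is to tensor the two Cartesian squares defining $\IndCoh_0$ corner by corner, and you flag the needed identification $\IndCoh(\mathcal{V}^\wedge_\mathcal{U}) \otimes_{\QCoh(\mathcal{Z})} \IndCoh(\mathcal{Y}^\wedge_\mathcal{X}) \simeq \IndCoh((\mathcal{V}\times_\mathcal{Z}\mathcal{Y})^\wedge_{\mathcal{U}\times_\mathcal{Z}\mathcal{X}})$ (and its variants for the other corners) as ``the only nontrivial input,'' claiming dualizability should give it. This is more than nontrivial: a K\"unneth formula for $\IndCoh$ over a $\QCoh$-base is simply false. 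Already for $\mathcal{Z} = \Spec k[x]/x^2$ and $\mathcal{U}=\mathcal{X}=\pt$, BZFN gives $\IndCoh(\pt) \otimes_{\QCoh(\mathcal{Z})} \IndCoh(\pt) = \mathsf{Vect} \otimes_{\QCoh(\mathcal{Z})} \mathsf{Vect} \simeq \QCoh(\pt\times_\mathcal{Z}\pt)$, while $\pt\times_\mathcal{Z}\pt$ is an unbounded affine scheme, so $\QCoh(\pt\times_\mathcal{Z}\pt) \not\simeq \IndCoh(\pt\times_\mathcal{Z}\pt)$. Dualizability only says $-\otimes_{\QCoh(\mathcal{Z})}-$ preserves the relevant limits; it does not, by itself, identify what those tensor products of corners are. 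Note also that the conclusion of the proposition is safe precisely because it is stated for $\IndCoh_0$, not $\IndCoh$: the $\QCoh$-leg of the defining Cartesian square corrects for this failure, but your corner-by-corner argument insists on the wrong $\IndCoh$-level identification.

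The paper's proof takes a different route that sidesteps the issue. Instead of expanding the Cartesian square, it factors the relative tensor as an absolute tensor followed by a base change: $\IndCoh_0(\mathcal{V}^\wedge_\mathcal{U}) \otimes_{\QCoh(\mathcal{Z})} \IndCoh_0(\mathcal{Y}^\wedge_\mathcal{X}) \simeq \bigl(\IndCoh_0(\mathcal{V}^\wedge_\mathcal{U}) \otimes \IndCoh_0(\mathcal{Y}^\wedge_\mathcal{X})\bigr) \otimes_{\QCoh(\mathcal{Z}\times\mathcal{Z})} \QCoh(\mathcal{Z})$, and then invokes the absolute $\IndCoh_0$-K\"unneth plus the $\QCoh$-base-change property of $\IndCoh_0$ (both from Beraldo's Proposition 4.5.5), together with BZFN's Theorem 4.7 for $\QCoh$ of perfect stacks. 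The point is that both steps are performed at the level of $\IndCoh_0$, where the absolute K\"unneth and the $\QCoh$-linear base change are genuinely available, whereas they are not available for plain $\IndCoh$ over a $\QCoh$-base. If you want to keep a fiber-square argument, it should be run absolutely first (over $\mathsf{Vect}$) where the $\IndCoh$-K\"unneth holds, and then base-changed as a package.
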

\begin{proof}
The LHS of \cref{eq:categorical_Kuenneth} can be rewritten as
\begin{align*}
	(\IndCoh&_0(\mathcal{V}^\wedge_{\mathcal{U}}) \otimes \IndCoh_0(\mathcal{Y}^\wedge_\mathcal{X})) \otimes_{\QCoh(\mathcal{Z})\otimes \QCoh(\mathcal{Z})} \QCoh(\mathcal{Z}) \\
	&\simeq \IndCoh_0((\mathcal{V} \times \mathcal{Y})^\wedge_{\mathcal{U} \times \mathcal{X}}) \otimes_{\QCoh(\mathcal{Z}\times \mathcal{Z})} \QCoh(\mathcal{Z}),
\end{align*}
where we used~\cite{beraldo_center_2020}*{Proposition 4.5.5} and~\cite{ben-zvi_integral_2010}*{Theorem 4.7}. But now, this is equivalent to the RHS of~\cref{eq:categorical_Kuenneth}, again by~\cite{beraldo_center_2020}*{Proposition 4.5.5}.
\end{proof}

\begin{rmk}
As written, \cite{beraldo_center_2020}*{Proposition 4.5.5} requires that the stacks involved are bounded. However, the proof does not use this fact. In fact, this result, without the bounded condition, is used in \cite{beraldo_center_2020}*{\S4.6.1}.

D. Beraldo communicated to us that even even perfectness could be relaxed. In fact, this is implicitly used in~\cite{beraldo_topological_2019}.
\end{rmk}

\subsection{Factorization homology} \label{subsec:prelim_fact_hom}

As mentioned above, factorization is the tool we use to formulate manifold gluing for spectral Eisenstein series. Roughly speaking, for each $\En_n$-algebra, factorization homology is a homology theory of $n$-dimensional manifolds which satisfy a multiplicative form of excision. We will now give a brief overview of the theory. The reader is referred to~\cite{ayala_factorization_2015} for a detailed treatment. 

\subsubsection{$\En_n$-algebras}
Let $\Disk_n$ denote the symmetric monoidal $\infty$-category whose objects are non-empty finite disjoint unions of $n$-dimensional disks and whose morphisms are given by the space of open embeddings with the compact-open topology. Let $\mathcal{V}$ be a $\otimes$-presentable symmetric monoidal category in the sense of~\cite{ayala_factorization_2015}*{Definition 3.4}. Namely, $\mathcal{V}$ is a presentable symmetric monoidal category such that $\otimes$ is continuous in each variable, i.e. it commutes with colimits in each variable. An $\En_n$-algebra $\mathsf{A}$ in $\mathcal{V}$ is, by definition, a symmetric monoidal functor $\mathsf{A}: \Disk_n \to \mathcal{V}$. Namely, the category of $\En_n$-algebras in $\mathcal{V}$ is given by\footnote{Strictly speaking, what we define here is the category of framed $\En_n$-algebras. See also~\cite{ayala_factorization_2015}*{Remarks after Example 2.11}.}
\[
	\EnAlg{n}(\mathcal{V}) \defeq \Fun^{\otimes}(\Disk_n, \mathcal{V}).
\]

Given an $\En_n$-algebra $\mathsf{A}$, we will usually write $\mathsf{A}$ to also denote its value on a single disk $D^n$.

\subsubsection{Manifolds}
We denote by $\Mnfd_n$ the symmetric monoidal category of $n$-dimensional manifolds which admit a good cover. Moreover, morphisms are given by the spaces of open embeddings. By a \emph{good cover}, we mean a finite cover by Euclidean spaces with the property that each non-empty intersection is itself homeomorphic to an Euclidean space. 

Note that any manifold admitting a good cover has the homotopy type of a finite CW-complex. Moreover, it is clear that $\Disk_n$ is a full subcategory of $\Mnfd_n$.

\subsubsection{Factorization homology}
Let $\mathsf{A} \in \EnAlg{n}(\mathcal{V}) = \Fun^{\otimes}(\Disk_n, \mathcal{V})$ be an $\En_n$-algebra in $\mathcal{V}$. Factorization homology is defined as the left Kan extension of $\mathsf{A}$ along the fully faithful embedding $\Disk_n \hookrightarrow \Mnfd_n$.

More concretely, for each $M \in \Mnfd_n$, we consider
\[
	(\Disk_n)_{/M} \defeq \Disk_n \times_{\Mnfd_n} (\Mnfd_{n})_{/M}.
\]
The factorization homology of $M$ with coefficients in $\mathsf{A}$ is given by
\[
	\int_M \mathsf{A} \defeq \colim_{(D^{\sqcup k} \hookrightarrow M) \in (\Disk_n)_{/M}} A(D)^{\otimes k}. \teq\label{eq:fact_hom_defn}	
\]

\subsubsection{$\otimes$-excision}
In practice, however, we usually don't use \cref{eq:fact_hom_defn} to compute factorization homology. Rather, we use the fact that factorization homology satisfies, and in fact, is characterized by, a multiplicative version of excision, which we will now recall.

\begin{defn} \label{defn:collar_gluing}
A \emph{collar gluing} of manifolds is a continuous map $f: M \to [-1, 1]$, such that the restriction $f|_{(-1, 1)}: M|_{(-1, 1)} \to (-1, 1)$ is a manifold bundle. We denote a collar gluing as $M_1 \cup_{M_0 \times \mathbb{R}} M_2 = M$, where $M_1 = f^{-1}([-1, 1))$, $M_2 = f^{-1}((-1, 1])$, and $M_0 = f^{-1}(0)$.
\end{defn}

\begin{defn} \label{defn:homology_theory}
A \emph{homology theory} for $n$-manifolds valued in $\mathcal{V}$ is a symmetric monoidal functor $\mathsf{E}: \Mnfd_n \to \mathcal{V}$ which satisfies $\otimes$-excision, i.e. for any collar gluing $M = M_1 \cup_{M_0\times \mathbb{R}} M_2$, the natural map
\[
	\mathsf{E}(M_1) \otimes_{\mathsf{E}(M_0 \times \mathbb{R})} \mathsf{E}(M_2) \to \mathsf{E}(M)
\]
is an equivalence. We denote by $\HH(\Mnfd_n, \mathcal{V})$ the full subcategory of $\Fun^{\otimes}(\Mnfd_n, \mathcal{V})$ spanned by functors which satisfy $\otimes$-excision.
\end{defn}

\begin{thm}[\cite{ayala_factorization_2015}] \label{thm:otimes_excision}
Let $\mathcal{V}$ be a symmetric monoidal $\infty$-category which is $\otimes$-presentable. Then, there are mutually inverse functors
\[
	\int: \EnAlg{n}(\mathcal{V}) \rightleftarrows \HH(\Mnfd_n, \mathcal{V}): \ev_{\Disk_n}
\]
between the category of $\En_n$-algebras in $\mathcal{V}$ and the category of homology theories valued in $\mathcal{V}$. Here $\ev_{\Disk_n}$ and $\int$ are given by restricting to and, respectively, left Kan extension along $\Disk_n \hookrightarrow \Mnfd_n$. The latter is, by definition, the functor of taking factorization homology.
\end{thm}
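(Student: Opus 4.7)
The functor $\int$ is defined as left Kan extension along $\Disk_n \hookrightarrow \Mnfd_n$ and $\ev_{\Disk_n}$ as restriction. By the universal property of left Kan extension, $\ev_{\Disk_n}\circ \int \simeq \id$ on $\EnAlg{n}(\mathcal{V})$ as soon as one knows that $\int$ lands in symmetric monoidal, $\otimes$-excisive functors. Conversely, one must show that every homology theory is (canonically, symmetric-monoidally) the left Kan extension of its restriction to disks. The work therefore splits into three geometric lemmas.

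\emph{Step 1 (symmetric monoidality of $\int$).} Given a disjoint union $M\sqcup M'$ I would show the natural comparison
\[
	\int_M \mathsf{A} \,\otimes\, \int_{M'}\mathsf{A} \;\longrightarrow\; \int_{M\sqcup M'}\mathsf{A}
\]
is an equivalence by reducing to a cofinality statement: the functor
\[
	(\Disk_n)_{/M}\times (\Disk_n)_{/M'} \longrightarrow (\Disk_n)_{/(M\sqcup M')}
\]
is cofinal, because any finite disjoint union of disks embedded in $M\sqcup M'$ admits a refinement which splits across the two components, and the relevant over-categories are filtered (in fact sifted) by isotopy.

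\emph{Step 2 ($\otimes$-excision).} For a collar gluing $M = M_1\cup_{M_0\times\mathbb{R}} M_2$, I would organize $(\Disk_n)_{/M}$ by the ``location'' of each disk component relative to the collar. The key geometric input, which I expect to be the main obstacle, is to show that the full subcategory of $(\Disk_n)_{/M}$ consisting of disjoint unions of disks each of which sits entirely in one of $M_1$, $M_0\times\mathbb{R}$, or $M_2$ is cofinal. This is an isotopy/refinement argument: any embedded disk meeting the collar can be replaced, up to isotopy, by a finite disjoint union of disks each contained in one of the three open pieces, by pushing along the flow associated to $f\colon M\to[-1,1]$. Granting this cofinality, the colimit defining $\int_M \mathsf{A}$ can be reindexed over a simplicial bar construction whose realization is precisely the relative tensor product $\int_{M_1}\mathsf{A} \otimes_{\int_{M_0\times\mathbb{R}}\mathsf{A}} \int_{M_2}\mathsf{A}$.

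\emph{Step 3 (every homology theory is left Kan extended from disks).} Let $\mathsf{E}\in \HH(\Mnfd_n,\mathcal{V})$ and set $\mathsf{A} \defeq \mathsf{E}|_{\Disk_n}$, which is tautologically an $\En_n$-algebra. The counit map $\int_M \mathsf{A}\to \mathsf{E}(M)$ is an equivalence for $M\in\Disk_n$. Every object of $\Mnfd_n$ admits a finite good cover, so I would induct on the number of elements in such a cover: the inductive step is a collar gluing of the form $(\bigcup_{i<k}U_i) \cup_{(\bigcup_{i<k}U_i)\cap U_k} U_k$, and by Step 2 both sides satisfy $\otimes$-excision with respect to it. The base case is handled by $\mathsf{E}$ and $\int(-)\mathsf{A}$ agreeing on disks.

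The main obstacle is the isotopy-cofinality statement in Step 2; everything else is formal manipulation of Kan extensions and induction on good covers once that geometric input is in place. This is the content of~\cite{ayala_factorization_2015}, to which I would refer for the full technical details.
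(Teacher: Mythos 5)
The paper offers no proof of this theorem --- it is stated verbatim with a citation to Ayala--Francis --- so there is no internal argument to compare against, and the right benchmark is the cited reference. As a sketch of that proof, your outline has the right architecture (show $\int$ lands in homology theories via disjoint-union compatibility and $\otimes$-excision; the unit of the adjunction is automatic from fully faithfulness of $\Disk_n \hookrightarrow \Mnfd_n$; prove the counit is an equivalence by induction on good covers), but two points need correcting.

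First, you write that the over-categories $(\Disk_n)_{/M}$ are ``filtered (in fact sifted) by isotopy.'' The implication runs the other way: filtered is strictly stronger than sifted, and $(\Disk_n)_{/M}$ is generally \emph{not} filtered, because its mapping spaces (spaces of embeddings) carry nontrivial topology. What Ayala--Francis actually prove --- and it is the central technical lemma of their paper, requiring a hypercover/localization argument rather than a quick isotopy observation --- is that (a localization of) $(\Disk_n)_{/M}$ is sifted, which is what is needed for the colimit manipulations in your Steps 1 and 2. Second, your Step 2 cofinality claim is true but insufficient on its own: knowing that disjoint unions of disks, each lying cleanly in $M_1$, $M_0\times\mathbb{R}$, or $M_2$, form a cofinal subcategory does not yet reorganize the colimit into a two-sided bar construction. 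One must additionally record the \emph{linear order} of disk components along the collar direction and identify that ordering data with the simplicial structure of the bar complex. Ayala--Francis package this step as a push-forward formula $\int_M \mathsf{A} \simeq \int_{[-1,1]} f_*\mathsf{A}$ along the collar map $f\colon M \to [-1,1]$, reducing to an explicit computation of one-dimensional factorization homology over the interval, which is where the bar complex actually appears. With those two corrections, the skeleton you propose does match the argument in the cited reference.
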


\subsubsection{Compact manifolds with boundary} \label{subsubsec:boundary_of_manifolds}
Even though our manifolds are, technically speaking, without boundary, we will make use of their ``boundary'' in our construction. We will now explain what this means.

Let $\Mnfd'_n$ be the category of compact $n$-manifolds with possibly non-empty boundary $\partial M$ such that its interior $\interior{M} = M \setminus \partial M$ admits a good cover. Moreover, morphisms in $\Mnfd'_n$ are given by (necessarily closed) embeddings. Taking the interior gives a natural functor of $\infty$-categories
\[
	F: \Mnfd'_n \to \Mnfd_n, \qquad M \mapsto \interior{M} = M \setminus \partial M.
\]

\begin{lem} \label{lem:open_manifold_vs_manifold_w_boundary}
The functor $F$ is an equivalence of categories. We write $M\mapsto \lbar{M}$ to denote an inverse of $F$.
\end{lem}
\begin{proof}
By the remarks after~\cite{ayala_factorization_2015}*{Definition 2.1}, each manifold $M\in \Mnfd_n$ is the interior of a compact manifold $\lbar{M}$. In other words, the functor $F$ is essentially surjective. It remains to show that $F$ is also fully faithful. Namely, for $M, N \in \Mnfd'_n$, we want to show that the following map is an equivalence
\[
	F_{M, N}: \Emb(M, N) \to \Emb(\interior{M}, \interior{N}).
\]

By the existence of collar neighborhoods,~\cites{brown_locally_1962,connelly_new_1971},\footnote{See also~\cite{baillif_collared_2020}*{Theorem 1.2} for a summary of results regarding the existence of collar neighborhoods.} for any $M\in \Mnfd'_n$, we have a pair of embeddings
\[
\begin{tikzcd}
	\interior{M} \ar[hookrightarrow, shift left=\arrdisp]{r}{\iota_M} & \ar[hookrightarrow, shift left=\arrdisp]{l}{\tilde{\iota}_M} M
\end{tikzcd}
\]
where $\iota_M$ is the canonical embedding, such that the compositions in both ways are isotopically equivalent to the identity maps. These maps induce the following pair of morphisms
\[
\begin{tikzcd}
	\Emb(M, N) \ar[shift left=\arrdisp]{r}{\iota_M^*} & \ar[shift left=\arrdisp]{l}{\tilde{\iota}_M^*} \Emb(\interior{M}, N)
\end{tikzcd}
\]
such that the compositions in both ways are homotopy equivalent to the identity maps. In particular, they are both homotopy equivalences. Similarly, we have
\[
\begin{tikzcd}
	\Emb(\interior{M}, N) \ar[shift left=\arrdisp]{r}{\tilde{\iota}_{N, *}} & \ar[shift left=\arrdisp]{l}{\iota_{N, *}} \Emb(\interior{M}, \interior{N})
\end{tikzcd}
\]
that are mutually inverse homotopy equivalences.

By construction, $\iota_M^*$ factors as follows
% This can be seen as follows: suppose there is a point n on the boundary of N that is the image of m \in \interior{M}. Then, there's a ball around m that maps homeomorphically only a ball around n. But there's no ball around n in N since n is on the boundary.
\[
\begin{tikzcd}
	\Emb(M, N) \ar{dr}{\simeq}[swap]{\iota_M^*} \ar{r}{F_{M, N}} & \Emb(\interior{M}, \interior{N}) \ar{d}{\iota_{N, *}}[swap]{\simeq} \\
	& \Emb(\interior{M}, N)
\end{tikzcd}
\]
Thus, $F_{M, N}$ is also a homotopy equivalence and we are done.
\end{proof}

\subsubsection{} \label{subsubsec:abuse_notation_boundary} Because of the equivalence stated in \cref{lem:open_manifold_vs_manifold_w_boundary}, throughout this paper, we will not make a distinction between manifolds without boundaries and compact manifolds with (possibly non-empty) boundaries, unless confusion is likely to occur. For instance, when $M\in \Mnfd_n$, by abuse of notation, we use 
\[
	\partial M \defeq \partial \lbar{M} \defeq \lbar{M} \setminus M
\]
to denote the boundary of $\lbar{M}$.

Let $\Disk_n' = \Disk_n \times_{\Mnfd_n} \Mnfd_n'$. We obtain an equivalence of symmetric monoidal categories $\Disk'_n \simeq \Disk_n$. Thus, an $\En_n$-algebra is, equivalently, a symmetric monoidal functor out of $\Disk_n'$.

\section{$\En_n$-Hecke categories and Eisenstein series}
In this section, we will construct the $\En_n$-Hecke category and compute its factorization homology on topological manifolds. More precisely, we start, in~\cref{subsec:construction_of_Eis}, with the construction of the functor $\Eis$, which is a symmetric monoidal functor out of $\Mnfd_n$. Its restriction to $\Disk_n$ gives an $\En_n$-category, the so-called $\En_n$-Hecke category, which is discussed in~\cref{subsec:En-Hecke}. The main theorem, which says that $\Eis$ is a homology theory in the sense of \cref{defn:homology_theory}, is stated and proved in~\cref{subsec:main_theorem}. Note that this is equivalent to saying that the value of $\Eis$ on an $n$-dimensional manifold $M$ is the factorization homology of the associated $\En_n$-Hecke category over $M$. 

In~\cref{subsec:BP_BG}, we specialize to the case of $\En_2$-Hecke categories $\Hecke_{G, P}$ as appearing in the Langlands program; the important point is the Hecke pair condition, which makes everything more explicit. And finally, in~\cref{subsec:non-compact_case}, we show that for non-compact manifolds, Eisenstein series can be defined by a simple set-theoretic support condition (as opposed to the appearance of the more complicated $\IndCoh_0$).

\subsection{The functor $\Eis$} \label{subsec:construction_of_Eis}
In this subsection, we will give the constructions of the main objects of this paper: Eisenstein series and $\En_n$-Hecke categories. Throughout we will fix a pair of stacks $\mathcal{Y} \to \mathcal{Z}$ such that both $\mathcal{Y}$ and $\mathcal{Z}$ are perfect and locally of finite presentation.
	
\begin{rmk} \label{rmk:perfectness_lfp_mapping_stacks}
These two conditions behave nicely with respect to forming derived mapping stacks. Indeed, fix a finite CW complex $M$ (of arbitrary dimension). Suppose $\mathcal{Y}$ is perfect, then so is $\mathcal{Y}^M$ by~\cite{ben-zvi_integral_2010}*{Corollary 3.25}. Moreover, if we are given $\mathcal{Y} \to \mathcal{Z}$ with $\mathcal{Y}$ and $\mathcal{Z}$ being perfect, then so is $\mathcal{Y}^N \times_{\mathcal{Z}^N} \mathcal{Z}^M$ for any map of finite CW complexes $N\to M$. Indeed, this is because being perfect is closed under fiber products~\cite{ben-zvi_integral_2010}*{Proposition 3.24}. 

The discussion above also applies when replacing the perfect condition with being locally of finite presentation via a simple cotangent complex calculation. 
\end{rmk}

\subsubsection{Eisenstein homology theory}
The goal is to construct a homology theory for $n$-dimensional manifolds in the sense of \cref{defn:homology_theory}
\[
	\Eis_n(\mathcal{Y}, \mathcal{Z}): \Mnfd_n \to \DGCat
\]
as a composition of three different functors
\[
\begin{tikzcd}
	\Mnfd_n \ar[bend right=12]{rrr}[swap]{\Eis_n(\mathcal{Y}, \mathcal{Z})} \ar{r}{B} & \Corr'((\Spc^{\Delta^1}_{\fin})^{\opp}) \ar{r}{\cMap} & \Corr'((\Stk_{\perf, \lfp})^{\Delta^1}) \ar{r}{\IndCoh_0} & \DGCat 
\end{tikzcd} \teq\label{eq:composition_Eis}
\]
We will now construct this functor as a symmetric monoidal functor. The proof that its a homology theory (which is equivalent to our main result, \cref{thm:main}) will be carried out in~\cref{subsec:main_theorem}.

\subsubsection{}
We start with a variant of the category of correspondences described in \cref{subsubsec:functoriality_IndCoh_0}. For any category $\mathcal{C}$, consider the $1$-full subcategory $\Corr'((\mathcal{C}^{\Delta^1})^{\opp})$ of $\Corr((\mathcal{C}^{\Delta^1})^{\opp})$ which consists of all objects but which morphisms are given by cospans of the form
\[
\begin{tikzcd}
	c_1 \ar{d} \ar{r} & c \ar{d} & c_2 \ar{d} \ar{l} \\
	d_1 \ar{r} & d \arrow[phantom]{ul}[very near start]{\lrcorner} & d_2 \ar{l}[swap]{\simeq}
\end{tikzcd}
\]
where, as indicated, the left square is a pushout and the map $d_2 \to d$ is an equivalence.

When $\mathcal{C} = \Spc_{\fin}$, the category $\Corr'((\mathcal{C}^{\Delta^1})^{\opp}) = \Corr'((\Spc^{\Delta^1}_{\fin})^{\opp})$ is the target of the functor $B$ in~\cref{eq:composition_Eis}, which will be now described.\footnote{$B$ stands for \emph{Boundary}.} At the level of objects (see~\cref{subsubsec:boundary_of_manifolds} for the notation),
\[
	B(M) = (\partial M \to \lbar{M}) \in \Spc_{\fin}^{\Delta^1}.
\]
Moreover, $B$ sends an open embedding $N \hookrightarrow M$ to the morphism in $\Corr'((\Spc^{\Delta^1}_{\fin})^{\opp})$ is given by the following diagram
\[
\begin{tikzcd}
	\partial N \ar{d} \ar{r} & \lbar{M} \setminus N \ar{d} & \partial M \ar{d} \ar{l} \\
	\lbar{N} \ar{r} & \lbar{M} \arrow[phantom]{ul}[very near start]{\lrcorner} & \lbar{M} \ar{l}[swap]{\simeq}
\end{tikzcd} \teq\label{eq:functoriality_defects}
\]

\subsubsection{} \label{subsubsec:cMap_defn}
We now turn to the functor $\cMap$ of~\cref{eq:composition_Eis}. We have a natural functor
\[
	\cMap: (\Spc^{\Delta^1})^{\opp} \to \Stk^{\Delta^1}
\]
which assigns to each object $(N \to M) \in (\Spc^{\Delta^1})^{\opp}$ an object $(\mathcal{Y}^M \to (\mathcal{Y}, \mathcal{Z})^{N, M}) \in \Stk^{\Delta^1}$. Here, $(\mathcal{Y}, \mathcal{Z})^{N, M} \defeq \mathcal{Y}^N \times_{\mathcal{Z}^N} \mathcal{Z}^M$ is precisely the stack of commutative squares
\[
\begin{tikzcd}
	N \ar{r} \ar{d} & \mathcal{Y} \ar{d} \\
	M \ar{r} & \mathcal{Z}
\end{tikzcd}
\]
Moreover, it is easy to see that this functor automatically upgrades to a functor
\[
	\cMap: \Corr'((\Spc^{\Delta^1})^{\opp}) \to \Corr'(\Stk^{\Delta^1}),
\]
and hence, the functor $\cMap$ of~\cref{eq:composition_Eis}. Here, the extra conditions such as $\perf$ and $\lfp$ on stacks are guaranteed to hold by \cref{rmk:perfectness_lfp_mapping_stacks}.

By construction, we know that $\cMap$ turns colimits in $\Spc^{\Delta^1}$ to limits in $\Stk^{\Delta^1}$.

\subsubsection{}
Finally, the functor $\IndCoh_0$ of~\cref{eq:composition_Eis} is given by~\cref{subsubsec:functoriality_IndCoh_0}.

\subsubsection{}
It is easy to see that $B$ and $\cMap$ are symmetric monoidal. Moreover, $\IndCoh_0$ is also symmetric monoidal, by \cref{prop:categorical_Kuenneth}. Thus, $\Eis_n = \IndCoh_0 \circ \cMap \circ B$ is symmetric monoidal.

\subsubsection{} For future reference, we note that for $M\in \Mnfd_n$, we have
\[
	\Eis_n(\mathcal{Y}, \mathcal{Z})(M) = \IndCoh_0\left(\left((\mathcal{Y}, \mathcal{Z})^{\partial M, \lbar{M}}\right)^{\wedge}_{\mathcal{Y}^{\lbar{M}}}\right) \simeq \IndCoh_0\left(\left((\mathcal{Y}, \mathcal{Z})^{\partial M, \lbar{M}}\right)^{\wedge}_{\mathcal{Y}^{\lbar{M}}}\right).
\]
Moreover, by abuse of notation (see also~\cref{subsubsec:abuse_notation_boundary}), we will sometimes write $(\partial M, M)$ in place of $(\partial \lbar{M}, \lbar{M})$. The above thus becomes
\[
	\Eis_n(\mathcal{Y}, \mathcal{Z})(M) = \IndCoh_0\left(\left((\mathcal{Y}, \mathcal{Z})^{\partial M, M}\right)^{\wedge}_{\mathcal{Y}^M}\right).
\]

\subsection{$\En_n$-Hecke categories}
\label{subsec:En-Hecke}
Let $\mathcal{Y} \to \mathcal{Z}$ be as above. Let $\Hecke_n(\mathcal{Y}, \mathcal{Z}): \Disk_n \to \DGCat$ be a symmetric monoidal functor obtained by restricting $\Eis_n(\mathcal{Y}, \mathcal{Z})$ along the fully faithful embedding $\Disk_n \hookrightarrow \Mnfd_n$. By definition, the value of $\Hecke_n(\mathcal{Y}, \mathcal{Z})$ on a $n$-dimensional disk $D^n$ is given by
\[
	\Hecke_n(\mathcal{Y}, \mathcal{Z})(D^n) = \IndCoh_0\left(\left((\mathcal{Y}, \mathcal{Z})^{S^{n-1}, D^n}\right)^\wedge_{\mathcal{Y}^{D^n}}\right) \simeq \IndCoh_0((\mathcal{Y}^{S^{n-1}} \times_{\mathcal{Z}^{S^{n-1}}} \mathcal{Z})^\wedge_{\mathcal{Y}}).
\]
As usual, we will use $\Hecke_n(\mathcal{Y}, \mathcal{Z})$ to denote $\Hecke_n(\mathcal{Y}, \mathcal{Z})(D^n)$.

For the reader's convenience, let us unwind the $\En_n$-monoidal structure. Note that the $\En_n$-monoidal structure is, by construction, induced by~\cref{eq:functoriality_defects}. Indeed, for each open embedding $\iota: (D^n)^{\sqcup k} \to D^n$, we have the following cospan in $\Spc^{\Delta^1}$
\[
\begin{tikzcd}
	(S^{n-1})^{\sqcup k} \ar{d} \ar{r} & D^n \setminus \iota((D^n)^{\sqcup k}) \ar{d} & S^{n-1} \ar{d} \ar{l} \\
	(D^n)^{\sqcup k} \ar{r}{\iota} & D^n \arrow[phantom]{ul}[very near start]{\lrcorner} & D^n \ar{l}[swap]{\simeq}
\end{tikzcd} \teq\label{eq:pair_of_pants}
\]
Now, as in~\cref{eq:composition_Eis}, applying $\cMap$, we obtain a correspondence, whose $\IndCoh_0$-pull-and-push gives the desired $\En_n$-multiplication structure. See \cref{fig:pair_of_pants} for an illustration.

\begin{figure}[htb]
\centering
\includegraphics[height=1in]{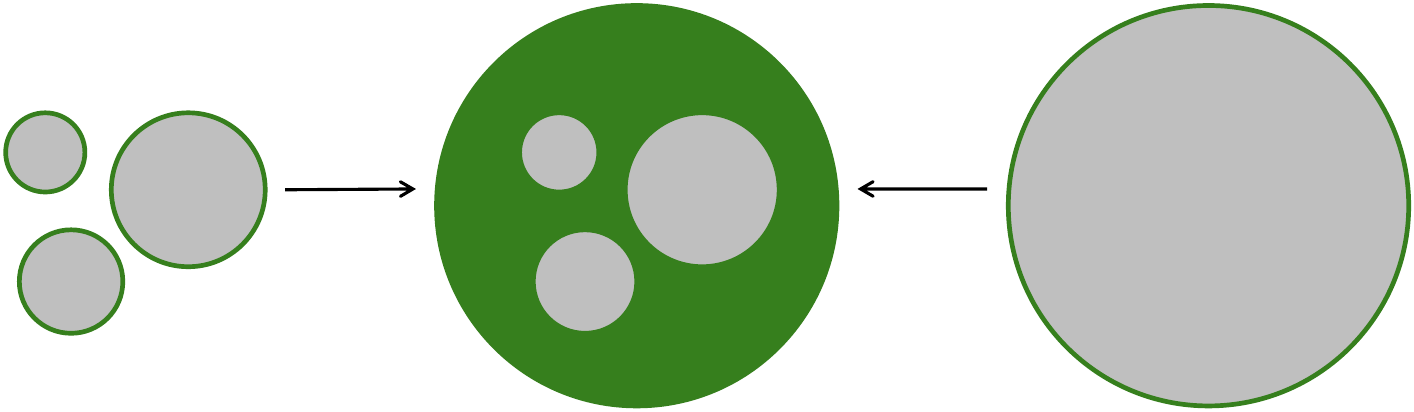}
\caption{$\En_n$-structure from the pair of pants construction.} \label{fig:pair_of_pants}
\begin{descriptionBox}
This figure illustrates~\cref{eq:pair_of_pants} when $k=3$ and $n=2$. Here, we collapse the columns of~\cref{eq:pair_of_pants}. The green parts represent the objects in top row of~\cref{eq:pair_of_pants} whereas the gray parts represent the parts of the bottom row that are not in the images of the top row.
\end{descriptionBox}
\end{figure}

\subsection{The main theorem} \label{subsec:main_theorem}
By the definition of factorization homology as a left Kan extension, for any $n$-dimensional manifold $M$, we have a natural map
\[
	\int_M \Hecke_n(\mathcal{Y}, \mathcal{Z}) \to \Eis_n(\mathcal{Y}, \mathcal{Z})(M).
\]
The rest of this subsection will be dedicated to the proof of our main theorem, which states that this map is an equivalence.

\begin{thm} \label{thm:main}
Let $\mathcal{Y} \to \mathcal{Z}$ be a morphism between stacks that are perfect and locally of finite presentation. Then, we have a natural equivalence
\[
	\int_M \Hecke_n(\mathcal{Y}, \mathcal{Z}) \simeq \Eis_n(\mathcal{Y}, \mathcal{Z})(M) = \IndCoh_0\left(\left((\mathcal{Y}, \mathcal{Z})^{\partial M, M}\right)^{\wedge}_{\mathcal{Y}^M}\right)
\]
for any topological manifold $M \in \Mnfd_n$.
\end{thm}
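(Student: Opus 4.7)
The plan is to apply the Ayala--Francis classification (\cref{thm:otimes_excision}). By construction, $\Eis_n(\mathcal{Y},\mathcal{Z}): \Mnfd_n \to \DGCat$ is a symmetric monoidal functor whose restriction along $\Disk_n \hookrightarrow \Mnfd_n$ is $\Hecke_n(\mathcal{Y},\mathcal{Z})$, so the theorem reduces to showing that $\Eis_n(\mathcal{Y},\mathcal{Z})$ satisfies $\otimes$-excision. For a collar gluing $M = M_1 \cup_{M_0 \times \mathbb{R}} M_2$, this amounts to producing a natural equivalence
\[
	\Eis_n(\mathcal{Y},\mathcal{Z})(M_1) \otimes_{\Eis_n(\mathcal{Y},\mathcal{Z})(M_0 \times \mathbb{R})} \Eis_n(\mathcal{Y},\mathcal{Z})(M_2) \xrightarrow{\simeq} \Eis_n(\mathcal{Y},\mathcal{Z})(M).
\]

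First, I would trace the collar gluing through the three factors $B$, $\cMap$, $\IndCoh_0$ of $\Eis_n$. After compactification, $\lbar{M}$ is a pushout of $\lbar{M}_1$ and $\lbar{M}_2$ along $\overline{M_0 \times \mathbb{R}}$, and the boundaries assemble so that the pair $(\partial M, \lbar{M})$ is itself a pushout in $\Spc_{\fin}^{\Delta^1}$ of the three pieces. Since $\cMap$ turns colimits in $\Spc^{\Delta^1}$ into limits in $\Stk^{\Delta^1}$, one obtains a Cartesian square
\[
	(\mathcal{Y},\mathcal{Z})^{\partial M, M} \simeq (\mathcal{Y},\mathcal{Z})^{\partial M_1, M_1} \times_{(\mathcal{Y},\mathcal{Z})^{\partial (M_0 \times \mathbb{R}), M_0 \times \mathbb{R}}} (\mathcal{Y},\mathcal{Z})^{\partial M_2, M_2}
\]
together with the analogous decomposition $\mathcal{Y}^M \simeq \mathcal{Y}^{M_1} \times_{\mathcal{Y}^{M_0 \times \mathbb{R}}} \mathcal{Y}^{M_2}$ of the stack along which formal completions are formed. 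All stacks that appear remain in $\Stk_{\perf, \lfp}$ by \cref{rmk:perfectness_lfp_mapping_stacks}.

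To convert this Cartesian square of pairs into the desired tensor product identity on $\IndCoh_0$, I would present both sides as geometric realizations of bar-style simplicial diagrams and match them termwise. The right-hand side admits such a description via covariant descent for $\IndCoh_0$ (\cref{cor:covariant_descent_for_IndCoh_0}) applied to the atlas
\[
	(\mathcal{Y},\mathcal{Z})^{\partial M_1, M_1} \sqcup (\mathcal{Y},\mathcal{Z})^{\partial M_2, M_2} \to (\mathcal{Y},\mathcal{Z})^{\partial M, M},
\]
whose \Cech{} nerve is built from iterated fiber products over $(\mathcal{Y},\mathcal{Z})^{\partial(M_0 \times \mathbb{R}), M_0 \times \mathbb{R}}$. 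The left-hand side is, by definition, the geometric realization of the two-sided bar construction computing the relative tensor product. The relative Kunneth formula (\cref{prop:categorical_Kuenneth}) identifies the two simplicial objects termwise, and passing to colimits yields the desired equivalence.

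The main obstacle I anticipate is precisely this identification of the two simplicial objects, since \cref{prop:categorical_Kuenneth} tensors over $\QCoh$ of an intermediate base rather than directly over $\Eis_n(\mathcal{Y},\mathcal{Z})(M_0 \times \mathbb{R})$, which is an $\IndCoh_0$-category. Verifying that the simplicial structure maps produced by covariant descent agree with those of the bar resolution under the Kunneth identification, and that the formal completions behave compatibly with the pullback squares at each simplicial degree, is the heart of the argument. In the absence of the Hecke pair condition of \cref{subsubsec:Hecke_pair_condition}, one also has to live with potentially unbounded intermediate stacks; since the descent and Kunneth statements are available in the $\perf, \lfp$ setting this is not an essential difficulty, but one must check that the $?$-pullbacks and $*$-pushforwards used at each step stay within the established $\IndCoh_0$ formalism.
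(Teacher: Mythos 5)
Your macro-level plan is correct: reduce to $\otimes$-excision via \cref{thm:otimes_excision}, then match a bar resolution of the relative tensor product against a simplicial resolution of $\Eis_n(\mathcal{Y},\mathcal{Z})(M)$ coming from covariant descent for $\IndCoh_0$, identifying the terms with \cref{prop:categorical_Kuenneth}. However, there are two genuine gaps in the middle of the argument.

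First, the atlas you propose does not exist. The composite $\cMap$ is contravariant, so a morphism $(N',M')\to(N,M)$ in $\Spc^{\Delta^1}$ produces a morphism $(\mathcal{Y},\mathcal{Z})^{N,M}\to(\mathcal{Y},\mathcal{Z})^{N',M'}$ of stacks, not the other way. There is no map $(\mathcal{Y},\mathcal{Z})^{\partial M_i, M_i}\to(\mathcal{Y},\mathcal{Z})^{\partial M, M}$: you would need to extend a $\mathcal{Z}$-map on $\lbar{M}_i$ to all of $\lbar{M}$. The paper does not descend along a Mayer--Vietoris-style cover of $M$ by $M_1$ and $M_2$ at all. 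Instead it forms the co\v{C}ech nerve of a single morphism $\eta\colon (\partial M, M)\to(\partial M_1\sqcup_{M_0}\partial M_2, M)$ in $\Spc^{\Delta^1}$ which keeps the underlying manifold $M$ fixed and only enlarges the subspace carrying a $\mathcal{Y}$-lift (adding a ``defect'' along $M_0$). After applying $\cMap$ this becomes a single map $(\mathcal{Y},\mathcal{Z})^{\partial M_1\sqcup_{M_0}\partial M_2, M}\to(\mathcal{Y},\mathcal{Z})^{\partial M, M}$ that merely forgets the extra $\mathcal{Y}$-lift over $M_0$, and \cref{cor:covariant_descent_for_IndCoh_0} applies along its \v{C}ech nerve. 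This is structurally quite different from the cover you wrote, and your atlas cannot be repaired by reversing the arrows: the resulting restriction maps are not surjective and do not give a \v{C}ech nerve with the right shape.

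Second, you correctly single out the mismatch between the $\QCoh$-base in \cref{prop:categorical_Kuenneth} and the $\IndCoh_0$-algebra $\Eis_n(\mathcal{Y},\mathcal{Z})(M_0\times\mathbb{R})$ as ``the heart of the argument,'' but you leave it unresolved, and it is precisely here that the paper's key trick lives. The paper works \emph{relatively} over the symmetric monoidal category $\mathcal{B}=\QCoh(\mathcal{Y}^{M_0})$, which admits a monoidal functor to $\Eis_n(\mathcal{Y},\mathcal{Z})(M_0\times\mathbb{R})$ by $*$-pushforward, and observes that the $\mathcal{B}$-module structures on all three $\Eis$-categories are given purely by $!$-pullback. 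The relative tensor product is then computed by the $\mathcal{B}$-relative bar construction $\Eis(M_1)\otimes_{\mathcal{B}}\Eis(M_0\times\mathbb{R})^{\otimes_{\mathcal{B}}\bullet}\otimes_{\mathcal{B}}\Eis(M_2)$, whose terms, being tensors over $\QCoh(\mathcal{Y}^{M_0})$, are \emph{exactly} the inputs to \cref{prop:categorical_Kuenneth}, and these match termwise with the $\IndCoh_0\circ\cMap$ of the co\v{C}ech nerve of $\eta$. Without passing to this $\mathcal{B}$-relative bar complex, the terms of your bar resolution are tensor products over $\Eis_n(\mathcal{Y},\mathcal{Z})(M_0\times\mathbb{R})$ itself, which are not accessible to the K\"unneth formula, so the termwise identification you plan to carry out has no available tool.
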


For the remainder of \cref{subsec:main_theorem}, to keep the notation less cluttered, we will write $\Hecke$ and $\Eis$ in place of $\Hecke_n(\mathcal{Y}, \mathcal{Z})$ and $\Eis_n(\mathcal{Y}, \mathcal{Z})$ respectively, with $\mathcal{Y} \to \mathcal{Z}$ a fixed morphism of stacks with $\mathcal{Y}$ and $\mathcal{Z}$ being perfect and locally of finite presentation.

\subsubsection{Homology theory}
By \cref{thm:otimes_excision}, to prove \cref{thm:main}, it suffices to show that $\Eis$ is a homology theory, i.e. that it satisfies $\otimes$-excision. More explicitly, let $M = M_1 \cup_{M_0 \times \mathbb{R}} M_2$ be a collar gluing in the sense of \cref{defn:collar_gluing}. We want to show that the following natural map is an equivalence
\[
	\Eis(M_1) \otimes_{\Eis(M_0 \times \mathbb{R})} \Eis(M_2) \xrightarrow{\simeq} \Eis(M). \teq\label{eq:Eis_satisfies_otimes_excision}
\]

The algebra (i.e. $\En_1$-monoidal) structure of $\Eis(M_0 \times \mathbb{R})$ as well as the module structures of $\Eis(M_1)$ and $\Eis(M_2)$ over it are induced by ``cylinder stacking.'' Unwinding the definition, we see that these structures are obtained via pulling and pushing through a correspondence induced by diagrams of the form \cref{eq:functoriality_defects}. 

In what follows, we will work relatively over a symmetric monoidal category $\mathcal{B}$. This effectively ``absorbs'' the first square of \cref{eq:functoriality_defects} so that the algebra and module structures only involve the second square of \cref{eq:functoriality_defects}. In terms of sheaves, this means that our structures only involve pushforward rather than both pushforward and pullback.\footnote{This technique has been used in many places to overcome similar technical difficulties, for example \cites{ben-zvi_character_2009,ben-zvi_betti_2021,beraldo_topological_2019}.}

\subsubsection{Working relatively}
Let $\mathcal{B} = \IndCoh_0((\mathcal{Y}^{M_0\times \mathbb{R}})^\wedge_{\mathcal{Y}^{M_0\times \mathbb{R}}}) \simeq \QCoh(\mathcal{Y}^{M_0 \times \mathbb{R}}) \simeq \QCoh(\mathcal{Y}^{M_0})$ be equipped with the standard symmetric monoidal structure. A diagram chase shows that we have a monoidal functor $\QCoh(\mathcal{Y}^{M_0}) \to \Eis(M_0 \times \mathbb{R})$ given by $\IndCoh_0$ $*$-pushforward along (see~\cref{subsubsec:IndCoh_0_functoriality_explicit})
\[
\begin{tikzcd}
	\mathcal{Y}^{M_0} \ar{d} \ar{r} & \mathcal{Y}^{M_0} \ar{d} \\
	\mathcal{Y}^{M_0} \ar{r} & (\mathcal{Y}, \mathcal{Z})^{\partial(M_0 \times \mathbb{R}), M_0 \times \mathbb{R}}
\end{tikzcd}
\]

This induces right and left $\mathcal{B}$-module structures on $\Eis(M_0\times \mathbb{R})$, a right $\mathcal{B}$-module structure on $\Eis(M_1)$, and a left $\mathcal{B}$-module structure on $\Eis(M_2)$. For $W=M_1$, $M_0\times \mathbb{R}$ or $M_2$, the module structure on $\Eis(W)$ is canonically identified with the $\IndCoh_0$ $!$-pullback along $(\mathcal{Y}, \mathcal{Z})^{\partial W, W} \to (\mathcal{Y}, \mathcal{Z})^{\partial W, W} \times \mathcal{Y}^{M_0}$ induced by $(\partial W \sqcup M_0, W\sqcup M_0) \to (\partial W, W)$. Note that in the case where $W = M_0\times \mathbb{R}$, there are two possible inclusions $M_0\to \partial(M_0\times \mathbb{R}) = \partial W$, corresponding to the two module structures given by left and right multiplications.

\begin{figure}[htb]
\centering
\includegraphics[height=1in]{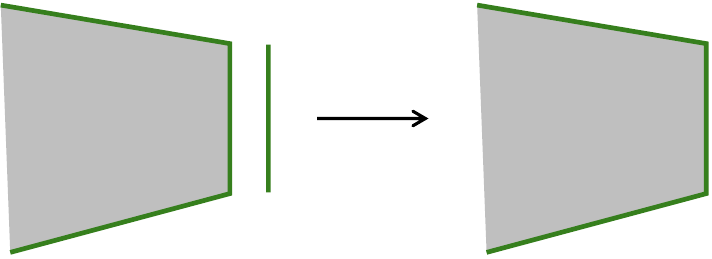}
\caption{$\mathcal{B}$-module structure on $\Eis(M_1)$.} \label{fig:module_structure}
\begin{descriptionBox}
This figure illustrates the map $(\partial M_1 \sqcup M_0, M_1 \sqcup M_0) \to (\partial M_1 , M_1)$ that induces the $\mathcal{B}$-module structure on $\Eis(M_1)$. Here, the green vertical line on the left represents $M_0$. The map sends it to the vertical segment of $\partial M_1$ on the right. 
\end{descriptionBox}
\end{figure}

\subsubsection{Relative bar complex}
We have the following augmented simplicial category
\[
	\Eis(M_1)\otimes_{\mathcal{B}} \Eis(M_0\times \mathbb{R})^{\otimes_{\mathcal{B}} \bullet} \otimes_{\mathcal{B}} \Eis(M_2) \to \Eis(M), \teq\label{eq:augmented_simplicial}
\]
where $\Eis(M)$ is the augmentation, i.e. it lives in simplicial degree $-1$. Moreover, the geometric realization of the LHS of~\cref{eq:augmented_simplicial} computes the LHS of~\cref{eq:Eis_satisfies_otimes_excision}. Namely, we have an equivalence
\[
	|\Eis(M_1)\otimes_{\mathcal{B}} \Eis(M_0\times \mathbb{R})^{\otimes_{\mathcal{B}} \bullet} \otimes_{\mathcal{B}} \Eis(M_2)| \simeq \Eis(M_1) \otimes_{\Eis(M_0\times \mathbb{R})} \Eis(M_2).
\]

The terms on the LHS of~\cref{eq:augmented_simplicial} can be easily computed. Indeed, by \cref{prop:categorical_Kuenneth}, we have
\begin{align*}
	\Eis(&M_1)\otimes_{\mathcal{B}} \Eis(M_0\times \mathbb{R})^{\otimes_{\mathcal{B}} k} \otimes_{\mathcal{B}} \Eis(M_2) \\
	&\simeq \IndCoh_0\left(\left((\mathcal{Y}, \mathcal{Z})^{\partial M_1 \sqcup_{M_0} \partial (M_0\times \mathbb{R})^{\sqcup_{M_0} k} \sqcup_{M_0} \partial M_2, M_1 \sqcup_{M_0} (M_0\times \mathbb{R})^{\sqcup_{M_0} k} \sqcup_{M_0} M_2}\right)^\wedge_{\mathcal{Y}^{M_1 \sqcup_{M_0} (M_0\times \mathbb{R})^{\sqcup_{M_0} k} \sqcup_{M_0} M_2}}\right) \\
	&\simeq \IndCoh_0\left(\left((\mathcal{Y}, \mathcal{Z})^{\partial M_1 \sqcup_{M_0} \partial (M_0\times \mathbb{R})^{\sqcup_{M_0} k} \sqcup_{M_0} \partial M_2, M}\right)^\wedge_{\mathcal{Y}^{M}}\right).
\end{align*}
Here, for the first equivalence, we also use the fact that the $(\mathcal{Y}, \mathcal{Z})^{-, -}$ construction (see~\cref{subsubsec:cMap_defn}), turns colimits to limits.

It remains to show that
\[
	\left|\IndCoh_0\left(\left((\mathcal{Y}, \mathcal{Z})^{\partial M_1 \sqcup_{M_0} \partial (M_0\times \mathbb{R})^{\sqcup_{M_0} \bullet} \sqcup_{M_0} \partial M_2, M}\right)^\wedge_{\mathcal{Y}^{M}}\right)\right| \to \IndCoh_0\left(\left((\mathcal{Y}, \mathcal{Z})^{\partial M, M}\right)^\wedge_{\mathcal{Y}^M}\right) \defeq \Eis(M) \teq\label{eq:final_equivalence}
\]
is an equivalence.

\subsubsection{An alternative description of the simplicial object} 
It is easy to see an alternative way to obtain the simplicial category on the far left of~\cref{eq:final_equivalence}. Indeed, consider the following morphism $\eta: (\partial M , M) \to (\partial M_1 \sqcup_{M_0} \partial M_2, M)$ in $\Spc^{\Delta^1}$ 
\[
\begin{tikzcd}
	\partial M_1 \sqcup_{M_0} \partial M_2 \ar{d} & \partial M \ar{l} \ar{d} \\
	M & M \ar{l}[swap]{\simeq}
\end{tikzcd}
\]
Let $\coCechNv^\bullet(\eta)$ be the \coCech{} nerve of this morphism, which is a co-simplicial object in $\Spc^{\Delta^1}_{\fin}$. Applying the $(\mathcal{Y}, \mathcal{Z})^{-, -}$ construction (see~\cref{subsubsec:cMap_defn}) and $\IndCoh_0$ (using the $\IndCoh_0$ $*$-pushforward), we obtain precisely the simplicial category appearing on the far left of~\cref{eq:final_equivalence}.

\begin{figure}[htb]
\centering
\includegraphics[height=0.9in]{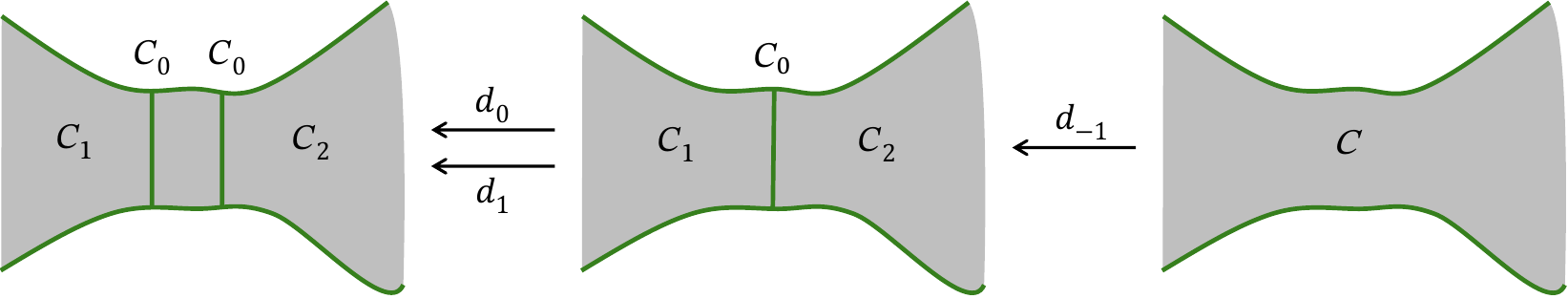}
\caption{\coCech{} resolution.} \label{fig:coCech}

\begin{descriptionBox}
This figure illustrates the first two steps of the \coCech{} nerve of the morphism $(\partial M , M) \to (\partial M_1 \sqcup_{M_0} \partial M_2, M)$ in $\Spc^{\Delta^1}$. The two items on the left represent the zero-th and first steps in the co-simplicial resolution whereas the item on the right is the co-augmentation. 
\end{descriptionBox}
\end{figure}

Now, by descent of $\IndCoh_0$, \cref{cor:covariant_descent_for_IndCoh_0}, we obtain that the morphism in~\cref{eq:final_equivalence} is an equivalence. This concludes the proof of the main theorem,~\cref{thm:main}.

\subsection{The case of $BP \to BG$} \label{subsec:BP_BG}
As mentioned in the introduction~\cref{subsubsec:Hecke_pair_condition}, the sheaf theory $\IndCoh_0$ is much simpler in the case of bounded stacks. In this subsection, we specialize to the case of $\En_2$-Hecke categories and formulate the Hecke pair condition. The Hecke pair condition is designed precisely to make sure that all stacks appearing in the proof of \cref{thm:main} are perfect, of finite presentation, and bounded. The crucial point, for us, is that the case $BP\to BG$ satisfies the Hecke pair condition.

\subsubsection{Hecke pair}
We start with the definition of a Hecke pair.

\begin{defn}[Hecke pair] \label{defn:Hecke_pair}
A pair of stacks $\mathcal{Y}$ and $\mathcal{Z}$ (see \cref{subsubsec:prelim_stacks} for our conventions regarding stacks) equipped with a morphism $\mathcal{Y} \to \mathcal{Z}$ is said to be a \emph{Hecke pair} if the following conditions are satisfied:
\begin{myenum}{(\roman*)}
	\item $\mathcal{Y}$ and $\mathcal{Z}$ are perfect and locally of finite presentation;
	\item for any finite CW complex $M$ of dimension at most 2, $\mathcal{Y}^M$ and $\mathcal{Z}^M$ are bounded; and
	\item for any open embedding of $2$-dimensional manifolds $N \to M$, $(\mathcal{Y}, \mathcal{Z})^{\lbar{M} \setminus N, \lbar{M}}$ is bounded.
\end{myenum}
\end{defn}

By \cref{rmk:perfectness_lfp_mapping_stacks}, all stacks that appear in the proof of \cref{thm:main} are already perfect and locally of finite presentation. The last two conditions of \cref{defn:Hecke_pair} guarantee that these stacks are also bounded.

\subsubsection{$BP\to BG$ is a Hecke pair} 
The main case of interest to us indeed satisfies this condition.
\begin{lem}
For any homomorphism of affine algebraic group $H\to G$, $BH\to BG$ is a Hecke pair.
\end{lem}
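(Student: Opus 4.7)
The plan is to verify the three conditions of \cref{defn:Hecke_pair} in order. Condition (i) is routine: the cotangent complex $\LL_{BG} \simeq \mathfrak{g}^\vee[-1]$ is perfect since $\mathfrak{g}$ is finite-dimensional, so $BG$ is lfp (similarly for $BH$), while perfectness of $BH$ for any affine algebraic group $H$ in characteristic zero is established in~\cite{ben-zvi_integral_2010}.

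For conditions (ii) and (iii), the central idea is to build the relevant finite CW complex (or CW pair) cell by cell using cells of dimension at most $2$, and show that each attachment preserves boundedness. Attaching a $d$-cell with $d \leq 2$ to a CW complex $W$ translates, on mapping stacks, to a pullback of $BG^W$ against $BG^{D^d} \simeq BG$ over $BG^{S^{d-1}}$, where for $d = 0$ one interprets $S^{-1} = \emptyset$ so that $BG^\emptyset = \pt$. The crucial observation is that these intermediate stacks are smooth of finite Krull dimension for $d \leq 2$: they are $\pt$, $BG \times BG$, and the adjoint quotient $G/G$, respectively. The argument would then invoke the general fact that a fiber product of bounded stacks over a smooth stack of finite Krull dimension remains bounded---since $\mathcal{O}_X \otimes_{\mathcal{O}_Z}^L \mathcal{O}_Y$ is computed via a finite Koszul-type resolution of one factor over $\mathcal{O}_Z$, keeping Tor-amplitude bounded. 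Starting from the bounded base cases ($BH^{\pt} = BH$ for (ii), and $(BH, BG)^{K, K} \simeq BH^K$ for (iii), where $K = \lbar{M} \setminus N$ and $L = \lbar{M}$), iterated cell attachments of dimension $\leq 2$ then yield boundedness of $BH^M$, $BG^M$, and $(BH, BG)^{K, L}$.

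The main obstacle---and the reason for the dimension restriction baked into the Hecke pair condition---is that boundedness is \emph{not} preserved under general fiber products, as the paper explicitly notes and as witnessed by the unbounded example $(\mathbb{A}^1)^{S^2}$ mentioned in the introduction. The entire argument leans on the smoothness of $BG^{S^{d-1}}$ for $d \leq 2$, which fails at $d = 3$: already $BG^{S^2}$ is quasi-smooth but not smooth. This is precisely why the Hecke pair condition is tailored to manifolds of dimension at most $2$.
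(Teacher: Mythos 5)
Your proof is correct and reaches the same conclusion, but it differs from the paper's argument in two ways worth noting. First, the paper does not work with boundedness directly; instead it inductively establishes \emph{quasi-smoothness}, a strictly stronger property that is manifestly stable under base change, and then deduces boundedness at the very end. Concretely: the paper first shows $BG^M$ is \emph{smooth} for $M$ of dimension $\leq 1$ (Lemma~\ref{lem:BG_cMap_smooth_CW_dim1}, via the explicit presentation of such $M$ as disjoint unions of points and wedges of circles), then shows $BG^M$ is \emph{quasi-smooth} for $M$ of dimension $\leq 2$ (Lemma~\ref{lem:BC_cMap_qsmooth_CW_dim2}, by the same $2$-cell attachment pullback you describe, using that $BG \to BG^{S^1}$ is quasi-smooth because both are smooth). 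This avoids invoking the Tor-amplitude/regularity argument you use to pass boundedness across fiber products over smooth bases---a statement that is true but requires some care for stacks, whereas base change of quasi-smooth morphisms is immediate.

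Second, for condition (iii) the paper does \emph{not} run a relative cell-attachment induction on the pair $(\lbar{M}\setminus N, \lbar{M})$. It instead observes that when $M$ is connected (WLOG), the compact manifold-with-boundary $\lbar{M}\setminus N$ has the homotopy type of a $1$-dimensional CW complex, so by the dimension-$\leq 1$ smoothness lemma, both $BG^{\lbar{M}\setminus N}$ and $BH^{\lbar{M}\setminus N}$ are smooth; since $BG^{\lbar{M}}$ is quasi-smooth by (ii), the map $BG^{\lbar{M}} \to BG^{\lbar{M}\setminus N}$ is quasi-smooth, hence so is its base change $(BH, BG)^{\lbar{M}\setminus N, \lbar{M}} \to BH^{\lbar{M}\setminus N}$, giving quasi-smoothness of the total stack in a single step. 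Your relative CW-pair induction is more uniform across (ii) and (iii) and requires no dimension bookkeeping for $\lbar{M}\setminus N$, at the cost of needing the Tor-amplitude lemma. Both arguments are valid; yours establishes precisely the boundedness the definition asks for, while the paper's establishes the finer quasi-smoothness.
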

\begin{proof}
It's clear that $BH$ and $BG$ are locally of finite presentation. Moreover, they are perfect, by~\cite{ben-zvi_integral_2010}.

\cref{lem:BC_cMap_qsmooth_CW_dim2,lem:BHBG_cMap_qsmooth} below show that the two last conditions of \cref{defn:Hecke_pair} are also satisfied and the proof is completed.
\end{proof}

\begin{lem} \label{lem:BG_cMap_smooth_CW_dim1}
For any affine algebraic group $G$ and any finite CW complex $M$ of dimension at most $1$, $BG^{M}$ is smooth.
\end{lem}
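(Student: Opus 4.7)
The plan is to induct on the number of $1$-cells of $M$, exploiting the fact that the derived mapping stack functor $BG^{(-)}$ turns homotopy pushouts of spaces into homotopy pullbacks of stacks, combined with smoothness of the diagonal $\Delta \colon BG \to BG \times BG$.

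For the base case, when $M$ is $0$-dimensional, $M$ is a finite disjoint union of points, so $BG^M \simeq (BG)^{|\pi_0 M|}$, which is smooth since $G$ is a smooth affine algebraic group (we work in characteristic $0$).

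For the inductive step, I would write $M$ as a homotopy pushout $M \simeq M' \sqcup_{S^0} D^1$, where $M'$ is obtained from $M$ by removing a single $1$-cell and the attaching map $S^0 \hookrightarrow M'$ lies in the $0$-skeleton. Applying $BG^{(-)}$, this pushout becomes a pullback
\[
    BG^M \;\simeq\; BG^{M'} \times_{BG^{S^0}} BG^{D^1} \;\simeq\; BG^{M'} \times_{BG\times BG} BG,
\]
with the right-hand morphism being the diagonal $\Delta$. A direct computation using the atlas $\pt \to BG\times BG$ yields $BG \times_{BG\times BG} \pt \simeq (G\times G)/G \simeq G$, a smooth affine scheme, so $\Delta$ is smooth and representable. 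Base change preserves smoothness, so $BG^M \to BG^{M'}$ is smooth, and combined with the inductive hypothesis that $BG^{M'}$ is smooth, this gives smoothness of $BG^M$.

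The one subtlety to verify is that the derived enhancement of $BG^M$ is trivial, so that \emph{smooth} in the derived and classical senses agree. This is part of the induction: the base case stacks are classical, and smooth representable morphisms are in particular flat, so base change of a classical stack along $\Delta$ remains classical. I do not anticipate any serious obstacle; the argument distills to the elementary fact that the diagonal of $BG$ is smooth, together with compatibility of the mapping-stack construction with finite homotopy colimits of $M$.
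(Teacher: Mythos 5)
Your proof is correct, and it takes a genuinely different route from the paper. The paper invokes the classification of finite $1$-dimensional CW complexes up to homotopy — namely, that any such $M$ is homotopy equivalent to a finite disjoint union of points and wedges of circles — and then identifies $BG^M$ directly as a finite product of stacks of the form $BG$ and $G^g/G$ (iterated fiber products of $\frac{G}{G}$ over $BG$), all of which are visibly smooth. You instead induct on the number of $1$-cells, writing $M \simeq M' \sqcup_{S^0} D^1$ and observing that $BG^M \to BG^{M'}$ is the base change of the diagonal $\Delta_{BG}\colon BG \to BG\times BG$, a smooth representable morphism (with fiber $G$, smooth in characteristic $0$). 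Both arguments ultimately reduce to smoothness of essentially the same elementary quotient stacks, but yours is more modular: it avoids appealing to the classification theorem for graphs and instead isolates the single geometric fact — smoothness of $\Delta_{BG}$ — that drives the induction. The paper's argument is shorter and produces an explicit global description of $BG^M$, which is used elsewhere (e.g., in the proof of \cref{prop:closed_embedding_non-compact_M}). Your observation that smooth (hence flat) base change preserves classicality is the right way to handle the derived-vs-classical subtlety; in fact the cotangent complex can be tracked through the same induction, since $\mathbb{L}_{\Delta_{BG}}$ is concentrated in degree $0$ and perfect.
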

\begin{proof}
Note that any such $M$ is homotopy equivalent to a finite disjoint union of points and wedges of circles. Thus, $BG^M$ is a finite product of stacks of the forms $BG$ and $\frac{G}{G} \times_{BG} \cdots \times_{BG} \frac{G}{G}$ where $\frac{G}{G}$ is the stack quotient of $G$ by itself via the conjugation action. These are smooth and hence, we are done.
\end{proof}

\begin{lem} \label{lem:BC_cMap_qsmooth_CW_dim2}
For any affine algebraic group $G$ and any finite CW complex $M$ of dimension at most $2$, $BG^M$ is quasi-smooth in the sense of~\cite{arinkin_singular_2015}. In particular, $BG^M$ is bounded.
\end{lem}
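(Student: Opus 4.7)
The plan is to compute the tangent complex of $BG^M$ at an arbitrary point and verify directly that it satisfies the quasi-smoothness condition of~\cite{arinkin_singular_2015}. Concretely, using the standard formula for the tangent complex of a derived mapping stack (which follows from the adjunction defining $\cMap$ and base change for cotangent complexes), at a point $\rho\colon M \to BG$ we have
\[
    T_\rho BG^{M} \simeq R\Gamma(M, \rho^{*}T_{BG}) \simeq R\Gamma(M, \mathfrak{g}_\rho)[1],
\]
where $\mathfrak{g}_\rho$ is the local system of Lie algebras on $M$ obtained by pulling back $\mathfrak{g} = T_{BG}[-1]$ along $\rho$, and $M$ is regarded as a constant stack so that $R\Gamma$ is simply cochains of the underlying finite CW complex with coefficients in this local system.

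Since $\dim M \leq 2$ and $\mathfrak{g}$ is finite-dimensional, $R\Gamma(M, \mathfrak{g}_\rho)$ is perfect and concentrated in cohomological degrees $[0, 2]$. Shifting by $[1]$, the tangent (and dually cotangent) complex of $BG^M$ is perfect of Tor-amplitude $[-1, 1]$; this is precisely the quasi-smoothness condition for Artin $1$-stacks, where the upper range $+1$ reflects the stacky direction of $BG$. The ``in particular'' clause then follows from the general fact that a quasi-smooth Artin stack is bounded: passing to a smooth atlas, one reduces to the case of a quasi-smooth affine scheme, which is locally a finite Koszul complex over a smooth classical base and hence lives in finitely many cohomological degrees.

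An alternative approach, parallel in spirit to \cref{lem:BG_cMap_smooth_CW_dim1}, would be induction on the cell structure. Filtering $M$ by its skeleta and using that $\cMap$ turns pushouts of spaces into fiber products of stacks, each $2$-cell attachment yields $BG^{M^{(k)}} \simeq BG^{M^{(k-1)}} \times_{BG^{S^{1}}} BG$. Starting from the smooth stack $BG^{M^{(1)}}$, and noting that both $BG$ and $BG^{S^{1}} \simeq G/G$ are smooth, the cofiber sequence for the cotangent complex of a fiber product propagates the Tor-amplitude bound $[-1,1]$ through the induction. The main obstacle in either approach is purely bookkeeping: keeping track of Tor-amplitudes and of the convention that quasi-smoothness for $1$-stacks allows Tor-amplitude $[-1, 1]$ rather than the $[-1, 0]$ range familiar from the scheme case.
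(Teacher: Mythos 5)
Your second (``alternative'') approach is precisely the proof in the paper: induct on the cell structure, use $BG^{M^{(k)}} \simeq BG^{M^{(k-1)}} \times_{BG^{S^1}} BG$, note that $BG$ and $BG^{S^1}\simeq G/G$ are smooth, and conclude that each successive attachment is a quasi-smooth base change. Your primary approach via the tangent complex is a genuinely different (and also correct) route. It is arguably more direct: it identifies $T_\rho BG^M \simeq C^*(M,\mathfrak{g}_\rho)[1]$, which for $\dim M\le 2$ is perfect and concentrated in $[-1,1]$, and deduces quasi-smoothness from this. What this buys is a more conceptual picture of \emph{why} the statement is true (the dimension bound on $M$ literally controls the cohomological amplitude), at the cost of having to be a little careful about two points that the inductive proof gets for free: (i) quasi-smoothness in the sense of Arinkin--Gaitsgory is defined via a quasi-smooth affine atlas, so one must translate the Tor-amplitude bound $[-1,1]$ on $L_{BG^M}$ into quasi-smoothness of an atlas (this works precisely because $BG^M$ is an Artin $1$-stack, so the relative cotangent complex of a smooth atlas sits in degree $0$, and a derived affine scheme has connective cotangent complex); and (ii) one must know that $L_{BG^M}$ is perfect and check Tor-amplitude on fibers over field-valued points, which is legitimate since $BG^M$ is locally of finite presentation (\cref{rmk:perfectness_lfp_mapping_stacks}). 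Both routes are sound; the paper's inductive argument avoids these tangent-complex subtleties by staying inside the class of quasi-smooth maps.
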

\begin{proof}
We prove this inductively based on the CW presentation of $M$. When $M$ is at most $1$-dimensional, this is already done in the previous lemma. Now, $M$ is built up inductively from pushout diagrams of the following form
\[
\begin{tikzcd}
	S^1 \ar{d} \ar{r} & M' \ar{d} \\
	D^2 \ar{r} & M
\end{tikzcd}
\]
This gives the following pullback square
\[
\begin{tikzcd}
	BG^M \ar{r} \ar{d}{f} & BG \ar{d}{g} \\
	BG^{M'} \ar{r} & BG^{S^1}
\end{tikzcd}
\]
Since $BG$ and $BG^{S^1} \simeq \frac{G}{G}$ are smooth, $g$ is a quasi-smooth map. Thus, so is $f$. By inductive hypothesis, $BG^{M'}$ is quasi-smooth. Thus, so is $BG^M$. 
\end{proof}

\begin{lem} \label{lem:BHBG_cMap_qsmooth}
For any homomorphism of affine algebraic groups $H \to G$ and any open embedding of $2$-dimensional manifolds $N\to M$, $(BH, BG)^{\lbar{M}\setminus N, \lbar{M}}$ is quasi-smooth, and hence, bounded.
\end{lem}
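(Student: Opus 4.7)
The plan is to mimic the inductive CW argument of \cref{lem:BC_cMap_qsmooth_CW_dim2}, applied to the pair $K \hookrightarrow L$ where $K = \lbar{M}\setminus N$ and $L = \lbar{M}$. Writing
\[
    (BH, BG)^{K, L} \simeq BH^K \times_{BG^K} BG^L,
\]
the strategy is to build $L$ out of $K$ by attaching cells of dimension at most $2$; after replacing $(L, K)$ with a homotopy-equivalent CW pair if necessary, we may assume $K$ is a finite CW subcomplex of $L$.

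The key local input is that for each $d \in \{0, 1, 2\}$, the restriction map $BG^{D^d} \to BG^{S^{d-1}}$ is quasi-smooth. For $d = 0$ this is $BG \to \pt$, which is smooth; for $d = 1$ this is the diagonal $BG \to BG \times BG$, which is smooth with fiber $G = \Omega BG$; and for $d = 2$ this is the constant-loop section $BG \to G/G \simeq BG^{S^1}$, which equivariantly realizes the regular closed embedding $\{1\} \hookrightarrow G$ and is therefore quasi-smooth of virtual codimension $\dim G$.

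The induction then proceeds as follows. The base case $L_0 = K$ yields $(BH, BG)^{K, K} \simeq BH^K$, which is quasi-smooth by \cref{lem:BC_cMap_qsmooth_CW_dim2}. For the inductive step, attaching a $d$-cell to $L_n$ along some $\phi: S^{d-1} \to L_n$ produces $L_{n+1}$ with $BG^{L_{n+1}} \simeq BG^{L_n} \times_{BG^{S^{d-1}}} BG^{D^d}$; by associativity of fiber products, this gives
\[
    (BH, BG)^{K, L_{n+1}} \simeq (BH, BG)^{K, L_n} \times_{BG^{S^{d-1}}} BG^{D^d},
\]
exhibiting the left-hand side as the base change of the quasi-smooth map $BG^{D^d} \to BG^{S^{d-1}}$ along the restriction $(BH, BG)^{K, L_n} \to BG^{S^{d-1}}$. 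Hence quasi-smoothness propagates, and after finitely many attachments we conclude. Bounded-ness then follows from quasi-smoothness by~\cite{arinkin_singular_2015}.

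The main technical hurdle I anticipate is the reduction to a genuine CW-pair structure on $(\lbar{M}, \lbar{M}\setminus N)$: while $\lbar{M}$ is a compact topological $2$-manifold with boundary and hence has the homotopy type of a finite CW complex, showing that $\lbar{M}\setminus N$ can be modeled (up to homotopy) as a subcomplex requires either choosing a triangulation adapted to a regular neighborhood of $N$ or invoking a CW-approximation for pairs. Once this step is in place, the cellular induction is routine.
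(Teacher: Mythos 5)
Your proof is correct but takes a genuinely different route from the paper's. The paper does not perform a cellular induction on the pair $(\lbar{M}, \lbar{M}\setminus N)$. Instead it uses the single pullback square
\[
\begin{tikzcd}
	(BH, BG)^{\lbar{M}\setminus N, \lbar{M}} \ar{r} \ar{d}{f} & BG^{\lbar{M}} \ar{d}{g} \\
	BH^{\lbar{M}\setminus N} \ar{r} & BG^{\lbar{M}\setminus N}
\end{tikzcd}
\]
together with the topological observation that, after reducing to $M$ connected, $\lbar{M}\setminus N$ has the homotopy type of a CW complex of dimension at most~$1$. By \cref{lem:BG_cMap_smooth_CW_dim1} both $BH^{\lbar{M}\setminus N}$ and $BG^{\lbar{M}\setminus N}$ are then \emph{smooth}, and $BG^{\lbar{M}}$ is quasi-smooth by \cref{lem:BC_cMap_qsmooth_CW_dim2}; hence $g$ (a map from a quasi-smooth stack to a smooth one) is quasi-smooth, so $f$ is quasi-smooth by base change, and the source is quasi-smooth because the base $BH^{\lbar{M}\setminus N}$ is smooth. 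Your argument replaces this with a finer, cell-by-cell reduction: you identify the local quasi-smooth maps $BG^{D^d}\to BG^{S^{d-1}}$ for $d\le 2$ and propagate quasi-smoothness through the attachment tower via base change. This is valid, avoids the $1$-dimensional observation entirely, and makes visible exactly why the argument stops at $n=2$ (since $BG^{D^3}\to BG^{S^2}$ is no longer quasi-smooth). The trade-off is precisely the hurdle you flag: you must justify that $(\lbar{M},\lbar{M}\setminus N)$ can be replaced (up to homotopy of pairs) by a finite CW pair with relative cells of dimension at most~$2$. This is true for surfaces (e.g.\ by a triangulation of $\lbar{M}$ adapted to $\lbar{M}\setminus N$), but it is nontrivial, and a generic CW-approximation of the pair would \emph{not} automatically produce relative cells of dimension $\le 2$ (a mapping-cylinder construction, for instance, raises dimension). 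The paper's route is shorter precisely because it sidesteps this issue.
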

\begin{proof}
By definition, we have the following pullback square
\[
\begin{tikzcd}
	(BH, BG)^{\lbar{M}\setminus N, \lbar{M}} \ar{r} \ar{d}{f} & BG^{\lbar{M}} \ar{d}{g} \\
	BH^{\lbar{M}\setminus N} \ar{r} & BG^{\lbar{M}\setminus N}
\end{tikzcd}
\]

Without loss of generality, we can assume that $M$ (and hence, also $\lbar{M}$) is connected. Then, $\lbar{M} \setminus N$ has the homotopy type of a CW complex of dimension at most $1$. By \cref{lem:BG_cMap_smooth_CW_dim1}, $BG^{\lbar{M}\setminus N}$ and $BH^{\lbar{M}\setminus N}$ are smooth and by \cref{lem:BC_cMap_qsmooth_CW_dim2} $BG^{\lbar{M}}$ is quasi-smooth. Thus, $g$ is quasi-smooth. Hence, so is $f$. But then, this implies that $(BH, BG)^{\lbar{M} \setminus N, \lbar{M}}$ is also quasi-smooth and we are done.
\end{proof}

\subsubsection{A nil-isomorphism}
The pair $BP \to BG$ in fact has another simplifying property, namely, the natural map $BP \to (BP, BG)^{S^1, D^2}$ is a nil-isomorphism, i.e. the corresponding morphism between de Rham prestacks is an isomorphism. Indeed,
\[
	(BP, BG)^{S^1, D^2} \simeq {\textstyle\frac{P}{P}} \times_{\frac{G}{G}} BG \simeq {\textstyle{\frac{P}{P}}} \times_{\frac{G}{P}} BP \simeq \mathfrak{n}^*_P[-1]/P, \teq\label{eq:computation_of_H_GP}
\]
where $\mathfrak{n}_P^*$ is the linear dual of the nilpotent radical of the Lie algebra of $P$. The underlying de Rham prestack of this is simply $BP_{\dR}$.

By~\cref{subsubsec:IndCoh_0_special_niliso}, we see that
\begin{align*}
	\Hecke_2(BP, BG) 
	&\defeq \IndCoh_0\left(\left((BP, BG)^{S^1, D^2}\right)^\wedge_{BP}\right) \\
	&\simeq \IndCoh((BP, BG)^{S^1, D^2}) \\
	&\simeq \IndCoh(\LocSys_{G, P}(D^2, S^1)) \teq\label{eq:E_2_Hecke_just_IndCoh}
\end{align*}
which is precisely $\Hecke_{G, P}$ appearing in the introduction,~\cref{subsubsec:intro:Hecke_cat}. Note that the important point is that at the local level of a disk, $\IndCoh_0$ does not make an appearance! However, $\IndCoh_0$ appears naturally after taking factorization homology.

From the discussion above, we thus obtain the following corollary of \cref{thm:main}.

\begin{cor} \label{cor:main_Eis}
For any topological surface $M$ (with possibly non-empty boundary), we have
\[
	\int_M \Hecke_{G, P} \simeq \IndCoh_0\left(\left((BP, BG)^{\partial M, M}\right)^{\wedge}_{BP^M}\right) \simeq \IndCoh_0\left(\LocSys_{G, P}(M, \partial M)^\wedge_{\LocSys_P(M)}\right) \simeq \Eis_{G, P}(M).
\]
\end{cor}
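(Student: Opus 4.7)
The plan is to deduce this corollary directly from the main theorem, \cref{thm:main}, specialized to the pair $\mathcal{Y} = BP$ and $\mathcal{Z} = BG$, with $n = 2$. The hypotheses of \cref{thm:main} require $\mathcal{Y}$ and $\mathcal{Z}$ to be perfect and locally of finite presentation; both properties are immediate for $BP$ and $BG$ (the former from~\cite{ben-zvi_integral_2010} and the latter from a cotangent computation), and in fact the preceding lemmas establish the stronger statement that $BP \to BG$ is a Hecke pair, so all intermediate stacks in the proof of \cref{thm:main} stay bounded.

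With the hypotheses verified, \cref{thm:main} immediately yields
\[
    \int_M \Hecke_2(BP, BG) \simeq \IndCoh_0\left(\left((BP, BG)^{\partial M, M}\right)^{\wedge}_{BP^M}\right),
\]
which gives the first equivalence in the corollary. The second equivalence is a matter of unpacking notation: by the definition introduced in \cref{subsubsec:intro:Hecke_cat}, we have $(BP, BG)^{\partial M, M} = BP^{\partial M} \times_{BG^{\partial M}} BG^M = \LocSys_{G, P}(M, \partial M)$ and $BP^M = \LocSys_P(M)$.

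It remains to identify $\Hecke_2(BP, BG)$ with $\Hecke_{G, P}$. This is precisely the content of the nil-isomorphism computation in \cref{eq:E_2_Hecke_just_IndCoh}: one checks that $BP \to (BP, BG)^{S^1, D^2} \simeq \mathfrak{n}^*_P[-1]/P$ is a nil-isomorphism, and since $BP$ is smooth, the general principle recalled in \cref{subsubsec:IndCoh_0_special_niliso} reduces $\IndCoh_0$ of the completion to $\IndCoh$ of the target. Hence
\[
    \Hecke_2(BP, BG) \simeq \IndCoh\left((BP, BG)^{S^1, D^2}\right) \simeq \IndCoh(\LocSys_{G, P}(D^2, S^1)) \simeq \Hecke_{G, P},
\]
and combining the three equivalences yields the corollary.

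Since each step is already isolated as a result in the paper, there is no substantive obstacle — the only thing to be careful about is that the $\En_2$-structures on both sides match under the identification $\Hecke_2(BP, BG) \simeq \Hecke_{G, P}$, but this is automatic because the equivalence comes from the functoriality of $\IndCoh_0$ applied to the nil-isomorphism, which commutes with the pair-of-pants correspondences defining the $\En_2$-structure.
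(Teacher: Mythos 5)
Your proof is correct and follows the same route as the paper: apply \cref{thm:main} with $\mathcal{Y}=BP$, $\mathcal{Z}=BG$, $n=2$, unwind the $\LocSys$ notation, and identify $\Hecke_2(BP,BG)$ with $\Hecke_{G,P}$ via the nil-isomorphism computation of~\cref{eq:E_2_Hecke_just_IndCoh}. Your final remark about the compatibility of the $\En_2$-structures is a point the paper passes over silently, and your justification — that the identification arises from $\IndCoh_0$-functoriality applied to the nil-isomorphism, which is natural in the pair-of-pants correspondences — is the right way to see it.
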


\subsection{Eisenstein series on a non-compact surface} \label{subsec:non-compact_case}
We see in~\cref{eq:E_2_Hecke_just_IndCoh} that
\[
	\Eis_{G, P}(D^2) = \Hecke_{G, P} \simeq \IndCoh(\LocSys_{G, P}(D^2, S^1)).
\]
In particular, it says that Eisenstein series for a $2$-dimensional disk only involve $\IndCoh$ rather than the more complicated $\IndCoh_0$. In this subsection, we show a similar statement for non-compact topological surfaces. More precisely, for a non-compact topological surface $M$, we will show that $\Eis_{G, P}(M)$ is naturally a full-subcategory of $\IndCoh(\LocSys_{G, P})(M, \partial M)$. The key point is given by the following result.

\begin{prop} \label{prop:closed_embedding_non-compact_M}
Let $G$ be an affine algebraic group, $H$ a closed subgroup, and $M$ a non-compact surface. Then, the natural map
\[
	BH^M \simeq \LocSys_H(M) \to \LocSys_{G, H}(M, \partial M) \simeq BH^{\partial M} \times_{BG^{\partial M}} BG^M
\]
is a closed embedding of stacks. 
\end{prop}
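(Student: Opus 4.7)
The plan is to work relatively over $BG^{\lbar{M}}$. Both $BH^{\lbar{M}}$ and $BH^{\partial M}\times_{BG^{\partial M}} BG^{\lbar{M}}$ project canonically to $BG^{\lbar{M}}$---the source via the pointwise change-of-group $BH \to BG$, the target via its second factor---and the map in the statement is a morphism over $BG^{\lbar{M}}$. After base change along an affine test $\rho \colon T \to BG^{\lbar{M}}$ classifying a $G$-local system on $\lbar{M} \times T$, this map becomes the restriction-of-sections map for the associated $(G/H)$-fibration $E_\rho \defeq \rho \times^G (G/H)$, from $\lbar{M}\times T$ down to $\partial M \times T$. Because being a closed embedding is smooth-local on the target, it suffices to establish that this restriction is a closed embedding for every such $\rho$.

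Next I would use that, since $M$ is non-compact, $\lbar{M}$ is a compact $2$-manifold with non-empty $\partial M$, and so by a handle decomposition (equivalently, Morse theory with boundary) $\lbar{M}$ is, up to homotopy, a relative CW complex over $\partial M$ obtained by attaching finitely many cells of dimension $1$ or $2$. Write this filtration as $\partial M = \lbar{M}_0 \subset \cdots \subset \lbar{M}_N = \lbar{M}$, with each $\lbar{M}_i$ obtained from $\lbar{M}_{i-1}$ by a single $k_i$-cell attachment along some $S^{k_i-1}\to \lbar{M}_{i-1}$. The derived mapping stack construction (and hence the sections functor) converts each cellular pushout into a Cartesian square. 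Since $D^{k_i}$ is contractible, $E_\rho|_{D^{k_i}}$ is trivializable, and the induced trivialization on the attaching sphere identifies the $i$-th step's vertical map with the constant-section inclusion $G/H \hookrightarrow \cMap(S^{k_i-1}, G/H)$. By composition and base change, the full restriction is closed as soon as this constant-section inclusion is closed for $k_i \in \{1,2\}$.

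For $k_i = 1$, the constant-section inclusion is the diagonal $G/H \to G/H \times G/H$; for $k_i = 2$, it is the constant-loops map $G/H \to (G/H)^{S^1} \simeq G/H \times_{G/H \times G/H} G/H$, realized as the inclusion of the classical truncation into the derived self-intersection of the diagonal. Since $G$ is an affine algebraic group and $H$ a closed subgroup, $G/H$ is a separated (in fact quasi-projective) scheme, so the diagonal $G/H \to G/H \times G/H$ is a closed embedding, and the constant-loops map is likewise a closed embedding as an inclusion of a classical truncation into a derived scheme. This completes the plan. The most delicate point, which I would verify with care, is the compatibility of the cellular decomposition of $(\lbar{M},\partial M)$ with the derived sections functor in the presence of the twist $\rho$; granting this, the argument reduces to the separatedness of $G/H$.
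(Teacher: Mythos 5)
Your proposal takes a genuinely different route from the paper's, and once the one flagged point is patched it works. The paper proceeds by a bespoke pushout decomposition of the pair $(\partial M, \lbar{M})$ into two ``faces'' and a ``skeleton'' (see \cref{fig:punctured_surface}), computes the resulting quotient stacks explicitly ($\frac{H^g}{H}$, $\frac{G^g}{H}$, etc.), and reduces closedness to the facts that $H\hookrightarrow G$ is a closed embedding and $BH\to BG$ is separated. You instead use a relative handle decomposition of $(\lbar{M},\partial M)$ with only $1$- and $2$-handles, reducing the statement to two universal cell cases; both arguments bottom out in the separatedness of $G/H$, but yours organizes the induction more systematically and generalizes readily.

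The ``delicate point'' you flag --- compatibility of the cell structure with the twist $\rho$ --- is most cleanly handled by running the induction at the level of stacks, never trivializing. At the $i$-th step the pushout $\lbar{M}_i=\lbar{M}_{i-1}\sqcup_{S^{k_i-1}}D^{k_i}$ exhibits
\[
BH^{\lbar{M}_i}\longrightarrow BH^{\lbar{M}_{i-1}}\times_{BG^{\lbar{M}_{i-1}}}BG^{\lbar{M}_i}
\]
as the base change of the universal map $BH^{D^{k_i}}\to (BH,BG)^{S^{k_i-1},D^{k_i}}$ along $BH^{\lbar{M}_{i-1}}\to BH^{S^{k_i-1}}$. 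Composing these and base-changing along $BG^{\lbar{M}}\to BG^{\lbar{M}_i}$ gives the desired closed embedding once the two universal maps are shown closed. For $k=1$ this is the relative diagonal $BH\to BH\times_{BG}BH$, closed precisely because $G/H$ is separated; for $k=2$ it is $BH\to \frac{H}{H}\times_{\frac{G}{G}}BG$, the inclusion of the classical truncation of a derived stack, which is always closed. This is the stack-level shadow of your fiberwise reduction to $G/H\hookrightarrow\cMap(S^{k-1},G/H)$, and it makes the quantifier ``for every $\rho$'' and the question of trivializing $E_\rho$ simply disappear. With that adjustment your plan is complete and yields the proposition.
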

\begin{proof}
The proof can be best visualized using \cref{fig:punctured_surface} where the case of the thrice-punctured surface of genus $4$ is illustrated. To start, note that any punctured surface is homotopy equivalent to something of same form as the bottom right of \cref{fig:punctured_surface}.\footnote{Note that by~\cref{subsubsec:abuse_notation_boundary}, we are really thinking about the associated compact surface with boundary.} The important point is that the boundary is a string of circles.

\begin{figure}[htb]
	\centering
	\includegraphics[height=2.6in]{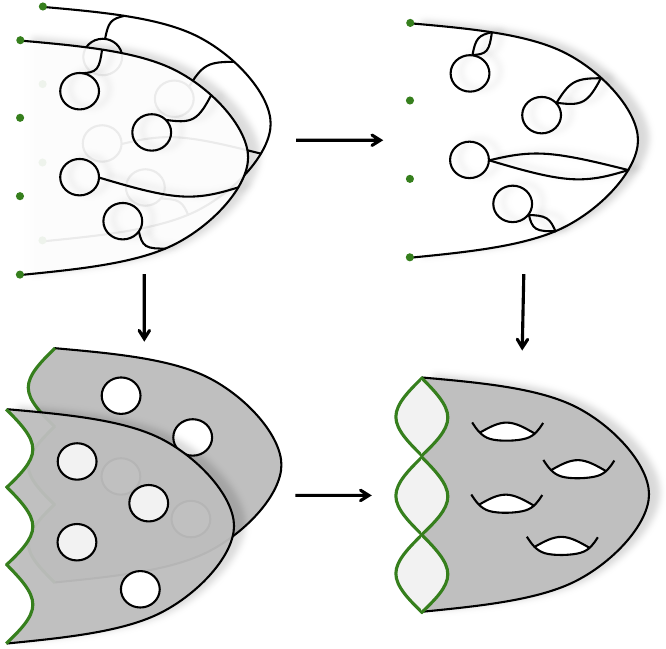}
	\caption{Gluing a punctured surface.} \label{fig:punctured_surface}
	\begin{descriptionBox}
		This figure illustrates the case of a thrice-punctured surface of genus $4$. The given diagram is a pushout square; vertical maps are injective and horizontal ones are quotient maps. Similarly to \cref{fig:pair_of_pants}, these pictures illustrate objects in $\Spc^{\Delta^1}$ where the green parts denote the first factor. For example, for $(N', N) \in \Spc^{\Delta^1}$ represented by any one of the four objects in the square, $\LocSys_{G, H}(N', N)$ is the moduli space of $G$-local system on the whole of $N$ plus a $H$-reduction on the green parts given by $N'$. 
		\end{descriptionBox}
\end{figure}

Now, $(\partial M, M) \in \Spc^{\Delta^1}$ can be sliced into two ``sheets.'' We denote the resulting object by $(\wtilde{\partial} F, F) \in \Spc^{\Delta^1}$, which is illustrated by the bottom left of \cref{fig:punctured_surface}. Here, we use $\wtilde{\partial} F$ rather than $\partial F$ to emphasize that it is \emph{not} the boundary $F$ but rather, it comes from the boundary of $M$. Note that $F$ stands for faces.

Let $(\wtilde{\partial} S, S)$ and $(\wtilde{\partial} Q, Q)$ be elements in $\Spc^{\Delta^1}$ represented by the top left and right of \cref{fig:punctured_surface} respectively, where $S$ and $Q$ stand for skeleton and quotient respectively.\footnote{Glue would have been better but we already use $G$ to denote the group $G$.} It is clear that we have a pushout diagram (which is the one illustrated by \cref{fig:punctured_surface})
\[
\begin{tikzcd}
	(\wtilde{\partial} S, S) \ar{d} \ar{r} & (\wtilde{\partial} Q, Q) \ar{d} \\
	(\wtilde{\partial} F, F) \ar{r} & (\partial M, M)
\end{tikzcd} \teq\label{eq:push_out_punctured_surface}
\]

Applying $(BH, BG)^{-, -}$ (see~\cref{subsubsec:cMap_defn}) to~\cref{eq:push_out_punctured_surface} and $BH^-$ to the second terms of~\cref{eq:push_out_punctured_surface}, we obtain the following Cartesian squares
\[
\begin{tikzcd}
	(BH, BG)^{\wtilde{\partial} S, S} & (BH, BG)^{\wtilde{\partial} Q, Q} \ar{l} \\
	(BH, BG)^{\wtilde{\partial} F, F} \ar{u} & (BH, BG)^{\partial M, M} \ar{u} \ar{l}
\end{tikzcd} \qquad
\begin{tikzcd}
	BH^{S} & BH^{Q} \ar{l} \\
	BH^{F} \ar{u} & BH^{M} \ar{u} \ar{l}
\end{tikzcd} \teq\label{eq:pullback_punctured_surface_stacks}
\]
Note that the bottom right terms of the two squares are $\LocSys_{G, H}(\partial M, M)$ and $\LocSys_H(M)$ respectively. Now, to show that the natural map $\LocSys_H(M) \to \LocSys_{G, H}(\partial M, M)$ is a closed embedding, it suffices to show that the natural map from each of the three other terms of the square on the right to the corresponding term of the square on the left is a closed embedding.

We will now prove this for each of the maps. In what follows, we will use fractions to denote stack quotients with respect to conjugation actions. Moreover, let $g$ and $k$ denote the genus of $M$ and the number of punctures, respectively. 
\begin{myenum}{--}
	\item For $F$ (bottom left of the squares in~\cref{eq:pullback_punctured_surface_stacks}), note that $(\wtilde{\partial} F, F)$ is homotopy equivalent to $(\pt, (S^1)^{\vee g})^{\sqcup 2}$ where $(S^1)^{\vee g}$ is a wedge of $g$ circles. Thus, $BH^F \simeq (\frac{H^g}{H})^2$ and moreover,
	\[
		(BH, BG)^{\wtilde{\partial} F, F} \simeq \left(BH \times_{BG} \frac{G^g}{G}\right)^2 \simeq \left(\frac{G^g}{H}\right)^2,
	\]
	which clearly receives a closed embedding from $(\frac{H^g}{H})^2$.
	
	\item For $S$ (top left of the squares in~\cref{eq:pullback_punctured_surface_stacks}), note that $(\wtilde{\partial} S, S)$ is homotopy equivalent to $((\pt^{\sqcup k-1}, \pt^{\sqcup k-1}) \sqcup (\pt\sqcup \pt, (S^1)^{\vee g}))^{\sqcup 2}$ where the second part is a wedge of $g$ circles together with $2$ marked points. Thus, $BH^{S} \simeq (BH^{k-1} \times \frac{H^g}{H})^2$ and moreover
	\[
		(BH, BG)^{\wtilde{\partial} S, S} \simeq \left(BH^{k-1} \times (BH\times BH) \times_{BG\times BG} \frac{G^g}{G}\right)^2.
	\]
	It suffices to show that the natural map
	\[
		\frac{H^g}{H} \to (BH\times BH) \times_{BG\times BG} \frac{G^g}{G}
	\]
	is a closed embedding. Note that the RHS is equivalent to
	\[
		(BH\times BH) \times_{BG \times BG} BG \times_{BG} \frac{G^g}{G} \simeq BH\times_{BG} BH \times_{BG} \frac{G^g}{G} \simeq BH\times_{BG} \frac{G^g}{H}.
	\]
	Since $\frac{H^g}{H} \to \frac{G^g}{H}$ is a closed embedding, so is $BH \times_{BG} \frac{H^g}{H} \to BH \times_{BG} \frac{G^g}{H}$. But now, the graph $\frac{H^g}{H} \to BH\times_{BG} \frac{H^g}{H}$ is a closed embedding since $BH$ is separated (in fact, even proper when $H$ is chosen to be a parabolic subgroup) over $BG$. We thus obtain that $\frac{H^g}{H} \to BH\times_{BG} \frac{G^g}{H}$ is a closed embedding, completing the case of $S$.

	\item For the case of $Q$ (top right of the squares in~\cref{eq:pullback_punctured_surface_stacks}), we obtain the desired result by arguing similarly to the case of $S$.
\end{myenum}
\end{proof}	

\begin{thm} \label{thm:Eis_non-compact}
Let $G$ be an affine algebraic group, $H$ a closed subgroup, and $M$ a non-compact topological surface. Then
\[
	\Eis_2(BH, BG)(M) \simeq \IndCoh(\LocSys_{G, H}(M, \partial M)^\wedge_{\LocSys_{H}(M)}) \xhookrightarrow{\text{f.f.}} \IndCoh(\LocSys_{G, H}(M, \partial M)),
\]
where f.f. stands for fully faithful. In particular, when $H=P$ is a parabolic subgroup of $G$, we have
\[
	\Eis_{G, P}(M) \simeq \IndCoh(\LocSys_{G, P}(M, \partial M)^\wedge_{\LocSys_P(M)}) \xhookrightarrow{\text{f.f.}} \IndCoh(\LocSys_{G, P}(M, \partial M)).
\]
\end{thm}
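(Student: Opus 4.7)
The plan is to exploit two simplifications specific to the non-compact setting: smoothness of $\LocSys_H(M)$ and the closed-embedding statement of \cref{prop:closed_embedding_non-compact_M}. Together, these collapse the $\IndCoh_0$ on the formal completion to ordinary $\IndCoh$ and then identify the result with a support-type full subcategory of $\IndCoh(\LocSys_{G,H}(M,\partial M))$.

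First I would establish smoothness of $\LocSys_H(M)$. Under the identification of \cref{subsubsec:boundary_of_manifolds}, a non-compact surface corresponds to a compact surface with non-empty boundary, and any such surface classically deformation retracts onto a 1-dimensional spine. Since $BH^M$ depends only on the homotopy type of $M$, \cref{lem:BG_cMap_smooth_CW_dim1} applied to this spine shows that $\LocSys_H(M) = BH^M$ is smooth.

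Next, I would invoke \cref{subsubsec:IndCoh_0_special_niliso}: smoothness of $\mathcal{Y} = \LocSys_H(M)$ forces $\QCoh(\mathcal{Y}) \simeq \IndCoh(\mathcal{Y})$, so the defining Cartesian square for $\IndCoh_0$ collapses to give $\IndCoh_0(\mathcal{Z}^\wedge_\mathcal{Y}) \simeq \IndCoh(\mathcal{Z}^\wedge_\mathcal{Y})$ for any $\mathcal{Z}$. Applied with $\mathcal{Z} = \LocSys_{G,H}(M,\partial M)$, this yields
\[
\Eis_2(BH,BG)(M) \simeq \IndCoh\bigl(\LocSys_{G,H}(M,\partial M)^\wedge_{\LocSys_H(M)}\bigr).
\]

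Finally, I would invoke \cref{prop:closed_embedding_non-compact_M} to produce the closed embedding $\LocSys_H(M) \hookrightarrow \LocSys_{G,H}(M,\partial M)$ and apply the standard identification of $\IndCoh$ on the formal completion along a closed embedding with the full subcategory of $\IndCoh$ on the ambient stack consisting of sheaves set-theoretically supported on the subscheme. The technical hypotheses for this last step are favorable in our setting: both $\partial M$ and $M$ retract onto CW complexes of dimension at most $1$, so by \cref{lem:BG_cMap_smooth_CW_dim1} the stack $\LocSys_{G,H}(M,\partial M) = BH^{\partial M}\times_{BG^{\partial M}} BG^{M}$ is a fiber product of smooth stacks, hence quasi-smooth, and the usual $\IndCoh$ machinery for locally almost of finite type stacks applies directly. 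The essential geometric content, and hence the main potential obstacle, is already absorbed into \cref{prop:closed_embedding_non-compact_M}; once that is in hand, the remainder is formal bookkeeping.
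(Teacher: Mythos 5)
Your proposal is correct and follows essentially the same route as the paper's own proof: smoothness of $\LocSys_H(M)$ via the $1$-dimensional spine and \cref{lem:BG_cMap_smooth_CW_dim1}, the resulting collapse $\IndCoh_0 \simeq \IndCoh$ on the formal completion from \cref{subsubsec:IndCoh_0_special_niliso}, and then \cref{prop:closed_embedding_non-compact_M} together with the standard identification of $\IndCoh$ on a formal completion along a closed embedding with the support-theoretic full subcategory. The only addition is your remark that the ambient stack is quasi-smooth, which the paper leaves implicit; this is a harmless extra justification, not a departure in strategy.
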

\begin{proof}
Since $M$ has the homotopy type of a $1$-dimensional CW complex, $BH^M = \LocSys_H(M)$ is smooth, by \cref{lem:BG_cMap_smooth_CW_dim1}. Thus,
\[
	\Eis_2(BH, BG)(M) \defeq \IndCoh_0(\LocSys_{G, H}(M, \partial M)^\wedge_{\LocSys_H(M)}) \simeq \IndCoh(\LocSys_{G, H}(M, \partial M)^\wedge_{\LocSys_H(M)}).
\]
By \cref{prop:closed_embedding_non-compact_M}, $\LocSys_H(M) \to \LocSys_{G, H}(M, \partial M)$ is a closed embedding. Thus, we see that $\IndCoh(\LocSys_{G, H}(M, \partial M)^\wedge_{\LocSys_H(M)})$ is the full subcategory of $\IndCoh(\LocSys_{G, H}(M, \partial M))$ consisting precisely of ind-coherent sheaves whose set-theoretic support is $\LocSys_H(M)$.
\end{proof}

\subsubsection{An example}
We now consider the extreme case where $H = \{1\} \subset G$ is the trivial subgroup of $G$. Recall that
\[
	\LocSys_{G, \{1\}}(D^2, S^1) \simeq \pt \times_{\frac{G}{G}} BG \simeq \pt \times_{G} \pt \simeq \mathfrak{g}[-1] \simeq \Spec \Sym(\mathfrak{g}^*[1]).	
\]
Thus, $\LocSys_{G, \{1\}}(D^2, S^1)^{\wedge}_{\triv} \simeq \LocSys_{G, \{1\}}(D^2, S^1)$ and hence,
\[
	\Hecke_2(\pt, BG) \simeq \IndCoh(\LocSys_{G, \{1\}}(D^2, S^1)^{\wedge}_{\triv}) \simeq \IndCoh(\mathfrak{g}[-1]).
\]

Let $M$ be any non-compact surface. Then,
\begin{align*}
\int_M \IndCoh(\mathfrak{g}[-1]) 
&\simeq \Eis_2(\pt, BG)(M) \\
&\simeq \IndCoh(\LocSys_{G, \{1\}}(M, \partial M)^\wedge_{\LocSys_{\{1\}} M}) \\
&\simeq \IndCoh(\LocSys_{G, \{1\}}(M, \partial M)^\wedge_{\triv}),	
\end{align*}
where $\triv$ denotes the trivial local system. This category is the full subcategory of
\[
	\IndCoh(\LocSys_{G, \{1\}}(M, \partial M))
\]
consisting of all ind-coherent sheaves supported at the trivial local system. 

\section*{Acknowledgments}
The authors thank D. Beraldo, J. Campbell, L. Chen, G. Dhillon, J. Francis, D. Gaitsgory, D. Nadler, P. Shan, G. Stefanich, and P. Yoo for stimulating conversations and email exchanges regarding the subject matter of the paper. We thank the anonymous referee for many helpful comments and suggestions.

The paper was written when Q. Ho was a postdoc in Hausel group at IST Austria, supported by the Lise Meitner fellowship, Austrian Science Fund (FWF): M 2751. P. Li is partially supported by the National Natural Science Foundation of China (Grant No. 12101348).

\bibliography{eis_tqft}
\end{document}